\newtheorem{theorem}{\textbf{Theorem}}[section]
\newtheorem{lemma}[theorem]{\textbf{Lemma}}
\newtheorem{remark}[theorem]{\textbf{Remark}}
\theoremstyle{definition}
\newtheorem{definition}[theorem]{\textbf{Definition}}
\theoremstyle{remark}
\numberwithin{equation}{section}
\font\bigbf=cmbx10 at 16pt
\def\ds{\displaystyle}
\def\L{{\bf L}}
\def\ve{\varepsilon}
\def\R{\mathbb{R}}
\def\N{\mathbb{N}}
\def\vs{\vskip 2em}
\def\vsk{\vskip 4em}
\def\bega{\begin{array}}
\def\enda{\end{array}}
\def\begi{\begin{itemize}}
\def\endi{\end{itemize}}
\def\bel{\begin{equation}\label}
\def\eeq{\end{equation}}
\def\sqr#1#2{\vbox{\hrule height .#2pt
\hbox{\vrule width .#2pt height #1pt \kern #1pt
\vrule width .#2pt}\hrule height .#2pt }}
\begin{document}

\title{\bigbf 
On Kolmogorov entropy compactness estimates for scalar conservation laws without uniform convexity}

\vsk

\author{
Fabio Ancona\footnote{
Dipartimento di Matematica  "Tullio Levi-Civita",
Universit\`a degli Studi di Padova,
Via Trieste 63, 35121 Padova, Italy
} ,
Olivier Glass\footnote{
Ceremade,
Universit\'e Paris-Dauphine, CNRS UMR 7534,
Place du Mar\'echal de Lattre de Tassigny, 
75775 Paris Cedex 16, France
} ,
Khai T. Nguyen\footnote{Department of Mathematics, North Carolina State University, USA
Raleigh, NC 27695, USA}\\
\noalign{\bigskip\medskip}
}

\maketitle

\begin{abstract}
In the case of 
scalar conservation laws
\begin{equation*}
u_{t} + f(u)_{x}~=~0,\qquad t\geq 0, x\in\R,
\end{equation*}
with  uniformly strictly convex flux $f$,
quantitative compactness estimates - in terms of Kolmogorov entropy in ${\bf L}^{1}_{loc}$ -
were established in~\cite{DLG,AON1} for  sets of entropy weak solutions evaluated at a fixed time $t>0$,
whose initial data have a uniformly bounded support and vary in a bounded subset of ${\bf L}^\infty$. 
These estimates reflect the irreversibility features of entropy weak discontinuous solutions of
these nonlinear equations.

We provide here an extension of such estimates  to  the case of scalar conservation laws with a smooth flux function $f$ that either is strictly (but not necessarily uniformly) convex or has a single inflection point with a polynomial degeneracy.
\end{abstract}

\vs
\section{Introduction}

\setcounter{equation}{0} Consider a scalar conservation law in one space dimension
\begin{equation} \label{EqCL}
u_{t} + f(u)_{x}~=~0,
\end{equation}
where $u=u(t,x)$ is the state variable, and $f:\mathbb{R}\to\mathbb{R}$ is a twice continuously differentiable
map.
Without loss of generality, we will suppose
\begin{equation} \label{EqZeroSpeed}
f'(0)=0,
\end{equation}

since one may always reduce the general case to this one by performing
the space-variable and flux transformations $x \to x + t f'(0)$ and $f(u) \to f(u)-u f'(0)$.
It is well known that, no matter how smooth the initial data are, solutions of
the Cauchy problem for~\eqref{EqCL}
generally stay smooth
only up to a critical time beyond which  discontinuities (shocks)
develop. Hence, it is natural to consider weak solutions in the sense of distributions
that, for sake of uniqueness, satisfy an entropy admissibility criterion~\cite{Dafermos:Book}
equivalent to the celebrated Ole\v{\i}nik E-condition~\cite{Oleinik}
which generalizes the classical stability conditions introduced by Lax~\cite{lax57}:

\textbf{Ole\v{\i}nik E-condition.} 
A shock discontinuity located at $x$ and
connecting a left state $u^L\doteq u(t,x-)$ with a right state $u^R\doteq u(t,x+)$ is 
entropy admissible if and only if there holds
\begin{equation}
\label{E-cond}
\frac{f(u^L)-f(u)}{u^L-u}~\geq~\frac{f(u^R)-f(u)}{u^R-u}
\end{equation}
for every $u$ between $u^L$ and $u^R$, where  $u(t,x\pm)$ denote the one-sided limits of $u(t,\cdot)$ at $x$.

The equation~\eqref{EqCL} generates an ${\bf L}^{1}$-contractive semigroup of solutions
$(S_t)_{t \geq 0}$ that associates, to every given initial data $u_{0} \in {\bf L}^{1}(\R) \cap {\bf L}^{\infty}(\R)$,
the unique entropy admissible weak solution $S_t u_0\doteq u(t,\cdot)$ of the corresponding Cauchy problem
(cfr.~\cite{Dafermos:Book, Kruzkov}). This yields the existence of a continuous semigroup $(S_t)_{t \geq 0}$ 
acting on the whole space ${\bf L}^{1}(\R)$.
%
%
Under the assumption that the flux function $f$ is uniformly strictly convex,  
it was shown by Lax~\cite{Lax54} that such a semigroup $S_t$  is
compact as a mapping from ${\bf L}^{1}(\R)$ to ${\bf L}^{1}_{loc}(\R)$,
for every $t>0$.
Indeed, in this case entropy admissible weak solutions satisfy the one-side  Ole\v{\i}nik
inequality~\cite{Oleinik} which yields 
uniform BV-bounds on the solutions at any fixed time $t>0$, which in turn,
applying
Helly's compactness theorem, imply the compactness of the mapping $S_t$.
This property reflects the irreversibility features of entropy weak (discontinuous) solutions
of these equations. De Lellis and Golse~ \cite{DLG}, following a
suggestion by Lax~\cite{Lax78,Lax02}, used the concept of Kolmogorov $\varepsilon$-entropy, recalled below, 
to provide a quantitative estimate of this compactness effect.
\begin{definition}\label{DefKE}
Let $(X,d)$ be a metric space and $K$ a totally bounded subset of $X$. For $\varepsilon>0$, let $\mathcal{N}_{\varepsilon}(K)$ 
be the minimal number of sets in a cover of $K$ by subsets of $X$ having diameter no larger than $2\varepsilon$. Then the $\varepsilon$-entropy of $K$ is defined as
\begin{equation} \nonumber
\mathcal{H}_{\varepsilon}(K \ | \ X) ~\doteq~ \log_{2} \mathcal{N}_{\varepsilon}(K).
\end{equation}
\end{definition}
\noindent
Throughout the paper, we will call an {\it $\varepsilon$-cover}, a cover of $K$ by subsets of $X$ having diameter no larger than $2\varepsilon$. 
\par

In the case of   uniformly strictly convex conservation laws,
De Lellis and Golse established in~\cite{DLG} an upper bound on
the Kolmogorov  {\it $\varepsilon$-entropy}  of the image
set $S_t(\mathcal{C})$  for bounded subsets $\mathcal{C}$ of ${\bf  L}^1$
of order~$1/\varepsilon$. In~\cite{AON1}, we have supplemented  the upper estimate established in~\cite{DLG} with 
a lower bound on the  $\varepsilon$-entropy of~$S_t(\mathcal{C})$ 
of the same order $1/\varepsilon$,
thus showing that the estimate of De~Lellis and Golse was optimal.
Entropy numbers play a central role in various areas of information theory and statistics
as well as of learning theory. In the present setting, this concept  could 
provide a measure of the order of ``resolution'' and of the ``complexity'' of a numerical method for~\eqref{EqCL}, as suggested in~\cite{Lax78,Lax02}.
\par

Aim of this paper is to extend this type of quantitative estimates on the compactness property
of the mapping $S_t$, $t>0$ to the case of conservation laws~\eqref{EqCL}
with a flux function that either is strictly (but non necessarily uniformly) convex or has a single inflection point
and  satisfies some  non-flatness conditions.
Notice that, when one removes the assumption of uniform convexity of the flux function,
entropy weak solutions do not satisfy anymore the classical Ole\v{\i}nik inequality 
and they may have 
unbounded variation (see~\cite{KSC83}).  
However, it was shown in~\cite{KSC86, G08} that for such equations the positive variation of the derivative of the flux
composed with a bounded solution is uniformly bounded at any positive time, hence it belongs to the BV space.
Exploiting this property in the case of a conservation law with a single inflection point,
and invoking~\cite[Theorem~1]{BKP},
given a bounded subsets $\mathcal{C}$ of ${\bf  L}^1$
we first consider  an $\varepsilon'$-covering $\mathcal{U}'$ of 
the set $\mathcal{L}\doteq\big\{f'\circ u \ \big| \ u \in \mathcal{C} \big\}$
with  cardinality $\approx 2^{a/\varepsilon'}$, for some constant $a>0$.
Next, we associate to $\mathcal{U}'$ an $\varepsilon$-covering $\mathcal{U}$ of 
the set $S_t(\mathcal{C})$, with  cardinality $\approx 2^{(a+1)/\varepsilon'}$,
where $\varepsilon' = f'(\varepsilon)$.
As a consequence we find that the 
$\varepsilon$-entropy of~$S_t(\mathcal{C})$  has an upper bound of order $1/f'(\varepsilon)$.
We also show that this estimate is optimal providing a lower bound of the
same order $1/f'(\varepsilon)$
for the  $\varepsilon$-entropy of a subset of $S_t(\mathcal{C})$,
and hence for the  $\varepsilon$-entropy of~$S_t(\mathcal{C})$.
Namely, performing a similar analysis as in~\cite{AON1},
we establish such a lower bound for the  $\varepsilon$-entropy 
of~$S_t(\mathcal{C}^+\cup \mathcal{C}^-)$, where $\mathcal{C}^+$, $\mathcal{C}^-$, denote the classes of initial data
in $\mathcal{C}$ which assume only nonnegative and nonpositive values, respectively.
Notice that, 
for the
particular class of fluxes $f(u)=u^{m+1}/(m+1)$, $m$ even, we find that 
the Kolmogorov $\varepsilon$-entropy of~$S_t(\mathcal{C})$ is
of order $1/\varepsilon^m$, which shows how accurate this concept is
in reflecting the nonlinearity of the flux.
We finally prove that even in the case of  strictly, but not uniformly, convex flux
there hold the same upper and lower bounds of order $1/f'(\varepsilon)$
for the Kolmogorov $\varepsilon$-entropy of~$S_t(\mathcal{C})$.
\par

Specifically, we shall assume that the flux function satisfies one
of the {\bf standing assumptions}:
\begin{itemize}
\item[\ \  {\bf (C)}] $f:\R\to\R$ is a twice continuously differentiable, strictly convex function.

\item[\ \  {\bf (NC)}] $f:\R\to\R$ is a smooth, non convex function with
a single inflection point at zero having polynomial degeneracy, i.e. such that
\begin{equation}
\label{hyp-NC}
\begin{gathered}
f^{(j)}(0)~=~0\quad \text{for all} \quad j =2,\dots, m,\qquad f^{(m+1)}(0)~\neq~ 0\,,
\\
\noalign{\medskip}
f''(u)\cdot u \cdot \mathrm{sign} \big(f^{(m+1)}(0)\big)~>~0\qquad\forall~u\in \R\,\setminus \{0\}\,,
\end{gathered}
\end{equation}
for 
some even integer $m\in\N\setminus\{0\}$.
\end{itemize}
Notice that, generically, smooth fluxes satisfy one of the assumptions {\bf (C), (NC)}, since
a generic property of smooth maps $f:\R\to\R$ is that $f^{(3)}(x)\neq 0$ whenever $f''(x)=0$.

In connection with a flux $f:\R\to\R$ and any constant $M>0$, we introduce a map \linebreak $\Delta_{f,M}:(0,+\infty)\to\R$ measuring
the oscillation of $f'$, defined by setting
\begin{equation}
\label{delta-funct-def}
\Delta_{_{f,M}}(s)~\doteq ~s \cdot
\inf_{\substack{\\|u|,|v|\leq M\ u\cdot v~\geq~ 0\\
|v-u|\geq s}}
\bigg|\frac{f'(v)-f'(u)}{v-u}\bigg|
\qquad\forall~s>0\,.
\end{equation}
Notice that since in~\eqref{delta-funct-def} we are taking the infimum in a compact subset of $\R^2$,
if $f$ satisfies either of the assumptions  {\bf (C)} or  {\bf (NC)}, it follows that
$\Delta_{_{f,M}}(s)>0$ for all $s>0$.

We then consider  sets of bounded, compactly supported initial data of the form
\begin{equation} \label{DefCLM}
{\mathcal C}_{[L,M]}~\doteq~\Big\{ u_{0} \in {\bf L}^{\infty}(\R) 
\ \big| \ 
\mbox{Supp\,}(u_{0}) \subset [-L,L]\ , \ 
\| u_{0} \|_{{\bf L}^{\infty}} \leq M
\Big\}.
\end{equation}
%
The main results of the paper show that the Kolmogorov $\varepsilon$-entropy of $S_t(C_{[L,M]})$
with respect to the  ${\bf L}^1$-topology
is of order $\approx \varepsilon^{-m}$ for fluxes
satisfying the assumption {\bf (NC)},
and has an upper bound of order
$\approx (\Delta_{_{f,M}}(\varepsilon))^{-1}$ for fluxes satisfying the assumption {\bf (C)}. 
Precisely, we prove the following upper and lower bounds for the Kolmogorov $\varepsilon$-entropy of $S_t(C_{[L,M]})$
\begin{theorem}\label{ThmLUBC}
Let $f:\mathbb{R}\to\mathbb{R}$ be a function
satisfying~\eqref{EqZeroSpeed} and  the  assumption {\bf (C)}, and
let $\{S_t\}_{t\geqslant 0}$ be the semigroup of entropy weak solutions generated by~\eqref{EqCL}
on the domain ${\bf L}^1(\R)$. 
Then, given  $L,M,T>0$,  
for every $\varepsilon>0$ sufficiently small
the following estimates hold:
\begin{align}
\label{uest-c}
\mathcal{H}_{\varepsilon}\Big(S_T({\mathcal C}_{[L,M]}) \ | \ {\bf L}^{1}(\R)\Big) &\leq ~
\Gamma_{\!1}^+\cdot\frac{1}{\Delta_{_{f,M}}\big(\frac{\varepsilon}{\gamma_1^+}\big)}\,,
%
\\
\noalign{\medskip}
\label{lest-c}
\mathcal{H}_{\varepsilon}\Big(S_T({\mathcal C}_{[L,M]}) \ | \ {\bf L}^{1}(\R)\Big) &\geq ~
\Gamma_{\!1}^-
\cdot\frac{1}{\varepsilon\cdot
\min\bigg\{\displaystyle{\max_{z\in \big[0,\, \frac{6\varepsilon}{L}\big]}} f''(z),\,\displaystyle{\max_{z\in \big[
-\frac{6\varepsilon}{L},\, 0\big]}} f''(z)\bigg\}}\,,
\end{align}
where
%
\begin{align}
\label{G1+-def}
\Gamma_{\!1}^+&=~
c_1 \bigg(L+T+\frac{L^2}{T}\bigg),\qquad\quad \gamma_1^+~=~c_1 \Big(1+L+T\Big)\,,
%
\\
\noalign{\medskip}
\Gamma_{\!1}^-&=~\frac{c_1}{T}\,,
\end{align}
%
for some constant $c_1>0$ depending only on $f$ and $M$.
\end{theorem}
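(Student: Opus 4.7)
The plan is to prove the two bounds \eqref{uest-c} and \eqref{lest-c} by independent arguments, following the general scheme the authors outline for \textbf{(NC)} in the introduction, with the strictly convex case treated via the same $f'\circ u$ passage. The upper bound goes via BV-compactness of $\{f'\circ S_T u_0\}$ combined with a transfer mechanism from $f'\circ u$ to $u$; the lower bound is built on an explicit construction of $\varepsilon$-separated solutions in $\mathcal{C}_{[L,M]}$, scaled to the near-zero range where $f''$ is relevant.

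\textbf{Upper bound.} First I would invoke the generalized one-sided Oleinik/Hoff inequality available for any strictly convex (not necessarily uniformly convex) flux: for $u_0 \in \mathcal{C}_{[L,M]}$, the function $f'(u(T,\cdot))$ has positive variation controlled by $(\text{length of support})/T$. Combined with the finite propagation speed bound on $\mathrm{supp}(u(T,\cdot)) \subset [-L-CT,L+CT]$ where $C = \max_{|u|\leq M} |f'(u)|$, and with $\|f'(u(T,\cdot))\|_\infty \leq C$, this yields a uniform BV-, $L^\infty$- and support-bound on the family $\mathcal{L} = \{f'\circ S_T u_0 : u_0 \in \mathcal{C}_{[L,M]}\}$, whose constants scale like $L + T + L^2/T$. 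Applying \cite[Theorem~1]{BKP}, I obtain, for every $\varepsilon' > 0$, a covering $\mathcal{U}'$ of $\mathcal{L}$ in $L^1(\R)$ with $\mathrm{card}(\mathcal{U}') \leq 2^{a/\varepsilon'}$, the constant $a$ depending only on these bounds.

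Next comes the transfer step. Given two solutions $u_1, u_2$ at time $T$ with $\|f'(u_1)-f'(u_2)\|_{L^1}\leq 2\varepsilon'$, I would split $\R$ into the same-sign set $\{u_1 u_2 \geq 0\}$ and the cross-sign set $\{u_1 u_2 < 0\}$. On the first, the definition of $\Delta_{f,M}$ gives directly that the level set $A_s \doteq \{|u_1-u_2|\geq s\}$ satisfies $|A_s| \leq 2\varepsilon'/\Delta_{f,M}(s)$; on the second, since $f'(0)=0$ one has $|f'(u_1)-f'(u_2)| = |f'(u_1)| + |f'(u_2)|$ and a pointwise estimate of the same form follows. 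Decomposing
\begin{equation*}
\|u_1-u_2\|_{L^1} \;\leq\; s\cdot \mathrm{length}(\mathrm{supp}) \;+\; 2M\,|A_s|,
\end{equation*}
and choosing $s$ of order $\varepsilon/(L+T)$ together with $\varepsilon' = \Delta_{f,M}(s)/\mathrm{const}$, I balance the two terms to get $\|u_1-u_2\|_{L^1}\leq 2\varepsilon$, which converts $\mathcal{U}'$ into an $\varepsilon$-cover of $S_T(\mathcal{C}_{[L,M]})$ of cardinality $\leq 2^{\Gamma_1^+/\Delta_{f,M}(\varepsilon/\gamma_1^+)}$, as in \eqref{uest-c}.

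\textbf{Lower bound.} Mirroring the construction of \cite{AON1}, I would build a family of initial data in $\mathcal{C}_{[L,M]}$ indexed by binary words of length $N$, whose $S_T$-images are pairwise $\varepsilon$-separated in $L^1$. The amplitudes of these perturbations would be chosen of order $\varepsilon/L$ and concentrated either on $[0,6\varepsilon/L]$ or on $[-6\varepsilon/L,0]$, so that the speed of shock formation and cancellation is driven by $\max f''$ on the corresponding tiny interval, rather than by a uniform convexity constant. Counting how many binary words can coexist without $L^1$-collapse at time $T$ yields $N\gtrsim 1/\bigl(T\cdot \varepsilon\cdot \min\{\max_{[0,6\varepsilon/L]} f'',\ \max_{[-6\varepsilon/L,0]} f''\}\bigr)$, which furnishes \eqref{lest-c} with the stated constant $\Gamma_1^- = c_1/T$.

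\textbf{Main obstacle.} The delicate step is the transfer from a cover of $\mathcal{L}$ at scale $\varepsilon'$ to a cover of $S_T(\mathcal{C}_{[L,M]})$ at scale $\varepsilon$. Since $f'$ can have an arbitrarily bad inverse modulus of continuity near $0$ under hypothesis \textbf{(C)}, a naive application of $(f')^{-1}$ loses all control; it is precisely the quantity $\Delta_{f,M}$, together with the separation of same-sign and cross-sign regions, that provides a uniform, pointwise-to-$L^1$ quantitative substitute. Tracking the constants through the optimization in $s$ and $\varepsilon'$ to recover the precise $L,T,L^2/T$ dependence of $\Gamma_1^+,\gamma_1^+$ is the main bookkeeping burden; the lower bound, by contrast, is conceptually the same as in \cite{AON1}, only rescaled to the near-zero window.
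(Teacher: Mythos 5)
Your overall architecture coincides with the paper's: a BKP cover of $\mathcal{L}=\{f'\circ S_Tu_0\}$ obtained from the one-sided Ole\v{\i}nik-type bound and the support/$L^\infty$ estimates, a transfer step back to $S_T(\mathcal{C}_{[L,M]})$ governed by $\Delta_{f,M}$, and a lower bound built from one-sided-Lipschitz classes supported at amplitude $h=6\varepsilon/L$ in the spirit of \cite{AON1}. However, your transfer step contains a genuine quantitative gap.

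\textbf{The gap.} You decompose $\|u_1-u_2\|_{{\bf L}^1}\leq s\cdot\mathrm{length}(\mathrm{supp})+2M\,|A_s|$ with $|A_s|\leq 2\varepsilon'/\Delta_{f,M}(s)$, and then claim that choosing $s\sim\varepsilon/(L+T)$ and $\varepsilon'=\Delta_{f,M}(s)/\mathrm{const}$ balances the two terms to give $\|u_1-u_2\|_{{\bf L}^1}\leq 2\varepsilon$. It does not: with that choice the second term is $2M\cdot 2\varepsilon'/\Delta_{f,M}(s)=4M/\mathrm{const}$, a fixed constant independent of $\varepsilon$, so the estimate never becomes small. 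To force $2M|A_s|\leq\varepsilon$ you would need $\varepsilon'\lesssim\varepsilon\,\Delta_{f,M}(s)/M$, and the resulting cover has cardinality $2^{a/\varepsilon'}\approx 2^{CM/(\varepsilon\,\Delta_{f,M}(\varepsilon/\gamma))}$, which carries an extra factor $1/\varepsilon$ and is strictly weaker than the claimed bound $\Gamma_1^+/\Delta_{f,M}(\varepsilon/\gamma_1^+)$ (the discrepancy cannot be absorbed into $\gamma_1^+$, e.g.\ it survives already for uniformly convex $f$). The paper avoids this loss by replacing the crude bound $|u_1-u_2|\leq 2M$ on the exceptional set with the weighted pointwise estimate of Lemma~\ref{inv-der-f-lip-est}: setting $\rho=\widehat\Delta^{-1}_{f,M}(\|f'\circ u_1-f'\circ u_2\|_{{\bf L}^1})$, the monotonicity of $s\mapsto\widehat\Delta_{f,M}(s)/s$ gives, wherever $|u_1-u_2|\geq\rho$,
\begin{equation*}
|u_1(x)-u_2(x)|~\leq~\frac{\rho}{\widehat\Delta_{f,M}(\rho)}\,\big|f'(u_1(x))-f'(u_2(x))\big|\,,
\end{equation*}
and integrating this (the right-hand side integrates to exactly $\rho$) yields $\|u_1-u_2\|_{{\bf L}^1}\leq(1+2L)\rho$ with no loss of a factor $\varepsilon$. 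You should replace your level-set bound by this weighted estimate; your treatment of the cross-sign set (comparing $\widehat\Delta_{f,M}$ with $\Delta_{f,M}$, as in Lemma~\ref{delta-maps-prop}) is otherwise in the right spirit.

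\textbf{On the lower bound.} Your sketch is conceptually aligned with the paper, but it glosses over the attainability issue: constructing initial data whose images at time $T$ remain $\varepsilon$-separated is delicate because $S_T$ is an ${\bf L}^1$-contraction and shocks destroy separation. The paper instead constructs the separated family directly \emph{at time $T$} inside the one-sided-Lipschitz classes $\mathcal{A}^\pm_{[L,h]}$ and proves these are contained in $S_T(\mathcal{C}_{[L,M]})$ by a backward-in-time (reversibility) argument, which requires showing (Lemma~\ref{reg}) that solutions of the reversed problem stay continuous. If you pursue the forward construction you must supply an argument for why the separation persists; otherwise you should incorporate the controllability step.
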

\begin{remark}
\rm
In the case where 
the derivative $f'$ of a  strictly convex flux $f$ is a convex function
on~$[0,+\infty)$ and a concave function on $(-\infty, 0]$,
and we assume that~\eqref{EqZeroSpeed} holds,
by definition~\eqref{delta-funct-def} it follows that 
\begin{equation*}
\Delta_{_{f,M}}(s)~=~\min\big\{|f'(-s)|,\, |f'(s)|\big\}\qquad\forall~s>0\,,
\end{equation*}
for every $M>0$, while
\begin{equation*}
\min\Big\{\displaystyle{\max_{z\in [0, s]}} f''(z),\,\displaystyle{\max_{z\in [-s, 0]}} f''(z)\Big\}~=~
\min\big\{f''(-s),\, f''(s)\big\}\qquad\forall~s>0\,.
\end{equation*}
Therefore, in this case, by~\eqref{uest-c}-\eqref{lest-c} the  ${\bf L}^1$-Kolmogorov $\varepsilon$-entropy of $S_t(C_{[L,M]})$
turns out to be of or\-der~$\approx 1/|f'(\pm\varepsilon)|\approx1/\big(\varepsilon\cdot f''(\pm\varepsilon)\big)$. Instead, if we assume
that
$f''(u)\geq c>0$ for all $u\in \R$,
applying the mean-value theorem to $f'$ it follows that
\begin{equation*}
\Delta_{_{f,M}}(s)~\geq~ c\cdot s\qquad\quad\forall~s>0\,.
\end{equation*}
On the other hand, 
for every fixed $M>0$, there exists some constant  $c_M>0$
such that
\begin{equation*}
\min\Big\{\displaystyle{\max_{z\in [0, s]}} f''(z),\,\displaystyle{\max_{z\in [-s, 0]}} f''(z)\Big\}
~\leq~ c_M
\quad\qquad\forall~s\in (0, M]\,.
\end{equation*}
%
Thus, in this second case
we recover the estimate  $\mathcal{H}_{\varepsilon} (S_T({\mathcal C}_{[L,M]}) \, | \, {\bf L}^{1}(\R))\approx 1/\varepsilon$
established in~\cite{AON1,DLG} for uniformly strictly convex fluxes.
\end{remark}
\begin{remark}
\rm
If we consider a smooth, strictly convex flux $f$ 
with a polynomial degeneracy at zero, i.e. such that
\begin{equation}
\label{hyp-C-pol}
\begin{gathered}
f^{(j)}(0)~=~0\quad \text{for all} \quad j =1,\dots, m,\qquad f^{(m+1)}(0)~\neq~0\,,
\\
\noalign{\medskip}
f''(u)
~>~0\qquad\forall~u\in \R\,\setminus \{0\}\,,
\end{gathered}
\end{equation}
for some odd integer $m\in\N$, one can show that there exist some constant $\alpha_M>0$
depending on $f, M$, and $\overline\alpha>0$ depending only on $f$, such that
\begin{equation}
\label{second-der-bound-1}
\Delta_{_{f,M}}(s)~\geq~\frac{s^m}{\alpha_M},
\qquad\quad
\min\Big\{\displaystyle{\max_{z\in [0, s]}} f''(z),\,\displaystyle{\max_{z\in [-s, 0]}} f''(z)\Big\}
~\leq~\overline\alpha \cdot s^{m-1}\,,
\end{equation}
for all $s>0$ sufficiently small
(see Remark~\ref{pol-c} and Lemma~\ref{up-bounds-second-der-f}).
Hence, for fluxes satisfying the assumption~\eqref{hyp-C-pol}, 
by~\eqref{uest-c}-\eqref{lest-c} the  ${\bf L}^1$-Kolmogorov $\varepsilon$-entropy of $S_t(C_{[L,M]})$
turns out to be of order~$\approx 1/\varepsilon^m$.
\end{remark}
\begin{theorem}\label{ThmLUBNC}
Let $f:\mathbb{R}\to\mathbb{R}$ be a function
satisfying~\eqref{EqZeroSpeed} and  the  assumption {\bf(NC)}.
Then, in the same setting of Theorem~\ref{ThmLUBC},
for any given $L,M,T>0$, and for every $\varepsilon>0$ sufficiently small,
the following estimates hold:
\begin{align}
\label{uest-nc}
\mathcal{H}_{\varepsilon}\Big(S_T({\mathcal C}_{[L,M]}) \ | \ {\bf L}^{1}(\R)\Big) &\leq 
\Gamma_{\!2}^+\cdot\frac{1}{\varepsilon^m}\,,
%
\\
\noalign{\medskip}
\label{lest-nc}
\mathcal{H}_{\varepsilon}\Big(S_T({\mathcal C}_{[L,M]}) \ | \ {\bf L}^{1}(\R)\Big) &\geq ~
\Gamma_{\!2}^-\cdot\frac{1}{\varepsilon^m}\,,
\end{align}
where
\begin{equation}
\begin{aligned}
\Gamma_{\!2}^+&=
c_2 \bigg(1+L+T+\frac{L^2}{T}\bigg)^{m+1}
\\
\noalign{\medskip}
\Gamma_{\!2}^-&=~c_2\cdot\frac{L^{m+1}}{T}\,,
\end{aligned}
\end{equation}
for some constant $c_2>0$ depending only on $f$ and $M$.
\end{theorem}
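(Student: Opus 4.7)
The plan is to implement the two-step strategy outlined just above the statement of Theorem~\ref{ThmLUBNC}. For the upper bound~\eqref{uest-nc}, I would first show that for every $u_0\in\mathcal{C}_{[L,M]}$ the image $f'\circ S_T u_0$ lies in a common BV-ball, compactly supported in a common interval. Finite speed of propagation yields $\mathrm{supp}(S_T u_0)\subset[-L-T\max_{[-M,M]}|f'|,\,L+T\max_{[-M,M]}|f'|]$, while the Kruzhkov--Sinestrari--Golse estimate of~\cite{KSC86,G08} controls the positive variation of $f'(u(T,\cdot))$ by a quantity of order $(L+T)/T$, hence its total variation by the same order. Applying the covering theorem for BV-balls from~\cite{BKP} (Theorem~1) to $\mathcal{L}_T:=\{f'\circ S_T u_0 : u_0\in\mathcal{C}_{[L,M]}\}$ then produces, for every $\varepsilon'>0$, an $\varepsilon'$-cover $\mathcal{U}'$ of $\mathcal{L}_T$ in ${\bf L}^1$ with cardinality at most $2^{a/\varepsilon'}$, where $a$ depends polynomially on $L$, $T$ and $L^2/T$.

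The second step is the inversion, which I expect to be the main obstacle: turn $\mathcal{U}'$ into an $\varepsilon$-cover of $S_T(\mathcal{C}_{[L,M]})$ under the calibration $\varepsilon'=f'(\varepsilon)$. Under~\textbf{(NC)} with $m$ even, $f'$ vanishes only at $0$, is strictly monotone on each of the half-lines $(-\infty,0]$ and $[0,+\infty)$, and satisfies $f'(u)\asymp|u|^m$ near $0$; so its inverse restricted to a sign-branch is H\"older of exponent $1/m$. If $u_1,u_2$ agree in sign a.e., are bounded by $M$, and satisfy $\|f'(u_1)-f'(u_2)\|_{{\bf L}^1}\leq 2\varepsilon'$, then a H\"older-type inequality on the common support of length $O(L+T)$ gives $\|u_1-u_2\|_{{\bf L}^1}\leq C\varepsilon$. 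The non-injectivity of $f'$ across the inflection point is handled by attaching to each $\phi\in\mathcal{U}'$ a sign pattern encoded on a grid of $O(1/\varepsilon')$ cells; this multiplies the cardinality by at most $2^{O(1/\varepsilon')}$ and produces an $\varepsilon$-cover of $S_T(\mathcal{C}_{[L,M]})$ of cardinality at most $2^{(a+1)/\varepsilon'}$. Rewriting $1/\varepsilon'=1/f'(\varepsilon)\asymp\varepsilon^{-m}$ yields~\eqref{uest-nc} with the advertised constant $\Gamma_2^+$.

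For the lower bound~\eqref{lest-nc} I would adapt the construction from~\cite{AON1} to the degenerate regime near zero. Since $f(u)\approx c\,u^{m+1}/(m+1)$ near the inflection point, differences of amplitude of order $\varepsilon$ generate characteristic speeds differing by $O(\varepsilon^m)$ and, over the time interval $[0,T]$, wave positions displaced by $O(T\varepsilon^m)$. Inside $[-L,L]$ I would then concatenate suitably chosen building blocks of one sign (small-amplitude sawtooth or paired rarefaction-shock profiles of amplitude $\asymp\varepsilon$), indexed by binary strings $\alpha\in\{0,1\}^N$ with $N\asymp L^{m+1}/(T\varepsilon^m)$, in such a way that distinct $\alpha$ produce solutions $S_T u_0^{\alpha}$ pairwise separated in ${\bf L}^1$ by at least $3\varepsilon$. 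A Gilbert--Varshamov packing argument then extracts an exponentially large pairwise-separated subfamily, yielding the lower bound with coefficient $\Gamma_2^-\asymp L^{m+1}/T$.

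Calibrating all of these scalings simultaneously is delicate: the H\"older exponent $1/m$ in the inversion forces the exact choice $\varepsilon'=f'(\varepsilon)$; the sign-pattern refinement must be fine enough to validate the sign-branch hypothesis pointwise a.e.\ yet coarse enough to keep the sign cost $2^{O(1/\varepsilon')}$ strictly subordinate to the leading BV-covering cost $2^{a/\varepsilon'}$; and the amplitude/separation choices in the lower-bound construction must be tuned so that waves from neighboring blocks do not interact before time $T$ while still producing exactly $\varepsilon$-sized ${\bf L}^1$ discrepancies. These are the points where the sharp $\varepsilon^{-m}$ rate could easily be lost and thus deserve the most care.
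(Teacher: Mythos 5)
Your upper-bound plan reproduces the paper's high-level strategy (BV bound on $f'\circ S_Tu_0$ via~\cite{KSC86,G08}, the covering theorem of~\cite{BKP}, then inversion of $f'$ with a sign pattern on a grid of $O(1/\varepsilon')$ cells), but it is missing the one ingredient that makes the sign-pattern device actually work, and the sentence where you say the refinement ``must be fine enough to validate the sign-branch hypothesis pointwise a.e.''\ points at the gap without filling it. No grid refinement can force an entropy solution $u=S_Tu_0$ to have constant sign on each cell, and making the grid finer than $O(1/\varepsilon')$ cells would blow the $2^{O(1/\varepsilon')}$ sign-cost past the BV-covering budget. What is really needed is a quantitative statement that sign changes of $u$ \emph{inside} a cell are harmless: if $u$ changes sign between $x$ and $y$, then $TV\{f'\circ u\,|\,[x,y]\}\geq \widetilde\kappa_M\max\{f'(u(x)),f'(u(y))\}$. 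This is the paper's Lemma~\ref{k-est}; its proof uses the Ole\v{\i}nik E-condition (an admissible shock crossing the inflection point must dissipate a fixed fraction of $f'(u^-)$, via the estimates~\eqref{der-f-est-1}--\eqref{der-f-est-2}) and is then fed into Lemma~\ref{nc-flux-prop-3} to bound $\|u-\mathcal{P}^N(u)\|_{{\bf L}^1}$ by $C\,\Delta_{f,M}^{-1}(1/N)$, where $\mathcal{P}^N(u)$ is the sign-constant projection. Only after this reduction does your H\"older-type inversion on a common sign branch (the paper's Lemma~\ref{nc-flux-prop-2}) apply, comparing $\mathcal{P}^N(u)$ with the branch-inverted cover element. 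Without this step the implication ``$\|f'(u_1)-f'(u_2)\|_{{\bf L}^1}$ small $\Rightarrow$ $\|u_1-u_2\|_{{\bf L}^1}$ small'' is simply false for functions of varying sign, since $f'(u)=f'(\pi(u))$ for the conjugate point $\pi(u)$ of opposite sign. This is the heart of the (NC) case and must be supplied.

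For the lower bound your route also diverges from the paper's and is left at the level where the hard part is asserted rather than proved. The paper does not evolve a coded family of initial data forward and check separation at time $T$ (which would require controlling wave interactions among the blocks); instead it proves a controllability result (Lemmas~\ref{reg} and~\ref{inc2}): every one-signed BV profile with a suitable one-sided derivative bound $b_h^{\pm}\approx 1/(T\max_{[0,\pm h]}|f''|)$ is exactly attained as $S_Tu_0$, by running the equation backward in time and checking that the backward solution stays continuous. The combinatorial packing is then applied to that attainable set via the already-established \cite[Proposition~2.2]{AON1} (Lemma~\ref{lowb}), and the choice $h=6\varepsilon/L$ together with $\max_{[0,s]}|f''|\lesssim s^{m-1}$ (Lemma~\ref{up-bounds-second-der-f}) produces $\Gamma_2^-\asymp L^{m+1}/T$. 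Your forward construction might be salvageable, but as written the claim that distinct codewords give $3\varepsilon$-separated solutions at time $T$ is exactly what needs proof, and your simultaneous appeal to a Gilbert--Varshamov extraction suggests you have not decided whether separation holds for all pairs or only for pairs of large Hamming distance.
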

\begin{remark}
\rm
If a flux $f$ satisfies the assumption {\bf(NC)} and~\eqref{EqZeroSpeed},
one can show
that, 
for every fixed $M>0$, there exist some constant $\beta_M>0$
such that
\begin{equation}
\frac{s^m}{\beta_M}~\leq~\Delta_{_{f,M}}(s)
~\leq~\beta_M\cdot {s^m}
\end{equation}
for all $s>0$ sufficiently small (see Lemma~\ref{nc-flux-prop-1}). Hence, the estimates on the  Kolmogorov $\varepsilon$-entropy of $S_t(C_{[L,M]})$
provided by Theorem~\ref{ThmLUBNC} are of the same order as the ones stated in Theorem~\ref{ThmLUBC}.
\end{remark}
We observe that, for fluxes having one inflection point where all derivatives vanishes, 
the composition of the derivative of the flux with the solution of~\eqref{EqCL} fails in general to belong to the BV space
(see~\cite{Marc} and Remark~\ref{rem:not-pd} here). 
However, for {\it weakly genuinely nonlinear} fluxes, that is to say for fluxes with no affine parts, it is shown in~\cite[Theorem 26]{Ta}
that equibounded sets of entropy solutions of~\eqref{EqCL} are still relatively compact in ${\bf L}^1$
(see also~\cite{Marc}).
Therefore, for fluxes of such class that do not fulfill the assumption {\bf(NC)},
it remains an open problem to provide quantitative compactness estimates on the solutions set of~\eqref{EqCL}.
In this case, a different approach from the one developed in the
the present paper must be pursued to obtain upper bounds on the Kolmogorov $\varepsilon$-entropy of 
the solution set, perhaps exploiting the 
$\mathrm{BV}^\Phi$-bounds obtained in~\cite[Theorem 1]{Marc}, $\Phi$ being a convex function
linked to the degeneracy of the flux.

The paper is organized as follows.  In Section~\ref{sec:not} we collect notations
and preliminary results concerning the theory of scalar conservation laws and 
the estimates of the Kolmogorov \linebreak $\varepsilon$-entropy for sets of functions with
uniformly bounded variation. In Section~\ref{sec:uce} we establish the upper bounds 
on the   $\varepsilon$-entropy of 
the solution set stated in Theorems~\ref{ThmLUBC}-\ref{ThmLUBNC},
while the proof of the lower bounds is carried out in Section~\ref{sec:lce}.

\section{Notations and preliminaries}
\label{sec:not} Throughout the paper we shall denote by
\begin{itemize}
\item ${\bf L}^1(\R)$, the Lebesgue space of all (equivalence classes of)
summable functions on $\R$, equipped with the usual norm $\|\cdot\|_{\bf L^1}$;
\item ${\bf L}^{\infty}(\R)$, the space of all essentially bounded functions on $\R$, equipped with the usual norm $\|\cdot\|_{\bf L^{\infty}}$;
\item $Supp(u)$, the essential support of a function $u\in {\bf L^\infty}(\R)$;
\item $TV\{u\,|\, D\}$, the total variation of $u$ on the interval $D\subset\R$; in the case where $D=\R$ we just write $TV\{u\}$;
\item $BV(D)$, the set of functions with bounded total variation on $D$;
\item $\lfloor x \rfloor\doteq \max\big\{z\in \mathbb{Z}\, | z\leq x\big\}$, the integer part of $x$.
\end{itemize}
\begin{remark} \label{Rem:RightAndLeftLimits}
\rm
We recall~\cite{Dafermos:Book, Kruzkov} that a scalar conservation law~\eqref{EqCL} generates a 
unique ${\bf L}^1$-contractive semigroup 
$\left\{S_t:{\bf L^1}(\R)\to{\bf L}^{1}(\R)\right\}_{t\geq 0}$ 
that associates to any $u_0\in {\bf L}^1(\R)\cap{\bf L}^{\infty}(\R)$ the unique entropy solution 
\[
u(t,x)~\doteq~S_t u_0(x)\qquad\qquad x\in\R, t>0\,,
\]
of (\ref{EqCL}) with initial data $u(0,x)=u_0$.
Notice that, if the the flux function $f$ satisfies either of the assumptions {\bf (C)} or {\bf(NC)}
stated in the Introduction, although $S_t u_0$ may well have unbounded variation, it is still
true that $S_t u_0$ admits one-sided limits $S_t u_0(\pm x) $ at every point $x\in\R$.
This is the consequence of the  Lax-Ole\v{\i}nik representation formula~\cite{lax57} 
in the  {\bf (C)} case, and of the $\text{BV}^{\frac{1}{p}}$ regularity (see~\cite[Theorem 3]{Marc})
in the {\bf(NC)} case.
\end{remark}
%
%
For any $L,M>0$, consider the class of functions in~\eqref{DefCLM}
and set
\begin{equation}
\label{der-flux-bound}
f'_M~\doteq ~ \sup_{|v|\leq M}~|f'(v)|\,.
\end{equation}
The next classical result provides an upper bound on the ${\bf L}^{\infty}$-norm and on the support of $S_T u_0$
for every $u_0\in\mathcal{C}_{[L,M]}$.
\begin{lemma}
\label{L1} 
Let $f:\R\to\R$ be a differentiable map.
For any $L, M, T>0$ and $u_0\in\mathcal{C}_{[L,M]}$, there holds 
\begin{equation}
\label{linf-supp-nbounds-1}
\big\|S_T u_0\big\|_{{\bf L}^{\infty}(\R)}~\leq~M\qquad\mathrm{and}\qquad Supp(S_T u_0)~\subseteq~\big[\!- l_{[L,M,T]},\, l_{[L,M,T]}\big]
\end{equation}
with 
\begin{equation}
\label{LT-def}
l_{[L,M,T]}~\doteq~L+T\cdot f'_M\,.
\end{equation}
Moreover, if $u_0\in\mathcal{C}_{[L,M]}\cap  BV(\R)$, then one has $S_T u_0\in BV(\R)$.
\end{lemma}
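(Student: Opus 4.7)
The three assertions are standard consequences of Kruzkov's ${\bf L}^1$-theory for entropy solutions; I would establish them in this order.

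\emph{Step 1: the ${\bf L}^\infty$ bound.} For every constant $k\in\R$, Kruzkov's entropy inequality applied with the pair $(u,k)$ yields the classical estimate
\begin{equation*}
\int_{\R}\max\{S_T u_0(x) - k,\,0\}\,dx~\leq~\int_{\R}\max\{u_0(x) - k,\,0\}\,dx,
\end{equation*}
and the analogous inequality with $\min\{\cdot,\,0\}$ in place of $\max\{\cdot,\,0\}$. Taking $k=M$ in the first and $k=-M$ in the second, and using $\|u_0\|_{{\bf L}^\infty}\leq M$, I get $-M\leq S_T u_0\leq M$ almost everywhere, which is the first half of~\eqref{linf-supp-nbounds-1}.

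\emph{Step 2: the support bound.} The key tool is the finite speed of propagation for Kruzkov entropy solutions. Once Step~1 is in hand, the essential range of $S_T u_0$ is contained in $[-M,M]$, so the relevant bound on the local wave speed is $f'_M$ from~\eqref{der-flux-bound}. A standard Kruzkov cone/domain-of-dependence argument then asserts: if $u$ and $v$ are two entropy solutions of~\eqref{EqCL} with values in $[-M,M]$ whose initial data coincide on an interval $[a,b]$, then $S_T u$ and $S_T v$ coincide on $[a + T f'_M,\, b - T f'_M]$. Comparing $u_0$ with the zero initial datum on each of the half-lines $(-\infty,-L]$ and $[L,+\infty)$ (where $u_0$ vanishes and $S_T 0 \equiv 0$), I deduce that $S_T u_0 \equiv 0$ outside $[-L - T f'_M,\, L + T f'_M] = [-l_{[L,M,T]},\, l_{[L,M,T]}]$.

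\emph{Step 3: the BV bound.} Here I would use the well-known translation characterization
\begin{equation*}
TV\{u\}~=~\sup_{h>0}\,\frac{1}{h}\int_{\R}|u(x+h) - u(x)|\,dx.
\end{equation*}
Because the flux is autonomous, the semigroup commutes with spatial translations: $S_T(u_0(\cdot+h))=(S_T u_0)(\cdot+h)$. Combined with ${\bf L}^1$-contractivity, this gives
\begin{equation*}
\|(S_T u_0)(\cdot+h) - S_T u_0\|_{{\bf L}^1}~\leq~\|u_0(\cdot+h) - u_0\|_{{\bf L}^1},
\end{equation*}
so $TV\{S_T u_0\}\leq TV\{u_0\}<\infty$ whenever $u_0\in BV(\R)$.

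\emph{Main obstacle.} None of the three steps is deep, but the most delicate is the finite-propagation-speed assertion in Step~2: with $f$ merely twice continuously differentiable on all of $\R$, one has no global Lipschitz bound for $f'$, so it is essential to invoke Step~1 first. Only after the solution has been confined to the compact range $[-M,M]$ can the Kruzkov doubling-of-variables argument be run with the finite wave-speed bound $f'_M$; once this is secured, the support statement follows immediately from the vanishing of $u_0$ outside $[-L,L]$.
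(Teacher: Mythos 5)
Your proof is correct, but it follows a genuinely different route from the paper's. For the ${\bf L}^\infty$ bound the two arguments are essentially the same (the paper invokes the monotonicity/comparison property of $S_t$, you spell it out via Kruzkov's inequality against the constant solutions $k=\pm M$). For the support bound the paper approximates $u_0$ in ${\bf L}^1$ by BV data $u^\nu$ supported in $\mathrm{Supp}(u_0)$, reads off the support of $S_t u^\nu$ from the piecewise-constant front-tracking construction, and passes to the limit by ${\bf L}^1$-contractivity; you instead invoke Kruzkov's finite-speed-of-propagation (domain of dependence) estimate directly, comparing with the zero solution on the two half-lines. Your version avoids the BV approximation and the front-tracking machinery altogether, and you correctly identify the one real subtlety: since $f$ is only $C^2$ with no global Lipschitz bound on $f'$, the ${\bf L}^\infty$ confinement to $[-M,M]$ must come first so that the wave speed is controlled by $f'_M$ --- the paper's front-tracking route needs the same ordering for the same reason. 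For the BV persistence the paper simply cites the a priori total-variation bounds of \cite[Theorem 6.1]{Bressan}, whereas your translation-invariance-plus-contraction argument is self-contained and even yields the quantitative estimate $TV\{S_T u_0\}\leq TV\{u_0\}$, which is more than the lemma asks for. Both proofs are sound; yours is the more elementary and self-contained of the two, relying only on the abstract properties of the Kruzkov semigroup rather than on the front-tracking construction.
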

\begin{proof} The monotonicity of the solution operator $S_t$ yields~\cite{Dafermos:Book, Kruzkov}:
\begin{equation}
\label{linf-bound1}
\big\|S_t u_0\big\|_{{\bf L}^{\infty}(\R)}~\leq~\big\|u_0\big\|_{{\bf L}^{\infty}(\R)}~\leq~M\qquad\forall~t\geq 0\,.
\end{equation}
Next observe that,  for any $u_0\in\mathcal{C}_{[L,M]}$, we can find a sequence $\{u^\nu\}_\nu$,
$u^\nu\in BV(\R)$
with $Supp(u^\nu) \subset Supp(u_0)$, such that $u^\nu\to u_0$ in ${\bf L}^1$.
This, in turn, 
implies that 
\begin{equation}
\label{conv-ft-appr}
S_t u^\nu\quad\overset{{\bf L}^1}\longrightarrow\quad S_t u_0
\qquad\forall~\nu\,,\ \forall~t>0\,.
\end{equation}
Moreover, recalling that $S_t u^\nu$ can be obtained as limit of piecewise constant front tracking
approximations~\cite[Chapter 6]{Bressan}, we deduce that
\bel{Spt-t}
Supp (S_t u^\nu)~\subseteq~\big[\!- l_{[L,M,t]},\, l_{[L,M,t]}\big]\qquad \mathrm{with}\qquad  l_{[L,M,t]}~\doteq~L+t\cdot f'_M
\qquad\forall~\nu\,,\ \forall~t>0\,,
\eeq
with $f'_M$ as in~\eqref{der-flux-bound}.
Thus, \eqref{linf-bound1}-\eqref{Spt-t}
together yield~\eqref{linf-supp-nbounds-1}. The a-priori bounds on the total variation of the solution guarantee also
that $S_T u_0\in BV(\R)$ whenever $u_0\in BV(\R)$ (see~\cite[Theorem 6.1]{Bressan}.
\end{proof}
We next collect the uniform upper bounds on the total variation of the flux of an entropy weak solutions
established in~\cite[Theorem 3.4, Theorem 4.9]{KSC86} (see also~\cite{dafermos:gc}, \cite[Section 11.2]{Dafermos:Book},
 \cite[Theorem 2]{Marc}). 
%
%
\begin{lemma}
\label{L2} 
Assume that $f:\R\to\R$ is a  function satisfying 
either of the {\bf (C)} or {\bf (NC)} conditions.
Then, for any $L,M,T>0$ and for every $u_0\in\mathcal{C}_{[L,M]}$, there holds
\bel{TV-cv-ncv1}
TV\big\{f'\circ S_Tu_0~|~\R\,\big\}~\leq~C_1\,\bigg(1+\frac{L}{T}\bigg),
\eeq
for some constant $C_1>0$ depending only on $f$ and $M$.
\end{lemma}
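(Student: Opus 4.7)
The statement is a direct consequence of results from the cited literature combined with an approximation argument. The plan consists of two steps followed by a straightforward computation.

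First, I would reduce to BV initial data. Given $u_0\in\mathcal{C}_{[L,M]}$, approximate it in ${\bf L}^1$ by a sequence $u^\nu\in\mathcal{C}_{[L,M]}\cap BV(\R)$, exactly as in the proof of Lemma~\ref{L1}. By ${\bf L}^1$-contractivity of $S_T$ and uniform continuity of $f'$ on $[-M,M]$ (using also the uniform ${\bf L}^\infty$ bound in~\eqref{linf-supp-nbounds-1}), the compositions $f'\circ S_T u^\nu$ converge to $f'\circ S_T u_0$ in ${\bf L}^1$. Since the total variation is lower semicontinuous on ${\bf L}^1$, it then suffices to establish~\eqref{TV-cv-ncv1} uniformly for $u_0\in\mathcal{C}_{[L,M]}\cap BV(\R)$.

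For such BV data, I would invoke~\cite[Theorem~3.4, Theorem~4.9]{KSC86} (see also~\cite{dafermos:gc}, \cite[Section~11.2]{Dafermos:Book} and~\cite[Theorem~2]{Marc}) to obtain a one-sided Lipschitz-type estimate of the form
\begin{equation*}
v(y+) - v(x-) ~\leq~ C_0 \cdot \frac{y-x}{T} \qquad \forall~ x < y,
\end{equation*}
where $v(x) := f'(S_T u_0(x))$ and $C_0>0$ depends only on $f$ and $M$. In the {\bf(C)} case this is the classical generalized Oleinik inequality; in the {\bf(NC)} case it relies on a more delicate analysis exploiting the polynomial degeneracy condition~\eqref{hyp-NC}. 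To conclude, Lemma~\ref{L1} gives $S_T u_0$ supported in $[-l_{[L,M,T]}, l_{[L,M,T]}]$ with $l_{[L,M,T]} = L + T f'_M$; since $f'(0)=0$, the function $v$ vanishes outside this interval. The one-sided Lipschitz estimate bounds the positive variation of $v$ over $[-l_{[L,M,T]}, l_{[L,M,T]}]$ by $2 C_0\, l_{[L,M,T]}/T$; since $v$ vanishes at both endpoints, the negative variation equals the positive variation, and hence
\begin{equation*}
TV\{v~|~\R\} ~\leq~ \frac{4C_0\, l_{[L,M,T]}}{T} ~=~ 4C_0 f'_M + \frac{4 C_0 L}{T} ~\leq~ C_1\Big(1+\frac{L}{T}\Big),
\end{equation*}
for the choice $C_1 := 4C_0 \max\{f'_M, 1\}$.

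The main obstacle is obtaining the one-sided Lipschitz bound in the {\bf(NC)} case: near the inflection point $u=0$ the characteristic speed $f'(u)$ is not monotone in $u$, so the classical Oleinik convexity argument does not apply. The cited results of~\cite{KSC86} circumvent this through a careful analysis of wave interactions crossing the inflection point, using the polynomial degeneracy to quantify the regularizing effect on $f'\circ u$ rather than on $u$ itself; we invoke this deeper result as a black box.
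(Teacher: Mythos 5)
Your treatment of the convex case is essentially the paper's argument: the one-sided Lipschitz bound $f'(S_Tu_0(y))-f'(S_Tu_0(x))\leq (y-x)/T$ (with $C_0=1$, coming from non-crossing of genuine backward characteristics) combined with the support bound from Lemma~\ref{L1} gives $TV\leq 4\,l_{[L,M,T]}/T$. The preliminary reduction to BV data is harmless but unnecessary, since the characteristic argument and the cited regularity theorems apply directly to ${\bf L}^\infty$ entropy solutions.

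The gap is in the {\bf (NC)} case. You attribute to \cite{KSC86} (and \cite{Marc}) a pointwise one-sided Lipschitz estimate $v(y+)-v(x-)\leq C_0\,(y-x)/T$ for $v=f'\circ S_Tu_0$, but that is not what those results provide: they give a total-variation bound of the form $TV\{v\mid I\}\leq C_M\,|I|/T+\widetilde C_M$ with an \emph{additive} constant $\widetilde C_M$ depending on $f$ and $M$. The distinction matters. The convex-case proof of the one-sided bound rests on $u$ being constant along straight, non-crossing extremal backward characteristics; for a flux with an inflection point this structure degenerates (characteristics can be absorbed by, or emanate tangentially from, left/right contacts), and no positive-wave decay of $f'\circ u$ at rate $1/T$ without an additive term is available. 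Indeed, if such an estimate held by a characteristics argument, it would be insensitive to the polynomial degeneracy order in \eqref{hyp-NC} and would force $f'\circ S_tu_0\in BV$ for \emph{every} flux with a single inflection point, contradicting the counterexample recalled in Remark~\ref{rem:not-pd}; it would also render the additive constants in the theorems of Cheng and Marconi superfluous. So your step ``positive variation $\leq 2C_0\,l_{[L,M,T]}/T$'' is not justified in the non-convex case. The repair is straightforward and is what the paper does: quote the cited results in their actual form, $TV\{f'\circ S_Tu_0\mid(-l_{[L,M,T]}-\ve,\,l_{[L,M,T]}+\ve)\}\leq 2C_M(l_{[L,M,T]}+\ve)/T+\widetilde C_M$, let $\ve\to0$, and observe that the additive constant is absorbed by the ``$1$'' in the target bound \eqref{TV-cv-ncv1}, yielding $C_1\big(1+\tfrac{L}{T}\big)$ with $C_1$ depending only on $f$ and $M$.
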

\begin{proof} 
For convenience of the reader we provide a sketch of the proof since the constants in the right-hand side of~\eqref{TV-cv-ncv1} slightly differs form the ones
in the cited references.

{\bf 1.} Assume that $f$ satisfies the {\bf (C)} condition. \\
Observe first that, 
because of the non intersection property of minimal and maximal backward characteristics~\cite{dafermos:gc}, 
one deduces a one-sided Lipschitz condition on the derivative of the flux~\cite[Section 11.2]{Dafermos:Book}:
\begin{equation}
\label{1side-lip-der-flux}
f'\big(S_T u_0(y)\big)-f'\big(S_Tu_0(x)\big)~\leq~{y-x\over T}\qquad\forall~x<y\,.
\end{equation}
On the other hand, by Lemma~\ref{L1}  we have
$Supp(S_T(u_0))\subseteq [-l_{[L,M,T]},\, l_{[L,M,T]}]$.
Thus, since~\eqref{1side-lip-der-flux}
implies that $x\to f'\big(S_Tu_0(x)\big)-\frac{x}{T}$ is a non increasing map, we find
\begin{multline*}
TV\big\{f'\circ S_Tu_0~|~\R\,\big\}~=~\lim_{\ve\to 0}~TV\big\{f'\circ S_Tu_0~|~(-l_{[L,M,T]}-\!\ve,\,l_{[L,M,T]}+\ve)\big\}\cr
~\leq~ \lim_{\ve\to 0}~\Big[TV\Big\{f'\circ S_Tu_0-{\cdot \over T}~\Big|~(-l_{[L,M,T]}-\!\ve,\, l_{[L,M,T]}+\ve)\Big\}+\\
+TV\Big\{{\cdot \over T}~|~(-l_{[L,M,T]}-\!\ve,\,l_{[L,M,T]}+\ve)\Big\}\Big]~\leq~{4l_{[L,M,T]}\over T}\,,
\end{multline*}
which, by definition~\eqref{LT-def}, yields (\ref{TV-cv-ncv1}).
\medskip

{\bf 2.} Assume that $f$ satisfies the {\bf (NC)} condition. \\
Since by Lemma~\ref{L1}  we have
$\|S_T u_0\|_{{\bf L}^{\infty}(\R)} \leq M$, 
$Supp(S_T(u_0))\subseteq [-l_{[L,M,T]},\, l_{[L,M,T]}]$, 
invoking \cite[Theorem 4.9]{KSC86} (see also~\cite[Theorem 2]{Marc}) we deduce
that, for every $\ve>0$, there holds
\begin{equation}
\label{TV-ncv1}
TV\big\{f'\circ S_Tu_0~|~(-l_{[L,M,T]}-\ve,l_{[L,M,T]}+\ve)\big\}~\leq~\frac{C_M\cdot 2(l_{[L,M,T]}+\ve)}{T}+\widetilde C_M,
\end{equation}
where $C_M, \widetilde C_M>0$
are constants depending only on the flux $f$ and on $M$.
Hence, relying on~\eqref{LT-def}, \eqref{TV-ncv1} we derive
\begin{eqnarray*}
TV\{f'\circ S_T(u_0)~|~(-\infty,+\infty)\}&=&\lim_{\ve\to 0}~TV\{f'\circ S_T(u_0)~|~(-l_{[L,M,T]}-\ve,l_{[L,M,T]}+\ve)\}\\
&\leq&\frac{8C^{f,M}_1L}{T}+2\cdot \Big(C_M\cdot f'_M+\widetilde C_M\Big)\,,
\end{eqnarray*}
which yields (\ref{TV-cv-ncv1}).
\end{proof}
\begin{remark}
\label{rem:not-pd}
\rm
In the non convex case a bound as in~\eqref{TV-cv-ncv1} in general does not hold without
the assumption of polynomial degeneracy in~\eqref{hyp-NC}. 
In fact, it has been exhibited in~\cite[Section 8.1]{Marc} an example of
a flux $f(u)$ having
one inflection point at zero, with $f^j(0)=0$ for all $j\in\N$, $j\geq 2$,
and of an initial data $u_0\in {\bf L}^{\infty}(\R)$ with compact support, such that
$f'\circ S_t(u_0)\notin BV(\R)$ for almost every $t$ in an interval of $(0, \infty)$.
\end{remark}

To complete this section, we recall now two results that provide an upper bound on the  $\ve$-entropy for sets of functions
with uniformly bounded total variation and a lower bound for the
$\ve$-entropy of sets of functions having uniformly bounded one-side derivative.
\begin{lemma}{\rm (\cite[Theorem 1]{BKP})}
\label{Co-BV} Given $L,V>0$, consider the set
\begin{equation}
\label{FLVset-def}
\mathcal{F}_{[L,V]}~\doteq~\Big\{g:\R\to [-V,V]~\big|~\big|~Supp(g)\subseteq [-L,L], \ TV\{g\}~\leq~2V\Big\}\,.
\end{equation}
Then, for all $\ve\leq {VL\over 3}$, there holds
\[
\mathcal{H}_{\ve}\big(\mathcal{F}_{[L,V]}~|~{\bf L}^1(\R)\big)~\leq~48VL\cdot\frac{1}{\ve}\,.
\] 
Moreover, there exists a set of piecewise constant functions $\{g_1,\dots,g_p\} \subset \mathcal{F}_{[L,V]}$,
with 
\begin{equation*}
p~\leq~\Bigg\lfloor2^{\big(\frac{48 VL}{\varepsilon}\big)}\Bigg\rfloor+1\,,
\end{equation*}
that satisfy
\begin{equation*}
g_i(x)~=~g_i\Big(\!-L+\frac{2L}{N}\cdot\nu\Big)\qquad\quad
\forall~x\in \Big[\!-L+\frac{2L}{N}\cdot\nu,\ -L+\!\frac{2L}{N}\cdot(\nu\!+\!1)\Big),
\qquad\nu\in\{0,1,\dots,N\!-\!1\},
\end{equation*}
with
\begin{equation*}
N~\geq~\bigg\lfloor\frac{8LV}{\varepsilon}\bigg\rfloor\,,
\end{equation*}
and such that
\begin{equation}
\mathcal{F}_{[L,V]}~\subset~\bigcup_{i=1}^p B\big(g_i,\,\ve\big)\,,
\end{equation}
where $B\big(g_i,\,\ve\big)$ denotes the ${\bf L}^1(\R)$-ball centred at $g_i$ of radius $\ve$.
\end{lemma}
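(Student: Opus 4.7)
The plan is to establish both assertions simultaneously by an explicit construction of an $\varepsilon$-cover of $\mathcal{F}_{[L,V]}$ consisting of piecewise constant functions on a common uniform partition of $[-L,L]$, and then deduce the entropy bound by counting the cardinality of this cover. The construction relies on a joint spatial and value discretization; the entire argument is essentially the proof of Theorem 1 of [BKP], adapted to record explicitly the piecewise constant structure of the cover.

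First, I would fix the spatial discretization $N=\lfloor 8LV/\varepsilon\rfloor$, partitioning $[-L,L]$ into $N$ subintervals $I_\nu$ of common length $2L/N$, and choose a value quantization step $\delta$ of order $\varepsilon/L$ (specifically $\delta=\varepsilon/(4L)$). To each $g\in\mathcal{F}_{[L,V]}$ I would associate the approximant $\tilde g$ that is constant on every $I_\nu$ and whose value there is the element of the grid $\delta\,\mathbb{Z}\cap[-V,V]$ nearest to the mean of $g$ on $I_\nu$. Averaging rather than pointwise sampling is preferable because it guarantees that the discrete total variation of $\tilde g$ does not exceed $TV\{g\}\leq 2V$.

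Next I would verify the covering property $\|g-\tilde g\|_{{\bf L}^1}\leq\varepsilon$ by splitting the error into two parts: a piecewise-constant contribution, bounded on each $I_\nu$ by (length of $I_\nu$) times $TV\{g\,|\,I_\nu\}$, which sums to at most $(2L/N)\cdot 2V\leq\varepsilon/2$; and a quantization contribution bounded by $\delta\cdot 2L\leq\varepsilon/2$.

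Finally I would count the admissible approximants. Each $\tilde g$ is encoded by an integer sequence $(k_0,\dots,k_{N-1})$ with $|k_\nu|\leq V/\delta+1$ and $\sum_{\nu\geq 1}|k_\nu-k_{\nu-1}|\leq 2V/\delta$. A stars-and-bars estimate, obtained by separating the positive and negative increments and using $\binom{n+k}{k}\leq 2^{n+k}$, bounds the number of such sequences by roughly $(2V/\delta+1)\cdot\binom{2N+2V/\delta}{2V/\delta}$, which after substituting the chosen values of $N$ and $\delta$ gives the desired bound $p\lesssim 2^{48VL/\varepsilon}$. The main obstacle is the careful calibration of $N$ and $\delta$ to recover the sharp constant $48$ in the statement, and in particular ensuring via the averaging choice that the discrete $\ell^1$-variation of the encoding sequence is genuinely controlled by the continuous $TV$ of $g$; the combinatorial count itself is routine once the encoding is in place.
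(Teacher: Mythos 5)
The paper does not actually prove this lemma: it is imported verbatim from \cite[Theorem~1]{BKP}, so there is no internal proof to compare against. Your proposal is a correct reconstruction of the standard argument behind that result (joint space/value discretization of $[-L,L]\times[-V,V]$, averaging on a uniform partition, and a stars-and-bars count of the admissible increment sequences), and the generous constant $48$ leaves ample room for the bookkeeping you defer. Two calibration points deserve explicit care, both of which you partly anticipate. First, rounding each interval mean \emph{independently} to the nearest point of $\delta\mathbb{Z}\cap[-V,V]$ can increase each increment by up to $\delta$, hence the discrete variation of the encoding sequence is only bounded by $2V/\delta+N\approx 4V/\delta$, not $2V/\delta$; with $N\approx 8LV/\varepsilon$ and $\delta=\varepsilon/(4L)$ the count $\binom{2N+4V/\delta}{4V/\delta}\leq 2^{2N+4V/\delta}\approx 2^{32LV/\varepsilon}$ still sits comfortably under $2^{48LV/\varepsilon}$, but the constraint as you wrote it is not what the construction delivers. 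Second, and for the same reason, the quantized approximants can have total variation slightly exceeding $2V$, so they need not belong to $\mathcal{F}_{[L,V]}$ as the statement requires; this is repaired by replacing nearest-point rounding with a variation-non-increasing quantizer (e.g.\ a ``lazy'' quantizer that updates the value only when the running mean has drifted by more than $\delta$, or rounding each mean toward the previously chosen grid value), which keeps both the discrete variation of the code and the total variation of the approximant bounded by that of $g$ while preserving the $O(\delta)$ pointwise accuracy. With those two adjustments your argument goes through and yields exactly the statement of the lemma.
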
 
\medskip 
\begin{lemma}{\rm (\cite[Proposition 2.2]{AON1})}
\label{lowb} 
Given $L,h,b>0$, consider the sets
\begin{equation}
\label{Bpm}
\begin{aligned}
\mathcal{B}_{[L,h, \leq b]} &\doteq~\Big\{v\in BV(\R)~\big|~Supp(v)\subseteq [-L,L], \ \|v\|_{{\bf L}^{\infty}(\R)}\leq h, \ Dv\leq b
\Big\}\,,
\\
\noalign{\medskip}
\mathcal{B}_{[L,h, \geq -b]} &\doteq~\Big\{v\in BV(\R)~\big|~Supp(v)\subseteq [-L,L], \ \|v\|_{{\bf L}^{\infty}(\R)}\leq h, \ Dv\geq -b
\Big\}\,,
\end{aligned}
\end{equation}
where the inequalities $Dv\leq b$, $Dv\geq -b$
must  be understood in the sense of measures, i.e. the Radon measure $D v$
satisfies $D v (J)\leq b\cdot|J|$,  $D v (J)\geq -b\cdot|J|$, respectively,
for every Borel set $J\subset\R$, $|J|$ being the Lebesgue measure of $J$.
Then, for any $0<\ve\leq {Lh\over 6}$, there holds 
\begin{equation}
\nonumber
\begin{aligned}
\mathcal{H}_{\ve}\big(\mathcal{B}_{[L,h,\geq b]}~\big|~{\bf L}^1(\R)\big)~&\geq~ {2bL^2\over 27\ln 2}\cdot \frac{1}{\ve}\,,
\\
\noalign{\smallskip}
\mathcal{H}_{\ve}\big(\mathcal{B}_{[L,h,\leq -b]}~\big|~{\bf L}^1(\R)\big)~&\geq~ {2bL^2\over 27\ln 2}\cdot \frac{1}{\ve}\,.
\end{aligned}
\end{equation}
\end{lemma}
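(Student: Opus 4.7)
The plan is to exhibit, for each of the two sets, an explicit $2\varepsilon$-separated family in ${\bf L}^{1}(\R)$. The lower bound on the covering number then follows from the elementary fact that any cover by sets of diameter at most $2\varepsilon$ can contain at most one element of such a family per cover set. I treat $\mathcal{B}_{[L,h,\leq b]}$; the estimate for $\mathcal{B}_{[L,h,\geq -b]}$ is then immediate from the isometry $v\mapsto -v$, which maps one set onto the other while preserving supports, ${\bf L}^{\infty}$-norms, and ${\bf L}^{1}$-distances.

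For the construction I partition $[-L,L]$ into $N$ equal subintervals $I_{j}$ of length $\ell\doteq 2L/N$, and on each $I_{j}$ define the elementary ramp $\phi_{j}$ which rises linearly from $0$ with slope $b$ across $I_{j}$ and then drops back to $0$ at the right endpoint. For every $\eta\in\{0,1\}^{N}$ set $v_{\eta}\doteq\sum_{j=1}^{N}\eta_{j}\phi_{j}$. The absolutely continuous part of $Dv_{\eta}$ equals $b\,\mathbf{1}_{\bigcup\{I_{j}:\,\eta_{j}=1\}}$ and its singular part consists only of negative Dirac masses, so $v_{\eta}$ satisfies $Dv_{\eta}\leq b$ as a measure; moreover $\|v_{\eta}\|_{{\bf L}^{\infty}}\leq b\ell$, hence $v_{\eta}\in\mathcal{B}_{[L,h,\leq b]}$ as soon as $N\geq 2bL/h$. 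Since the supports of the $\phi_{j}$ are pairwise disjoint, a direct computation gives
\begin{equation*}
\|v_{\eta}-v_{\eta'}\|_{{\bf L}^{1}(\R)} ~=~ \|\phi_{1}\|_{{\bf L}^{1}}\cdot d_{H}(\eta,\eta') ~=~ \frac{2bL^{2}}{N^{2}}\,d_{H}(\eta,\eta'),
\end{equation*}
where $d_{H}$ denotes the Hamming distance.

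The coding step is a Gilbert--Varshamov-type greedy selection: there exists a subset $\mathcal{E}\subset\{0,1\}^{N}$ whose pairwise Hamming distance exceeds $r\doteq\lfloor\varepsilon N^{2}/(bL^{2})\rfloor$ and whose cardinality satisfies $|\mathcal{E}|\geq 2^{N}/\sum_{k=0}^{r}\binom{N}{k}$. By the previous display the family $\{v_{\eta}\}_{\eta\in\mathcal{E}}$ is pairwise ${\bf L}^{1}$-separated by strictly more than $2\varepsilon$, so $\mathcal{N}_{\varepsilon}(\mathcal{B}_{[L,h,\leq b]})\geq|\mathcal{E}|$, and therefore
\begin{equation*}
\mathcal{H}_{\varepsilon}\bigl(\mathcal{B}_{[L,h,\leq b]}\,\big|\,{\bf L}^{1}(\R)\bigr) ~\geq~ N-\log_{2}\!\sum_{k=0}^{r}\binom{N}{k}.
\end{equation*}

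Finally I would balance the parameters by choosing $N$ of order $bL^{2}/\varepsilon$, so that the ratio $r/N$ is a fixed constant strictly below $1/2$. The elementary binomial estimate $\sum_{k\leq r}\binom{N}{k}\leq 2^{NH_{2}(r/N)}$, with $H_{2}$ the binary entropy function, then yields $\mathcal{H}_{\varepsilon}\gtrsim bL^{2}/\varepsilon$ with an explicit constant, and the hypothesis $\varepsilon\leq Lh/6$ is precisely what keeps the admissibility condition $N\geq 2bL/h$ compatible with the optimal choice. The main obstacle is this last step: extracting the sharp numerical constant $2/(27\ln 2)$ requires a careful quantitative estimate of the binomial tail in the regime where $r/N$ is bounded away from $1/2$, while the geometric construction and the separation count themselves are essentially routine.
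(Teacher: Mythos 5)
Your overall strategy --- a family of sawtooth functions indexed by the Hamming cube, a per-coordinate $\mathbf{L}^1$-separation identity, and a counting argument on Hamming balls --- is exactly the strategy of the cited proof in [AON1]. The genuine gap is not, as you suspect, in the binomial-tail estimate: it is in the choice of building block, and it cannot be repaired by sharpening the tail bound. Your dichotomy on each subinterval $I_j$ of length $\ell=2L/N$ is ``ramp of slope $b$'' versus ``zero'', so the difference between the two options is a triangle of area $\tfrac12 b\ell^2=2bL^2/N^2$. With that separation, $r=\lfloor \varepsilon N^2/(bL^2)\rfloor$, and even using the asymptotically sharp bound $\log_2\sum_{k\le r}\binom{N}{k}\sim N H_2(r/N)$, the best you can extract is $\sup_{\alpha\in(0,1/2)}\alpha\,(1-H_2(\alpha))\cdot bL^2/\varepsilon\approx 0.0585\,bL^2/\varepsilon$, whereas the claimed constant is $2/(27\ln 2)\approx 0.1069$. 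So your construction provably falls short of the stated bound by roughly a factor of $1.8$. A secondary symptom of the same mismatch: at your optimal $N\approx 0.16\,bL^2/\varepsilon$ the admissibility condition $N\ge 2bL/h$ forces $\varepsilon\lesssim Lh/12.5$, not the stated $\varepsilon\le Lh/6$.

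The fix is to make the two options on each $I_j$ differ by a \emph{constant} $b\ell$ rather than by a triangle: for $\iota_j=+1$ take $v=b(x-x_j)$ on $I_j$ (rising from $0$ to $b\ell$, then a downward jump), and for $\iota_j=-1$ take $v=b(x-x_{j+1})$ (a downward jump to $-b\ell$, then rising back to $0$). Both satisfy $Dv\le b$ in the sense of measures, both have sup norm $b\ell$, and their difference on $I_j$ is identically $b\ell$, so $\|v_\iota-v_{\iota'}\|_{\mathbf{L}^1}=\frac{4bL^2}{N^2}\,d_H(\iota,\iota')$ --- twice your separation. Then any set of diameter $\le 2\varepsilon$ contains at most $\sum_{k\le r}\binom{N}{k}$ of the $2^N$ functions with $r=\varepsilon N^2/(2bL^2)$ (no Gilbert--Varshamov extraction is needed), and Hoeffding's inequality gives $\mathcal{H}_\varepsilon\ge \frac{2(N/2-r)^2}{N\ln 2}$. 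Choosing $N=\lfloor bL^2/(3\varepsilon)\rfloor$ makes $r=N/6$ and yields exactly $\frac{2bL^2}{27\ln 2}\cdot\frac1\varepsilon$, while the sup-norm constraint $b\ell=6\varepsilon/L\le h$ is precisely the hypothesis $\varepsilon\le Lh/6$ --- which is how one can tell this is the intended construction. Your reduction of $\mathcal{B}_{[L,h,\ge-b]}$ to $\mathcal{B}_{[L,h,\le b]}$ via $v\mapsto-v$ is fine.
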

\section{Upper compactness estimates} 
\label{sec:uce}
We derive in this section 
upper bounds on the $\ve$-entropy in $\L^1$ of
 $S_T(\mathcal{C}_{[L,M]})$ for the class of initial data $\mathcal{C}_{[L,M]}$ in~\eqref{DefCLM},
 when the flux function $f$ satisfies either of the assumptions {\bf (C)} or {\bf(NC)}
 stated in the Introduction. 

Towards a proof of~\eqref{uest-c}, \eqref{uest-nc},  we first establish an upper bound on the $\ve$-entropy in~$\L^1$ of
the set  
 \begin{equation}
 \label{LLMT-set-def}
 \mathcal{L}_{[L,M,T]}~\doteq~ 
 \Big\{ f'\circ u
\ \big| \ u \in S_T({\mathcal C}_{[L,M]})\Big\}\,.
 \end{equation}
\begin{lemma}
\label{entr-f'-sol-set}
In the same setting of Theorem~\ref{ThmLUBC}
or Theorem~\ref{ThmLUBNC},
assume that $f:\R\to\R$ is a  function satisfying 
either of the {\bf (C)} or {\bf (NC)} conditions and
that~\eqref{EqZeroSpeed} holds.
Then, given any $L,M,T>0$, for all $\ve\leq \frac{\Gamma_{\!1}^+}{288}$, there holds
\begin{equation}
\label{uest-fprime-c}
\mathcal{H}_{\varepsilon}\Big( \mathcal{L}_{[L,M,T]}
\ | \ {\bf L}^{1}(\R)\Big)~\leq~ 
\frac{\Gamma_{\!1}^+}{2}\cdot\frac{1}{\varepsilon}\,,
\end{equation}
with $\Gamma_{\!1}^+$ as in~\eqref{G1+-def}.
Moreover, there exists a set of piecewise constant functions
$\{g_1,\dots,g_p\} $,
with 
\begin{equation*}
p~\leq~\Bigg\lfloor2^{\Big(\frac{\Gamma_{\!1}^+}{\small\strut 2\,\varepsilon}\Big)}\Bigg\rfloor+1\,,
\end{equation*}
that enjoy the following properties:
\begin{itemize}
\item[(i)]
For any $i=1,\dots, p$, one has 
\[
Supp(g_i)~\subseteq~\big[\!- l_{[L,M,T]},\, l_{[L,M,T]}\big],
\]
\[
Im(g_i)~\subseteq~\big[\!-\!f'_M, f'_M\big]\quad\text{if}\quad \text{\bf (C)} \ \ holds,
\quad\qquad Im(g_i)~\subseteq~\big[0, f'_M\big]\quad\text{if}\quad \text{\bf (NC)} \ \ holds\,,
\]
and
\begin{equation*}
\hspace{-20pt}
g_i(x)~=~g_i(x_\nu)
\qquad\quad\forall~x\in[x_\nu,\,x_{\nu+1})\,,
\qquad \nu\in\{0,1,\dots,N\!-\!1\},
\end{equation*}
with
\begin{equation*}
x_{\nu}~\doteq~ -l_{[L,M,T]}+ {2\,  l_{[L,M,T]}\over N}\cdot\nu\,,
\qquad
\nu\in \{0,1,..., N\}\,,
\quad\quad
N~\geq~\bigg\lfloor\frac{8\, l_{[L,M,T]}\cdot V_{[L,M,T]}}{\varepsilon}\bigg\rfloor,
\end{equation*}
where $f'_M$, $l_{[L,M,T]}$ are the constants defined
in~\eqref{der-flux-bound}, \eqref{LT-def}, respectively, and 
\begin{equation}
\label{VLT-def}
V_{[L,M,T]}~\doteq~\max
\bigg\{
\frac{C_1}{2}\cdot\bigg(1+\frac{L}{T}\bigg),\, f'_M
\bigg\},
\end{equation}
$C_1$ being the constants defined in~\eqref{TV-cv-ncv1}.

\item[(ii)] 
\begin{equation*}
\mathcal{L}_{[L,M,T]}~\subset~\bigcup_{i=1}^p B\big(g_i,\,\ve\big)\,.
\end{equation*}
\end{itemize}
\end{lemma}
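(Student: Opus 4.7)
The plan is to derive~\eqref{uest-fprime-c} together with the accompanying piecewise constant cover as a direct consequence of Lemma~\ref{Co-BV} (the BKP covering estimate) applied to the family $\mathcal{L}_{[L,M,T]}$. The key step is to verify the set inclusion
\begin{equation*}
\mathcal{L}_{[L,M,T]} ~\subseteq~ \mathcal{F}_{\big[l_{[L,M,T]},\, V_{[L,M,T]}\big]}\,.
\end{equation*}
Once this is in hand, both the entropy estimate and the explicit cover with the structural properties~(i)--(ii) are precisely what Lemma~\ref{Co-BV} delivers, after the substitutions $L \mapsto l_{[L,M,T]}$ and $V \mapsto V_{[L,M,T]}$.

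\textbf{Verifying the inclusion.} Fix $u_0\in \mathcal{C}_{[L,M]}$ and set $g \doteq f'\circ S_T u_0$. Lemma~\ref{L1} yields $\|S_T u_0\|_{\mathbf{L}^\infty(\R)}\leq M$ and $\mathrm{Supp}(S_T u_0)\subseteq [-l_{[L,M,T]},\,l_{[L,M,T]}]$; combined with~\eqref{EqZeroSpeed}, this gives $\mathrm{Supp}(g)\subseteq [-l_{[L,M,T]},\,l_{[L,M,T]}]$ and, by definition~\eqref{der-flux-bound}, $|g|\leq f'_M$. In the \textbf{(NC)} case, the sign condition in~\eqref{hyp-NC} together with $m$ even and $f'(0)=0$ forces $f'$ to have constant sign on $\R$; replacing $f$ by $-f$ if necessary we may assume $f'\geq 0$, so that $\mathrm{Im}(g)\subseteq [0,\,f'_M]$. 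Lemma~\ref{L2} then provides $TV\{g\}\leq C_1\,(1+L/T)$. By the choice~\eqref{VLT-def} of $V_{[L,M,T]}$ we have simultaneously $|g|\leq V_{[L,M,T]}$ and $TV\{g\}\leq 2\,V_{[L,M,T]}$, which places $g$ in $\mathcal{F}_{[l_{[L,M,T]},\,V_{[L,M,T]}]}$ and establishes the desired inclusion.

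\textbf{Conclusion and matching of constants.} Applying Lemma~\ref{Co-BV} with $L\mapsto l_{[L,M,T]}$ and $V\mapsto V_{[L,M,T]}$ supplies a family $\{g_1,\dots,g_p\}$ of piecewise constant functions on the uniform grid $x_\nu=-l_{[L,M,T]}+(2\,l_{[L,M,T]}/N)\,\nu$, with $N\geq \lfloor 8\,l_{[L,M,T]} V_{[L,M,T]}/\varepsilon\rfloor$, taking values in $[-V_{[L,M,T]},\,V_{[L,M,T]}]$ (and in $[0,\,f'_M]$ under \textbf{(NC)}), and covering $\mathcal{L}_{[L,M,T]}$ with cardinality $p\leq\lfloor 2^{48\,l_{[L,M,T]} V_{[L,M,T]}/\varepsilon}\rfloor+1$. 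Property~(i) is then immediate (noting that $[-V_{[L,M,T]},V_{[L,M,T]}]\supseteq[-f'_M,f'_M]$ is not what we want; rather, one keeps the sharper range $[-f'_M,f'_M]$ since $g$ itself takes values there), and~(ii) is direct. The remaining task is to check that, for a suitably large constant $c_1=c_1(f,M)$, one has $96\,l_{[L,M,T]} V_{[L,M,T]}\leq \Gamma_{\!1}^+=c_1(L+T+L^2/T)$: expanding through~\eqref{LT-def}, \eqref{VLT-def}, and~\eqref{der-flux-bound}, the product $l_{[L,M,T]} V_{[L,M,T]}$ splits into terms proportional (with coefficients involving only $f'_M$ and $C_1$) to $L$, $T$, $Lf'_M$, $T(f'_M)^2$, and $L^2/T$, each of which is controlled by $L+T+L^2/T$. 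The same choice of $c_1$ also makes the threshold $\varepsilon\leq \Gamma_{\!1}^+/288$ encode the smallness assumption needed for Lemma~\ref{Co-BV}.

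The essential content is already carried by Lemmas~\ref{L1}, \ref{L2}, and~\ref{Co-BV}; the only obstacle is the routine arithmetic of constants, done once and for all by fixing $c_1$ large enough to dominate all contributions coming from $f'_M$ and $C_1$.
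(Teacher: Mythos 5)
Your proof is correct and follows essentially the same route as the paper: the inclusion $\mathcal{L}_{[L,M,T]}\subseteq \mathcal{F}_{[l_{[L,M,T]},\,V_{[L,M,T]}]}$ obtained from Lemmas~\ref{L1}--\ref{L2}, followed by an application of Lemma~\ref{Co-BV} with the substitutions $L\mapsto l_{[L,M,T]}$, $V\mapsto V_{[L,M,T]}$ and the bookkeeping of constants into $\Gamma_{\!1}^+$. You in fact supply more detail (the sign of $f'$ in the {\bf (NC)} case and the expansion of $l_{[L,M,T]}V_{[L,M,T]}$) than the paper's own two-line argument.
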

\begin{proof}
Observe first that, relying on Lemmas~\ref{L1}-\ref{L2}, we have
\begin{equation}
\label{derf-sol-incl1}
\mathcal{L}_{[L,M,T]}~\subseteq~
\mathcal{F}_{[l_{[L,M,T]},\,V_{[L,M,T]}]}\,,
\end{equation}
where $\mathcal{F}_{[l_{[L,M,T]},\,V_{[L,M,T]}]}$ is a set defined as in~\eqref{FLVset-def}.
Therefore, invoking Lemma~\ref{Co-BV}, we derive
\begin{equation}
\begin{aligned}
\mathcal{H}_{\varepsilon}\Big(\mathcal{L}_{[L,M,T]}
\ | \ {\bf L}^{1}(\R)\Big) 
&\leq~ 
\mathcal{H}_{\varepsilon}\Big(\mathcal{F}_{[l_{[L,M,T]},\,V_{[L,M,T]}]} \ | \ {\bf L}^{1}(\R)\Big) \\
\noalign{\smallskip}
&\leq~ 48\cdot
\max
\bigg\{
\frac{C_1\big(L+T\cdot f'_M)}{2}\cdot\bigg(1+\frac{L}{T}\bigg),\, f'_M \big(L+T\cdot f'_M)
\bigg\}\cdot\frac{1}{\varepsilon}\,,
\end{aligned}
\end{equation}
which yields~\eqref{uest-fprime-c}, and we deduce
the existence of piecewise constant functions $\{g_1,\dots,g_p\}$
enjoying the properties (i)-(ii).
\end{proof}
\medskip

\subsection{Strictly (not necessarily uniformly) convex fluxes} 
In this subsection, we will study the case where $f$ is a convex function
satisfying the assumption {\bf (C)} which in particular implies that   $f'$ is strictly increasing
and hence invertible on $\R$. 

In order to establish~\eqref{uest-c}, we will use 
the following technical lemma providing
an estimate of the ${\bf L}^1$-distance of two elements $u,v\in S_T(\mathcal{C}_{[L,M]})$ in terms
of the  ${\bf L}^1$-distance of $f'\circ u, f'\circ v$. 
To this end, consider the map
\begin{equation}
\label{delta+-funct-def}
\widehat\Delta_{_{f,M}}(s)~\doteq~s \cdot
\inf_{\substack{\\
|u|,|v|\leq M
\\
|v-u|\geq s}}
\bigg|\frac{f'(v)-f'(u)}{v-u}\bigg|
\qquad\forall~s>0\,,
\end{equation}
which differs form the map in~\eqref{delta-funct-def} for the fact that the infimum is taken also over 
pairs $u,v$ of opposite sign.
Observe that the maps 
$s\mapsto \widehat\Delta_{_{f,M}}(s)$, $s\mapsto \frac{\widehat\Delta_{_{f,M}}(s)}{s}$ are strictly increasing and thus invertible. Moreover,  one has
\begin{equation}
\label{inv-der-f-lip-est-0}
\hspace{1in}\widehat\Delta_{_{f,M}}\big(|u-v|\big)~\leq~ |f'(u)-f'(v)|
\quad\qquad\forall~u,v\in\R\quad\text{s.t.}\quad |u|, |v|\leq M\,.
\end{equation}
\begin{lemma}
\label{inv-der-f-lip-est}
Let $f:\R\to\R$ be a differentiable map.
Given any $L,M>0$, for every $u,v\in {\bf L}^\infty(\R)$ with 
\begin{equation}
\label{hyp-inv-der-f-lip-est}
\|u\|_{{\bf L}^{\infty}}~\leq~M, \ \|v\|_{{\bf L}^{\infty}}~\leq~M
\qquad\mathrm{and}\qquad Supp(u)\subset [-L,L], \  Supp(v)\subset [-L,L],
\end{equation}
there holds
\begin{equation}
\label{inv-der-f-lip-est-1}
\big\|u-v\big\|_{{\bf L}^1(\R)}
~\leq~(1+2L)\cdot {\widehat\Delta}^{\strut -1}_{_{f,M}}\Big(\big\|f'\circ u-f'\circ v\big\|_{{\bf L}^1(\R)}\Big).
\end{equation}
\end{lemma}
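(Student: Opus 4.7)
The plan is to combine the pointwise estimate~\eqref{inv-der-f-lip-est-0} with a level-set decomposition argument, exploiting monotonicity properties of $\widehat\Delta_{_{f,M}}$ that follow directly from its definition.

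First I would record the following elementary but crucial property of $\widehat\Delta=\widehat\Delta_{_{f,M}}$: since
\[
\frac{\widehat\Delta(s)}{s}~=~\inf\Big\{\tfrac{|f'(v)-f'(u)|}{|v-u|}\ :\ |u|,|v|\leq M,\ |v-u|\geq s\Big\},
\]
and the infimum is taken over a set that shrinks as $s$ grows, the map $s\mapsto \widehat\Delta(s)/s$ is non-decreasing on $(0,+\infty)$. Consequently, whenever $0<\eta\leq r$ one has $\widehat\Delta(r)\geq (\widehat\Delta(\eta)/\eta)\cdot r$, i.e.
\[
r~\leq~\frac{\eta}{\widehat\Delta(\eta)}\cdot\widehat\Delta(r).
\]

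Next, under the hypotheses~\eqref{hyp-inv-der-f-lip-est}, both $u$ and $v$ vanish outside $[-L,L]$, so $u-v$ and $f'\circ u-f'\circ v$ are supported in $[-L,L]$. Set
\[
\eta~\doteq~\widehat\Delta^{-1}\bigl(\|f'\circ u-f'\circ v\|_{{\bf L}^1(\R)}\bigr),
\]
and split $[-L,L]=A_{1}\cup A_{2}$, with $A_{1}\doteq\{x:|u(x)-v(x)|>\eta\}$ and $A_{2}\doteq\{x:|u(x)-v(x)|\leq \eta\}$. On $A_{2}$ one trivially has $\int_{A_{2}}|u-v|\,dx\leq \eta\cdot|A_{2}|\leq 2L\,\eta$. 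On $A_{1}$, applying the monotonicity inequality above with $r=|u(x)-v(x)|>\eta$, together with the pointwise bound $\widehat\Delta(|u(x)-v(x)|)\leq |f'(u(x))-f'(v(x))|$ provided by~\eqref{inv-der-f-lip-est-0}, yields
\[
|u(x)-v(x)|~\leq~\frac{\eta}{\widehat\Delta(\eta)}\,\bigl|f'(u(x))-f'(v(x))\bigr|.
\]
Integrating this over $A_{1}$ and using the definition of $\eta$ (so that $\widehat\Delta(\eta)=\|f'\circ u-f'\circ v\|_{{\bf L}^1(\R)}$) gives $\int_{A_{1}}|u-v|\,dx\leq \eta$. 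Adding the two contributions produces exactly $(1+2L)\,\eta$, which is~\eqref{inv-der-f-lip-est-1}.

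The only subtle point is verifying the monotonicity of $\widehat\Delta(s)/s$, and making sure $\widehat\Delta^{-1}$ is well defined on the relevant range; both are direct from the definition~\eqref{delta+-funct-def} and from the fact that $\widehat\Delta$ is strictly increasing (as noted in the text immediately after~\eqref{delta+-funct-def}). No convexity, subadditivity, or Jensen-type inequality is required for $\widehat\Delta^{-1}$: the star-shapedness of $\widehat\Delta$ at the origin replaces Jensen's inequality in the level-set split. Once that is in place, the proof is just two line-by-line estimates, and the additive constant $1+2L$ arises naturally from the trivial bound $|A_2|\leq 2L$ combined with the contribution $\eta$ from $A_1$.
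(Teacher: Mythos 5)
Your proof is correct and is essentially the paper's own argument: the paper sets the same threshold $\rho=\widehat\Delta^{-1}_{_{f,M}}\big(\|f'\circ u-f'\circ v\|_{{\bf L}^1}\big)$, proves the pointwise bound $|u(x)-v(x)|\leq\rho\cdot\max\big\{1,\ |f'(u(x))-f'(v(x))|/\|f'\circ u-f'\circ v\|_{{\bf L}^1}\big\}$ via the monotonicity of $s\mapsto\widehat\Delta_{_{f,M}}(s)/s$ and~\eqref{inv-der-f-lip-est-0}, and integrates over $[-L,L]$ to get $2L\rho+\rho$. Your level-set split into $A_1$ and $A_2$ is just an explicit rewriting of that $\max$, so nothing further is needed.
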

\begin{proof}\hspace{1in}

\noindent
{\bf 1.} We claim that, setting 
\begin{equation}
\label{ro-def}
\rho~\doteq~{\widehat\Delta}^{\strut -1}_{_{f,M}}\Big(\big\|f'\circ u-f'\circ v\big\|_{{\bf L}^1(\R)}\Big), 
\end{equation}
one has
\begin{equation}
\label{E1}
\big|u(x)-v(x)\big|~\leq~\rho\cdot\max\bigg\{1,\ \frac{\big|f'(u(x))-f'(v(x))\big|}{\ \ \ \ \ \big\|f'\circ u-f'\circ v\big\|_{{\bf L}^1(\R)}}
\bigg\}\qquad\forall~x\in\R\,.
\end{equation}
Indeed, assume that $|u(x)-v(x)|\geq\rho$. Then, relying on~\eqref{inv-der-f-lip-est-0}, \eqref{ro-def},
and on the monotonicity of $s\to \frac{\widehat\Delta_{_{f,M}}(s)}{s}$, we estimate
\begin{equation}
\begin{aligned}
\big|u(x)-v(x)\big| &\leq~ \frac{|u(x)-v(x)|}{\widehat\Delta_{_{f,M}}\big(|u(x)-v(x)|\big)}\cdot\big|f'(u(x))-f'(v(x))\big|
\\
&\leq~ \frac{\rho}{\widehat\Delta_{_{f,M}}(\rho)}\cdot\big|f'(u(x))-f'(v(x))\big|
\\
&=~\rho\cdot \frac{\big|f'(u(x))-f'(v(x))\big|}{\ \ \ \ \ \big\|f'\circ u-f'\circ v\big\|_{{\bf L}^1(\R)}}
\end{aligned}
\end{equation}
which yields~\eqref{E1}.
\medskip

\noindent{\bf 2.}
Thanks to~\eqref{ro-def}, \eqref{E1}, and since by~\eqref{hyp-inv-der-f-lip-est} one has $u=v=0$ on $\R\setminus [-L,L]$, 
we derive
\begin{equation}
\begin{aligned}
\big\|u-v\big\|_{{\bf L}^1(\R)} &\leq~\int_{-L}^L \big|u(x)-v(x)\big|dx \\
\noalign{\smallskip}
&\leq~\int_{-L}^L\rho \cdot\Bigg(1+ \frac{\big|f'(u(x))-f'(v(x))\big|}{\ \ \ \ \ \big\|f'\circ u-f'\circ v\big\|_{{\bf L}^1(\R)}}\Bigg)dx\\
\noalign{\smallskip}
&\leq~2L \,\rho +\rho\,,
\end{aligned}
\end{equation}
which proves~\eqref{inv-der-f-lip-est-1}.
\end{proof}
The next lemma shows that  $\Delta_{_{f,M}}, \widehat\Delta_{_{f,M}}$
are comparable maps. 
\begin{lemma}
\label{delta-maps-prop}
Given a map $f:\R\to\R$ satisfying the assumption {\bf (C)},
let $\Delta_{_{f,M}}, \widehat\Delta_{_{f,M}}$
be the maps defined in~\eqref{delta-funct-def}, \eqref{delta+-funct-def}, respectively.
Then, one has
\begin{equation}
\label{Delta-comp}
\Delta_{_{f,M}}(s/2)~\leq~{\widehat\Delta}_{_{f,M}}(s)~\leq~\Delta_{_{f,M}}(s)
\end{equation}
\end{lemma}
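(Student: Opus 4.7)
The right-hand inequality $\widehat\Delta_{_{f,M}}(s)\leq\Delta_{_{f,M}}(s)$ is essentially by inspection. Both maps multiply $s$ by an infimum of the absolute slope $|f'(v)-f'(u)|/|v-u|$ over pairs with $|u|,|v|\leq M$ and $|v-u|\geq s$; the only difference is that $\widehat\Delta_{_{f,M}}$ drops the restriction $u\cdot v\geq 0$. Thus its infimum is taken over a strictly larger family and can only be smaller, which gives the upper inequality without any use of convexity.

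For the left-hand inequality $\Delta_{_{f,M}}(s/2)\leq\widehat\Delta_{_{f,M}}(s)$, the plan is to write $\Delta_{_{f,M}}(r)=r\cdot A(r)$ and $\widehat\Delta_{_{f,M}}(r)=r\cdot B(r)$ for the corresponding infima of slopes, so the target inequality becomes $A(s/2)\leq 2\,B(s)$. Equivalently, for every admissible pair $(u,v)$ in the definition of $B(s)$, one must show
\[
\frac{|f'(v)-f'(u)|}{|v-u|}\ \geq\ \frac{A(s/2)}{2}.
\]
When $u\cdot v\geq 0$ this is immediate: the pair is admissible for $A(s)$, so the ratio is bounded below by $A(s)$, and since relaxing the gap constraint enlarges the index set, $A(s)\geq A(s/2)\geq A(s/2)/2$.

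The heart of the argument is the mixed-sign case, say $u<0<v$ with $v-u\geq s$. Up to swapping the roles of $u$ and $v$ (which preserves every condition), I may assume $v\geq|u|$, which forces $v\geq (v-u)/2\geq s/2$. Hence the pair $(0,v)$ is same-signed, satisfies $|v-0|\geq s/2$ and $|0|,|v|\leq M$, so by the definition of $A(s/2)$ one has $|f'(v)-f'(0)|\geq A(s/2)\cdot v$. Convexity enters only at this point, through the fact that $f'$ is strictly increasing: $f'(u)\leq f'(0)\leq f'(v)$, so
\[
f'(v)-f'(u)\ \geq\ f'(v)-f'(0)\ \geq\ A(s/2)\cdot v\ \geq\ \frac{A(s/2)}{2}\cdot(v-u),
\]
which is exactly the required bound. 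Multiplying through by $s$ yields $\widehat\Delta_{_{f,M}}(s)\geq \Delta_{_{f,M}}(s/2)$.

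I do not anticipate any serious obstacle. The only step that needs a little care is the WLOG reduction $v\geq|u|$ in the mixed-sign case: this is what makes the longer of the two subintervals $[u,0]$, $[0,v]$ always carry at least half of the total length $v-u$, which is precisely what forces the loss of exactly a factor $2$ (and hence the shift from $s$ to $s/2$) in the lower comparison.
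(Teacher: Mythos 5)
Your proof is correct and takes essentially the same route as the paper's: the upper inequality follows because the infimum defining $\widehat\Delta_{_{f,M}}$ runs over a larger index set, and the lower inequality comes from splitting a mixed-sign pair $u<0<v$ at the origin, noting that the longer of $[u,0]$, $[0,v]$ has length at least $\tfrac{1}{2}|v-u|\geq s/2$ and forms a same-sign admissible pair, and using the monotonicity of $f'$ to discard the other contribution. The only cosmetic point is that your reduction to $v\geq|u|$ is not literally achieved by swapping $u$ and $v$ (that would break the normalization $u<0<v$) but by the mirror-image argument on $[u,0]$, which you clearly have in hand.
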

\begin{proof}
The second inequality in~\eqref{Delta-comp} is an immediate consequence of the 
definitions~\eqref{delta-funct-def}, \eqref{delta+-funct-def}. Towards a proof of the first inequality in~\eqref{Delta-comp},
given $u\leq 0\leq v$, relying on the monotonicity of $f'$ we find
\begin{equation}
\label{Dhat-est1}
\begin{aligned}
\bigg| \frac{f'(v)-f'(u)}{v-u}  \bigg|& = ~
\frac{f'(v)-f'(0)}{v-u} +\frac{f'(0)-f'(u)}{v-u}
\\
\noalign{\smallskip}
&=~\left(\frac{v}{v-u}\right)\cdot  
\frac{f'(v)-f'(0)}{v}
+\left(\frac{-u}{v-u}\right)\cdot 
\frac{f'(0)-f'(u)}{-u}\,.
\end{aligned}
\end{equation}
Therefore, observing that $v-u\geq s$ implies $\max\{v,-u\}\geq s/2$, we deduce from~\eqref{Dhat-est1} that
for all $-M\leq u\leq 0\leq v\leq M$, such that $v-u\geq s$, there holds
\begin{equation}
\label{Dhat-est2}
\begin{aligned}
\bigg| \frac{f'(v)-f'(u)}{v-u}  \bigg|&\geq~ 
\begin{cases}
\dfrac{1}{2}\cdot  
\dfrac{f'(v)-f'(0)}{v}
\quad&\text{if}\qquad v=\max\{v,-u\}\,,
%
\\
\\
\dfrac{1}{2}\cdot  
\dfrac{f'(0)-f'(u)}{-u}
\quad&\text{if}\qquad -u=\max\{v,-u\}\,,
\end{cases}
\\
\noalign{\smallskip}
&\geq~\frac{1}{2}\cdot D(s/2)
\end{aligned}
\end{equation}
where
\begin{equation}
\label{D-def}
D(s)~\doteq~\inf_{\substack{\\
|u|,|v|\leq M,\,u\cdot v\geq 0 \ \\
|v-u|\geq s}}
\bigg|\frac{f'(v)-f'(u)}{v-u}\bigg|\,.
\end{equation}
Taking the infimum in the left-hand side of~\eqref{Dhat-est2} over all 
$-M\leq u\leq 0\leq v\leq M$ with $v-u\geq s$, we thus find
\begin{equation}
\label{Dhat-est3}
\widehat D(s)~\geq~  \frac{1}{2}\cdot D(s/2)
\qquad\forall~s>0\,,
\end{equation}
where
\begin{equation}
\label{Dhat-def}
\widehat D(s)~\doteq~\inf_{\substack{\\
|u|,|v|\leq M \ \\
|v-u|\geq s}}
\bigg|\frac{f'(v)-f'(u)}{v-u}\bigg|\,.
\end{equation}
Then, observing that by~\eqref{delta-funct-def}, \eqref{delta+-funct-def}, we have
\begin{equation}
\label{Delta-Deltahat-D-Dhat}
\Delta_{_{f,M}}(s)~=~s \cdot D(s),\qquad\qquad
\widehat\Delta_{_{f,M}}(s)~=~s \cdot \widehat D(s)\qquad\quad\forall~s>0\,,
\end{equation}
we recover from~\eqref{Dhat-est3} the first inequality in~\eqref{Delta-comp}.
\end{proof}

We are now ready to provide the:

\medskip

{\bf Proof of upper bound~\eqref{uest-c} of Theorem~\ref{ThmLUBC}}\\

By virtue of Lemma~\ref{entr-f'-sol-set}, given any
\begin{equation}
\label{eps-hyp-1}
0~<~\varepsilon< \big(1+2\, l_{[L,M,T]}\big) \cdot {\widehat\Delta}^{\strut -1}_{_{f,M}}\Big( \frac{\Gamma_{\!1}^+}{124}\Big),
\end{equation}
with $l_{[L,M,T]}$ as in~\eqref{LT-def}, and setting
\begin{equation}
\label{epsprime-def}
\varepsilon'~\doteq~  {\widehat\Delta}_{_{f,M}}\bigg(\frac{\varepsilon}{1+2\, l_{[L,M,T]}}\bigg)\,,
\end{equation}
there holds 
\begin{equation}
\mathcal{N}_{\varepsilon'}\Big(\mathcal{L}_{[L,M,T]}
\ | \ {\bf L}^{1}(\R)\Big)~\leq~2^{\Big(\frac{\Gamma_{\!1}^+}{\small\strut 2\, \varepsilon'}\Big)}  \,.
\end{equation}
%
Therefore, 
there exists a set of functions
\begin{equation}
\big\{g_1,\dots,g_p\big\}
~\subset~\mathcal{L}_{[L,M,T]}\,,
\end{equation}
with
\begin{equation}
\label{p-bound1}
p~\leq~\Bigg\lfloor2^{\Big(\frac{\Gamma_{\!1}^+}{\small\strut 2\,\varepsilon'}\Big)}\Bigg\rfloor+1\,,
\end{equation}
such that 
\begin{equation}
\label{derf-solset-incl-1}
\mathcal{L}_{[L,M,T]}~\subseteq~\bigcup_{i=1}^p B\big(g_i,\,\ve'\big)\,,
\end{equation}
where $B\big(g_i,\,\ve'\big)$ denotes the ${\bf L}^1(\R)$-ball centred at $g_i$ of radius $\ve'$.
Notice that, by Lemma~\ref{L1} and because of~\eqref{EqZeroSpeed}, we have 
\[
\mathcal{L}_{[L,M,T]}~\subseteq~{\mathcal C}_{[l_{[L,M,T]},f'_M]}\,.
\]
Hence~\eqref{derf-solset-incl-1} yields
\begin{equation}
\label{derf-solset-incl-2}
\mathcal{L}_{[L,M,T]}~\subseteq~{\mathcal C}_{[l_{[L,M,T]},f'_M]} \cap \bigcup_{i=1}^p B\big(g_i,\,\ve'\big)\,.
\end{equation}
On the other hand, observing that by~\eqref{EqZeroSpeed} one has
\begin{equation}
g~\in~{\mathcal C}_{[l_{[L,M,T]},f'_M]}\qquad\Longrightarrow\qquad (f')^{-1}\circ g~\in~{\mathcal C}_{[l_{[L,M,T]}, M]}\,,
\end{equation}
and because of~\eqref{epsprime-def}, invoking Lemma~\ref{inv-der-f-lip-est} we deduce that
for all $i=1,\dots,p$, there holds
\begin{equation}
\label{derf-solset-incl-3}
g~\in~ {\mathcal C}_{[l_{[L,M,T]},f'_M]}, \ \|g-g_i\|_{{\bf L}^1(\R)}~<~\ve' \qquad\Longrightarrow\quad
\big\|(f')^{-1}\circ g-(f')^{-1}\circ g_i\big\|_{{\bf L}^1(\R)}<\ve\,.
\end{equation}
Hence, we deduce from~\eqref{derf-solset-incl-2}, \eqref{derf-solset-incl-3} that
\begin{equation}
\label{derf-solset-incl-4}
\begin{aligned}
S_T({\mathcal C}_{[L,M]})&\subseteq~ \bigcup_{i=1}^p 
\Big\{(f')^{-1}\circ g \ \big| \ g\in  {\mathcal C}_{[l_{[L,M,T]},f'_M]}\cap B\big(g_i,\,\ve'\big)  \Big\}
\\
\noalign{\smallskip}
&\subseteq~
\bigcup_{i=1}^p B\big((f')^{-1}\circ g_i,\,\ve\big)\,.
\end{aligned}
\end{equation}
Thus, for all $\ve>0$ satisfying~\eqref{eps-hyp-1}, we have produced an $\ve$-cover of $S_T({\mathcal C}_{[L,M]})$ in ${\bf L}^1$ of cardinality $p$ which, thanks to~\eqref{Delta-comp}, \eqref{p-bound1}, is bounded by
\begin{equation}
\label{card-est-1}
p~\leq~1+ 2^{\Big(\frac{\Gamma_{\!1}^+}{\small\strut 2\,\varepsilon'}\Big)}
~\leq~ 2^{\Big(\frac{\Gamma_{\!1}^+}{\small\strut \varepsilon'}\Big)}
~=~ 2^{\Big(\frac{\Gamma_{\!1}^+}{\small\strut {\widehat\Delta}_{_{f,M}}(2\varepsilon/\gamma_1^+)}\Big)}
~\leq~ 2^{\Big(\frac{\Gamma_{\!1}^+}{\small\strut {\Delta}_{_{f,M}}(\varepsilon/\gamma_1^+)}\Big)}
\end{equation}
with $\gamma_1^+ \doteq 2(1+2\, l_{[L,M,T]})$ as in~\eqref{G1+-def} because of~\eqref{LT-def}.
Taking the base-2 logarithm in~\eqref{card-est-1} we then derive the estimate~\eqref{uest-c}.
\qed

\subsection{Fluxes with one inflection point having polynomial degeneracy} 
In this subsection we will assume that $f$ is a non convex function 
satisfying the assumption~{\bf (NC)} and~\eqref{EqZeroSpeed}.
To fix the ideas we shall consider the case where  $f^{(m+1)}(0)> 0$, the case with $f^{(m+1)}(0)< 0$
being entirely similar.
Therefore, throughout this subsection we shall assume that, for some even integer $m\in\N\setminus\{0\}$,
there holds
\begin{equation}
\label{hyp-NC2}
\begin{gathered}
f^{(j)}(0)~=~0\quad \text{for all} \quad j =1,\dots, m,\qquad f^{(m+1)}(0)~>~ 0\,,
\\
\noalign{\medskip}
f''(u)\cdot u ~>~0\qquad\forall~u\in \R\,\setminus \{0\}\,.
\end{gathered}
\end{equation}
This implies that  the function $f'$ is strictly decreasing on $(-\infty,0]$ and strictly increasing on~$[0,+\infty)$. Moreover,  $f'$ is positive on $\R\setminus\{0\}$.

Towards a proof of~\eqref{uest-nc}  we first establish some technical lemmas
concerning the flux $f$ and the  function $\Delta_{_{f,M}}$ defined in~\eqref{delta-funct-def},
and providing bounds on the ${\bf L}^1$-distance of two elements $u,v\in S_T(\mathcal{C}_{[L,M]})$ in terms
of the  ${\bf L}^1$-distance of $f'\circ u, f'\circ v$.

\begin{lemma}
\label{nc-flux-prop-1}
Let $f:\R\to\R$ be a smooth map satisfying the assumption~\eqref{hyp-NC2}.
%
%
For any $M>0$, there exist constants $\kappa_{M}\in (0,1)$, $\beta_M, \sigma_M>0$ depending only on $f$ and $M$, such that 
the following hold.
\begin{equation}
\label{der-f-est-1}
\begin{aligned}
\big|f'(u)-f'(u/2)\big|&~\geq~\kappa_M\cdot |f'(u)|
\\
\noalign{\smallskip}
\big|f'(u/2)\big|&~\geq~\kappa_M\cdot |f'(u)|
\end{aligned}
\qquad\quad\forall~u\in[-M,M]\,,
\end{equation}
\begin{equation}
\label{der-f-est-2}
\sup_{u\in [-M,M]\setminus\{0\}}~\left|\left\{{f(u)-f(0)\over uf'(u)} \right\}\right|~\leq~1-{\kappa_M\over 2}~<~1\,,
\end{equation}
\medskip
\begin{equation}
\label{der-f-est-3}
\frac{s^m}{\beta_M}~\leq~ \Delta_{_{f,M}}(s)
~\leq~ \beta_M\cdot {s^m}\qquad\quad\forall~s\in(0, \sigma_M]\,.
\end{equation}
%
\end{lemma}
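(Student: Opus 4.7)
All three estimates will be deduced from the Taylor expansions
\begin{equation*}
f'(u) \,=\, \frac{f^{(m+1)}(0)}{m!}\,u^m + o(u^m), \qquad f''(u) \,=\, \frac{f^{(m+1)}(0)}{(m-1)!}\,u^{m-1} + o(u^{m-1}), \qquad u\to 0,
\end{equation*}
which follow from~\eqref{hyp-NC2}, combined with compactness arguments on regions bounded away from $0$. Recall that under~\eqref{hyp-NC2} with $m$ even and $f^{(m+1)}(0)>0$ one has $f'>0$ on $\R\setminus\{0\}$, with $f'$ strictly increasing on $[0,M]$ and strictly decreasing on $[-M,0]$.

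For~\eqref{der-f-est-1}, the Taylor expansion of $f'$ yields $|f'(u/2)|/|f'(u)| \to 2^{-m}$ and $|f'(u)-f'(u/2)|/|f'(u)| \to 1 - 2^{-m}$ as $u\to 0$. Hence both ratios extend continuously to a neighborhood $[-\delta,\delta]\setminus\{0\}$ of $0$ and stay bounded below by positive constants there. On the complement $[-M,-\delta]\cup[\delta,M]$, the strict monotonicity of $f'$ on each side of $0$ together with $f'\neq 0$ off $0$ makes both ratios continuous and strictly positive, so they admit positive lower bounds by compactness. The minimum of these constants gives a suitable $\kappa_M\in(0,1)$.

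For~\eqref{der-f-est-2}, I would split the integral representation of $f(u)-f(0)$ at $u/2$. For $u>0$, monotonicity of $f'$ on $[0,M]$ gives
\begin{equation*}
f(u)-f(0) \,=\, \int_0^{u/2}f'(s)\,ds + \int_{u/2}^u f'(s)\,ds \,\leq\, \frac{u}{2}\bigl(f'(u/2)+f'(u)\bigr).
\end{equation*}
Dividing by $u f'(u)>0$ and inserting $f'(u/2)\leq (1-\kappa_M)\,f'(u)$ --- which is just the first line of~\eqref{der-f-est-1} rewritten using $0\leq f'(u/2)\leq f'(u)$ --- yields the bound $1-\kappa_M/2$. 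The case $u<0$ is symmetric: using the decreasing monotonicity of $f'$ on $[-M,0]$, the signs of $f(u)-f(0)$ and $uf'(u)$ cancel in the quotient, producing the same bound.

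For~\eqref{der-f-est-3}, the upper bound is immediate by choosing $u=0$, $v=s$ in the infimum defining $\Delta_{f,M}$ in~\eqref{delta-funct-def}, which gives $\Delta_{f,M}(s)\leq f'(s)\leq \beta_M\,s^m$ for $s\in(0,\sigma_M]$ small, by the Taylor expansion above. For the lower bound, the main step is to establish the uniform estimate
\begin{equation*}
f''(\xi)\,\geq\, c\,|\xi|^{m-1} \qquad \forall\,\xi\in[-M,M],
\end{equation*}
for some constant $c=c(f,M)>0$, patching the Taylor asymptotic of $f''$ near $0$ with the compactness and strict positivity of $\mathrm{sign}(\xi)\,f''(\xi)$ on the complement. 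Granted this, for $0\leq u<v\leq M$ with $v-u\geq s$, averaging produces
\begin{equation*}
\frac{f'(v)-f'(u)}{v-u} \,=\, \frac{1}{v-u}\int_u^v f''(\xi)\,d\xi \,\geq\, \frac{c}{m}\cdot\frac{v^m-u^m}{v-u} \,=\, \frac{c}{m}\sum_{j=0}^{m-1}v^{m-1-j}u^j \,\geq\, \frac{c}{m}\,v^{m-1} \,\geq\, \frac{c}{m}\,s^{m-1},
\end{equation*}
since $v\geq v-u\geq s$. The constraint $u\cdot v\geq 0$ in~\eqref{delta-funct-def} forces $u,v$ to lie on a single side of $0$, so the symmetric argument on $[-M,0]$ exhausts the infimum. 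Multiplying by $s$ gives $\Delta_{f,M}(s)\geq (c/m)\,s^m$. The main technical obstacle is the uniform pointwise bound on $f''$: the Taylor expansion only controls a neighborhood of $0$, and one needs to patch with the compactness estimate off $0$ to get the inequality globally on $[-M,M]$.
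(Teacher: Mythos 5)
Your argument is correct, and for the first two estimates it coincides with the paper's proof: \eqref{der-f-est-1} is obtained by combining the Taylor expansion of $f'$ at the origin (giving the limits $2^{-m}$ and $1-2^{-m}$ for the two ratios) with a compactness argument on $[-M,M]\setminus(-\delta,\delta)$, and \eqref{der-f-est-2} follows by splitting $\int_0^u f'$ at $u/2$ and inserting $f'(u/2)\le(1-\kappa_M)f'(u)$, exactly as in the paper. Where you genuinely diverge is in \eqref{der-f-est-3}. The paper proves a two-sided estimate on the quantity $D(s)$ of \eqref{D-def}: it shows $f'$ is convex on a neighbourhood $[-u_0',u_0']$ of the origin (via the Taylor expansion of $f^{(3)}$), identifies the infimum over that region as $\min\{f'(-s),f'(s)\}/s$, and then handles pairs straddling $u_0'$ by a case analysis before converting to powers of $s$ via Taylor. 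You instead prove the single pointwise bound $\mathrm{sign}(\xi)\,f''(\xi)\ge c\,|\xi|^{m-1}$ on $[-M,M]$ and integrate it over $[u,v]$, using $\frac{v^m-u^m}{v-u}\ge v^{m-1}\ge s^{m-1}$; together with the test pair $u=0$, $v=s$ for the upper bound, this is shorter and avoids both the convexity discussion and the mixed-pair case analysis, at the price of being specific to the polynomial degeneracy (the paper's intermediate estimate \eqref{D-est-1} is stated purely in terms of $f'$ and is reused in Remark~\ref{pol-c}). One small slip: your displayed uniform estimate reads $f''(\xi)\ge c\,|\xi|^{m-1}$ for all $\xi\in[-M,M]$, which is false for $\xi<0$ since there $f''(\xi)<0$ by \eqref{hyp-NC2}; the correct statement, which is what your patching argument actually yields and what your symmetric argument on $[-M,0]$ needs, is $\mathrm{sign}(\xi)\,f''(\xi)\ge c\,|\xi|^{m-1}$ (equivalently $|f''(\xi)|\ge c\,|\xi|^{m-1}$). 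Since you only apply the inequality on $[0,M]$ and invoke symmetry for the negative side, this is a misstatement rather than a gap, but it should be fixed.
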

\begin{proof}\hspace{1in}

\noindent
{\bf 1.} 
Observe first that, by the monotonicity property of $f'$ and since $f'$ is always non negative,
the inequalities in~\eqref{der-f-est-1} are equivalent to
\begin{equation}
\label{der-f-est-1b}
\begin{aligned}
f'(u)-f'(u/2)&~\geq~\kappa_M\cdot f'(u)
\\
\noalign{\smallskip}
f'(u/2)&~\geq~\kappa_M\cdot f'(u)
\end{aligned}
\qquad\quad\forall~u\in[-M,M]\,.
\end{equation}
Next, by writing a Taylor approximation of the derivative of the flux in the origin and relying on~\eqref{EqZeroSpeed}, 
we find
\begin{equation}
\label{der-f-est-4}
\begin{aligned}
\frac{f'(u)}{2}-f'(u/2)&=~\frac{f^{(m+1)}(0)}{m!} \Big(\frac{u^{m}}{2}-(u/2)^{m}\Big)+u^{m}\cdot o(1)
\\
\noalign{\smallskip}
&=~u^{m}\cdot \Bigg( \frac{f^{(m+1)}(0)}{m!}\Big((1/2)-(1/2)^{m}\Big)+o(1)\Bigg)\,,
\end{aligned}
\end{equation}
and
\begin{equation}
\label{der-f-est-4b}
\begin{aligned}
f'(u/2)-\frac{f'(u)}{2^{m+1}}&= \frac{f^{(m+1)}(0)}{m!} \Big((u/2)^{m}-\frac{1}{2}(u/2)^{m}\Big)+u^{m}\cdot o(1)
\\
\noalign{\smallskip}
&=u^{m}\cdot \Bigg( \frac{f^{(m+1)}(0)}{m!}\frac{1}{2^{m+1}}
+o(1)\Bigg)\,,
\end{aligned}
\end{equation}
where $o(1)$ denotes a function converging to zero when $u\to 0$. Since $f^{m+1}(0)> 0$
and $m$ is even,
we deduce from~\eqref{der-f-est-4} that there will be some constant $u_0>0$
such that
\begin{equation}
\label{der-f-est-5}
\begin{aligned}
f'(u)-f'(u/2) &\geq \frac{1}{2} f'(u) 
%
\\
\noalign{\smallskip}
f'(u/2) &\geq \frac{1}{2^{m+1}}f'(u) 
\end{aligned}
\qquad\quad\forall~u\in [-u_0, u_0]\,.
\end{equation}
On the other hand, setting
\begin{equation}
\label{c-prime-second-def}
c_0\doteq \inf_{u_0\leq |u|\leq M}
f'(u)-f'(u/2)\,,
\qquad\quad
c'_0\doteq \inf_{u_0\leq |u|\leq M}
f'(u/2)\,,
\qquad\quad
\widehat c_0\doteq \sup_{u_0\leq |u|\leq M}
f'(u)\,,
\end{equation}
we find
\begin{equation}
\label{der-f-est-1c}
\begin{aligned}
f'(u)-f'(u/2)~&\geq~\frac{c_0}{\widehat c_0}\cdot f'(u)
\\
\noalign{\smallskip}
f'(u/2)~&\geq~\frac{c'_0}{\widehat c_0}\cdot f'(u)
\end{aligned}
\qquad\quad\forall~u\in[-M,M]\setminus [-u_0, u_0]\,,
\end{equation}
where $c_0, c'_0, \widehat c_0$ are  positive constants since in~\eqref{c-prime-second-def} we are taking the infimum and the supremum
of positive continuous functions on a compact subset of $\R$. Hence, \eqref{der-f-est-5}, \eqref{der-f-est-1c} together
yield \eqref{der-f-est-1b} taking
\[
\kappa_M\doteq \min\bigg\{\frac{1}{2^{m+1}},\,\frac{c_0}{\widehat c_0},\,\frac{c'_0}{\widehat c_0}\bigg\}\,.
\]
\medskip

\noindent{\bf 2.}
Notice that condition~\eqref{der-f-est-1b} implies
\[
f'(u/2)~\leq~(1-\kappa_M)\cdot f'(u)\qquad\forall u~\in [-M,M]\,.
\] 
Hence, relying on the non negativity and monotonicity property  of $f'$,  for any $u\in  [-M,M]\setminus\{0\}$
we derive the estimate:
\begin{equation*}
\begin{aligned}
\left|{f(u)-f(0)\over u}\right|
&=~\left|{\ds \int^u_0}f'(s)~ds\over u\right|~\leq~\left|{\ds \int^{u/2}_0f'(s)~ds+\int^{u}_{u/2}}f'(s)~ds\over u\right|
\\
\noalign{\smallskip}
&\leq~\frac{1}{2}\Big(f'(u/2)+f'(u)\Big)
\leq~\left(1-{\kappa_M\over 2}\right)\cdot f'(u)\,,
\end{aligned}
\end{equation*}
which yields (\ref{der-f-est-2}).

\medskip

\noindent{\bf 3.}
In order to establish~\eqref{der-f-est-3}, it will be sufficient to show that
there exist constants $s_o, k_0>0$ such that
%
there holds
\begin{equation}
\label{D-est-1}
\qquad\quad
\frac{1}{k_0}\cdot \frac{\min\big\{f'(-s/2),\,f'(s/2)\big\}}{s/2}~\leq~D(s)~\leq~k_0\cdot \frac{\min\big\{f'(-s),\,f'(s)\big\}}{s}
\qquad\ \forall~s\in(0, s_0]\,,
\end{equation}
with $D$ as in~\eqref{D-def},
since then one recovers~\eqref{der-f-est-3} from~\eqref{D-est-1} 
recalling~\eqref{Delta-Deltahat-D-Dhat} and taking the Taylor expansion of $f'$
at zero.
\smallskip

Towards a proof of~\eqref{D-est-1}, observe first that by writing the 
Taylor expansion of $f^{(3)}$ at zero we find
\begin{equation}
\label{der-3-f-taylor}
f^{(3)}(u)~=~u^{m-2}\cdot \Bigg( \frac{f^{(m+1)}(0)}{(m-2)!}+o(1)\Bigg)
\end{equation}
where $o(1)$ denotes a function converging to zero when $u\to 0$. Since $f^{(m+1)}(0)> 0$
and $m$ is even,
we deduce from~\eqref{der-3-f-taylor} that there will be some constant $u'_0\in (0, M)$
such that 
\begin{equation*}
\label{der-3-f-taylor-2}
f^{(3)}(u)~\geq~0
\qquad\quad\forall~u\in [-u'_0, u'_0]\,,
\end{equation*}
which in turn implies that $f'$ is a convex map on $[-u'_0, u'_0]$.
Therefore, recalling that $f'(0)=0$, we deduce that
\begin{equation}
\label{D-prop1}
\inf_{\substack{\\
|u|,|v|\leq u'_0,\,u\cdot v\geq 0 \ \\
|v-u|\geq s}}
\bigg|\frac{f'(v)-f'(u)}{v-u}\bigg|
~=~\frac{\min\big\{f'(-s),\,f'(s)\big\}}{s}
\quad\qquad\forall~s\in (0,u'_0]\,.
\end{equation}
Since by definition~\eqref{D-def} we have
\begin{equation}
D(s)~\leq~  \inf_{\substack{\\
|u|,|v|~\leq~ u'_0,\,u\cdot v~\geq~ 0 \ \\
|v-u|\geq s}}
\bigg|\frac{f'(v)-f'(u)}{v-u}\bigg|\qquad\forall~s\,,
\end{equation}
we obtain from~\eqref{D-prop1} the upper bound in~\eqref{D-est-1} with $s_0=u'_0$, $k_0=1$.

Concerning the lower bound in~\eqref{D-est-1}, applying the mean-value theorem to $f'$ we find
\begin{equation}
\label{D-est-2}
\inf_{\substack{\\
u'_0\leq |u|,|v|\leq M\\ 
u\cdot v\geq 0\ \ }}
\bigg|\frac{f'(v)-f'(u)}{v-u}\bigg|~\geq~ 
c''_0
\end{equation}
where
\begin{equation}
\label{f-der-2-bound}
c''_0~\doteq~\inf_{u'_0\leq |u|\leq M} \big|f''(u)\big|\,.
\end{equation}
Here, $c''_0$ is a positive  constant since in~\eqref{f-der-2-bound} we are taking the infimum 
of a  continuous function on a compact subset of $\R\setminus\{0\}$,
which  is positive on $\R\setminus\{0\}$ because of~\eqref{hyp-NC2}.
On the other hand, observing that by~\eqref{hyp-NC2} we have $\lim_{s\to 0} \frac{f'(|s|)}{s}=f''(0)=0$,  
it follows that 
\begin{equation}
\label{D-est-3}
\frac{\max\big\{f'(-s),\,f'(s)\big\}}{s}~<~c''_0
\quad\qquad\forall~s\in (0,u''_0]\,,
\end{equation}
for some constant $u''_0\in (0, u'_0)$. Therefore, by virtue of~\eqref{D-prop1}, \eqref{D-est-2}, \eqref{D-est-3},  
we derive
\begin{equation}
\label{D-est-0}
D(s)=\min\left\{\frac{f'(-s)}{s},\,\frac{f'(s)}{s},\,  
\inf_{\substack{\\
|u|\leq u'_0\leq |v|\leq M \ \\
u\cdot v\geq 0,\ |v-u|\geq s}}
\bigg|\frac{f'(v)-f'(u)}{v-u}\bigg|
\right\}
\quad\qquad\forall~s\in (0,u''_0]\,.
\end{equation}
In order to provide a lower bound for
\begin{equation*}
\inf_{\substack{\\
|u|\leq u'_0\leq |v|\leq M \ \\
u\cdot v\geq 0,\ |v-u|\geq s}}
\bigg|\frac{f'(v)-f'(u)}{v-u}\bigg|
\end{equation*}
we shall consider the case where $0\leq u\leq u'_0\leq v\leq M$.
Relying on the monotonicity of $f'$ on~$[0,+\infty)$, on convexity of $f'$ on~$[-u'_0, u'_0]$ 
and on~\eqref{f-der-2-bound},we find
\begin{equation}
\label{D-est-4}
\begin{aligned}
\bigg| \frac{f'(v)-f'(u)}{v-u}  \bigg|&=~ \frac{f'(v)-f'(u'_0)}{v-u} +\frac{f'(u'_0)-f'(u)}{v-u}
\\
\noalign{\smallskip}
&\geq~
\left(\frac{v-u'_0}{v-u}\right)\cdot c''_0+ \left( \frac{u'_0-u}{v-u}\right)\cdot \frac{f'(u'_0-u)}{u'_0-u}\,.
\end{aligned}
\end{equation}
We now distinguish two cases:
\begin{itemize}
\item[(i)] If $u'_0-u>\frac{s}{2}$, then it follows from~\eqref{D-est-4} that
\begin{equation}
\label{D-est-5}
\bigg| \frac{f'(v)-f'(u)}{v-u}  \bigg|~\geq~\min
\left\{c''_0,\,  \frac{f'(u'_0-u)}{u'_0-u}\right\}~\geq~ 
\min
\left\{c''_0,\,  \frac{f'(s/2)}{s/2}\right\}\,.
\end{equation}

\item[(ii)] If $u'_0-u\leq \frac{s}{2}$, then one has $u'_0-u\leq  \frac{v-u}{2}$ which implies 
$\frac{v-u'_0}{v-u}\geq \frac{1}{2}$. Hence, we deduce from~\eqref{D-est-4} that
\begin{equation}
\label{D-est-6}
\bigg| \frac{f'(v)-f'(u)}{v-u}  \bigg|~\geq~
\frac{c''_0}{2}\,.
\end{equation}
\end{itemize}
Therefore, by virtue of~\eqref{D-est-3}, \eqref{D-est-5}, \eqref{D-est-6}, and relying again on the convexity of $f'$ on~$[-u'_0, u'_0]$,
we find
\begin{equation}
\label{D-est-7}
\inf_{\substack{\\
0\leq u\leq u'_0\leq v\leq M \ \\
|v-u|\geq s}}
\bigg|\frac{f'(v)-f'(u)}{v-u}\bigg|~\geq~\frac{1}{2}\cdot  \frac{f'(s/2)}{s/2}
 \quad\qquad\forall~s\in (0,u''_0]\,.
\end{equation}
The case where $-M\leq v\leq -u'_0\leq u\leq 0$ can be treated in an entirely similar way.
Hence, \eqref{D-est-0}, \eqref{D-est-7} together yield
the lower bound in~\eqref{D-est-1} with $s_0=u''_0$, $k_0=2$, thus completing the proof of the Lemma.
\end{proof}
\begin{remark}
\label{pol-c}
\rm
If we consider a smooth, convex flux satisfying the assumption~\eqref{hyp-C-pol},
with the same arguments of the proof of Lemma~\ref{nc-flux-prop-1} one can show that
the same type of lower bound in~\eqref{der-f-est-3} holds. In fact, assume to fix the ideas that 
$f^{(m+1)}(0)> 0$. Then, given $M>0$, relying on~~\eqref{hyp-C-pol}, \eqref{der-3-f-taylor} one deduces that there exist constants
$\widetilde u'_0>\widetilde u''_0>0$ such that:
\begin{itemize}
\item[(i)] $f'$ is a convex map on $[0, \widetilde u'_0]$
and a concave map on $[-\widetilde u'_0, 0]$;
\item[(ii)]
\begin{equation}
\label{D-est-2-c}
\inf_{\substack{\\
\widetilde u'_0\leq |u|,|v|\leq M\\ 
u\cdot v\geq 0\ \ }}
\bigg|\frac{f'(v)-f'(u)}{v-u}\bigg|~\geq~ 
\widetilde c''_0~\doteq~ 
\inf_{\widetilde u'_0\leq |u|\leq M} \big|f''(u)\big|>0\,;
\end{equation}
\item[(iii)]
\begin{equation}
\label{D-est-3-c}
\frac{\max\big\{|f'(-s)|,\,|f'(s)|\big\}}{s}~<~\widetilde c''_0
\quad\qquad\forall~s\in (0,\widetilde u''_0]\,.
\end{equation}
\end{itemize}
By virtue of (i), (ii), (iii), one then finds that
\begin{equation}
\label{D-est-0-c}
D(s)~=~\min\left\{\frac{|f'(-s)|}{s},\,\frac{|f'(s)|}{s},\,  
\inf_{\substack{\\
|u|\leq \widetilde u'_0\leq |v|\leq M \ \\
u\cdot v\geq 0,\ |v-u|\geq s}}
\bigg|\frac{f'(v)-f'(u)}{v-u}\bigg|
\right\}
\quad\qquad\forall~s\in (0,\widetilde u''_0]\,.
\end{equation}
where $D(s)$ is defined as in~\eqref{D-def}. On the other hand, 
relying on the monotonicity of $f'$ and on (i), (ii), (iii), we derive
as in the proof of  of Lemma~\ref{nc-flux-prop-1} that
\begin{equation}
\label{D-est-7-c}
\inf_{\substack{\\
0\leq u\leq \widetilde u'_0\leq v\leq M \ \\
|v-u|\geq s}}
\bigg|\frac{f'(v)-f'(u)}{v-u}\bigg|~\geq~\frac{1}{2}\cdot  \frac{|f'(s/2)|}{s/2}
 \quad\qquad\forall~s\in (0,\widetilde u''_0]\,.
\end{equation}
Thus, \eqref{D-est-0-c}, \eqref{D-est-7-c} together yield the lower bound
\begin{equation}
\Delta_{_{f,M}}(s)~\geq~\frac{s^m}{\alpha_M}
\quad\qquad\forall~s\in (0,\widetilde u''_0]\,,
\end{equation}
for some constant $\alpha_M>0$.
\end{remark}
\medskip
\begin{lemma}\label{k-est}
Let $f:\R\to\R$  be a smooth map satisfying the assumption~\eqref{hyp-NC2}.
Given any  $L,M,T>0$, for every $u\in S_T({\mathcal C}_{[L,M]})$,
and for any $x<y$ such that 
\begin{equation}
\label{sign-hyp}
\mathrm{sign} (u(x))~\neq~\mathrm{sign} (u(y)),
\end{equation}
there holds
\bel{TV-e}
TV\big\{f'\circ u\ \big|\  [x,y]\big\}~\geq~ \widetilde{\kappa}_M\cdot \max \big\{f'(u(x)), f'(u(y))\big\}\,,
\eeq
for some constant $\widetilde \kappa_M\in(0,1)$  depending only on $f$ and $M$.
\end{lemma}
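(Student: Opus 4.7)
The strategy is to locate the first transition of $u$ from positive to non-positive values on $[x,y]$ and quantify its contribution to $TV\{f'\circ u\,|\,[x,y]\}$. By the symmetry $u\mapsto -u$ (which maps \eqref{hyp-NC2} to its sign-flipped version, handled identically), we may assume $u(x+)>0>u(y-)$. Abbreviate $u_x\doteq u(x+)$, $u_y\doteq u(y-)$ and $g\doteq f'\circ u$. Since $u$ admits one-sided limits at every point (Remark~\ref{Rem:RightAndLeftLimits}), the function $z\mapsto u(z+)$ is right-continuous, so we may set $z^*\doteq\inf\{z\in(x,y]:u(z+)\leq 0\}\in(x,y]$. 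Right-continuity yields $u(z^*+)\leq 0$, and passing to the left-limit of positive values gives $u(z^*-)\geq 0$. Three sub-cases arise at $z^*$: (a) $u(z^*-)=0$; (b) $u(z^*-)>0$ and $u(z^*+)=0$; (c) $u(z^*-)>0$ and $u(z^*+)<0$.

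In sub-cases (a) and (b), the value $f'(0)=0$ is attained by $g$ as a one-sided limit at $z^*$. Using the elementary lower bound
\[
TV\{g\,|\,[x,y]\}\;\geq\;|g(x+)-g(z^*-)|+|g(z^*-)-g(z^*+)|+|g(z^*+)-g(y-)|
\]
together with the triangle inequality (applied to the terms involving the value $0$, using $g\geq 0$), one obtains $TV\{g\,|\,[x,y]\}\geq f'(u_x)+f'(u_y)\geq \max\{f'(u_x),f'(u_y)\}$, so~\eqref{TV-e} holds with any $\widetilde\kappa_M\leq 1$. In sub-case (c), writing $u^L\doteq u(z^*-)>0$ and $u^R\doteq u(z^*+)<0$, Lemma~\ref{L1} gives $|u^L|,|u^R|\leq M$, and $(u^L,u^R)$ forms an admissible shock with opposite-sign states. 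The same lower bound becomes
\[
TV\{g\,|\,[x,y]\}\;\geq\;|f'(u_x)-f'(u^L)|+|f'(u^L)-f'(u^R)|+|f'(u^R)-f'(u_y)|.
\]
The core ingredient is the following \emph{gap estimate}: there exists $\widetilde\kappa'\in(0,1)$, depending only on $f$ and $M$, such that $|f'(u^L)-f'(u^R)|\geq \widetilde\kappa'\max\{f'(u^L),f'(u^R)\}$ for every admissible shock with opposite-sign states in $[-M,M]$. Given this and the Lax inequality $f'(u^L)\geq f'(u^R)$ (which follows from the Oleinik E-condition), a short case analysis on the relative ordering of the four quantities $f'(u_x), f'(u^L), f'(u^R), f'(u_y)$ yields $TV\{g\,|\,[x,y]\}\geq \widetilde\kappa'\max\{f'(u_x),f'(u_y)\}$, concluding with $\widetilde\kappa_M:=\widetilde\kappa'$.

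The heart of the argument -- and the main obstacle -- is the gap estimate. Writing $a=u^L>0$ and $b=-u^R>0$, the Lax bound $f'(u^R)\leq\sigma$, with $\sigma=(f(a)-f(-b))/(a+b)$ the shock speed, gives $(a+b)f'(-b)\leq f(a)-f(-b)$. Combining the upper bound $|f(-b)|\leq (1-\kappa_M/2)\,b\,f'(-b)$ from \eqref{der-f-est-2} with the secant--tangent inequality $f(a)\leq a\,f'(a)$ (from convexity of $f$ on $[0,+\infty)$), one derives
\[
f'(-b)\;\leq\;\frac{1}{1+(\kappa_M/2)(b/a)}\,f'(a),
\]
which yields a uniform gap as soon as $b/a$ stays bounded below by a positive constant depending only on $f,M$. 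In the complementary regime $b/a\to 0$, one exploits the polynomial degeneracy \eqref{hyp-NC2} via the Taylor expansion $f'(u)\sim (f^{(m+1)}(0)/m!)\,u^m$ as $u\to 0$, together with the compactness bound $\inf_{a\in[a_0,M]}f'(a)>0$ for any fixed $a_0>0$, to show $f'(-b)/f'(a)$ stays uniformly bounded below $1$. A careful case split with thresholds depending only on $f$ and $M$ then produces a single uniform constant $\widetilde\kappa'$, completing the proof.
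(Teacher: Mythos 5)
Your proof is correct and follows essentially the same route as the paper's: isolate a zero-crossing of $u$ on $[x,y]$, handle a continuous crossing trivially because $f'(0)=0$, and for a jump crossing combine the Ole\v{\i}nik/Lax inequality with the flux estimate \eqref{der-f-est-2} to force a quantified drop of $f'\circ u$ across the shock, then bootstrap from one endpoint to the other. The only substantive divergence is inside your gap estimate: you apply \eqref{der-f-est-2} at the negative state $-b$, which yields $f'(-b)\le \big(1+(\kappa_M/2)\,b/a\big)^{-1}f'(a)$ and therefore degenerates as $b/a\to 0$, forcing your additional Taylor/compactness argument; the paper instead bounds $f(0)-f(-b)\le b\,f'(-b)$ by monotonicity alone and applies \eqref{der-f-est-2} at the positive state $a$, obtaining $f'(u^R)\le(1-\kappa_M/2)\,f'(u^L)$ uniformly and avoiding that regime split. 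Both versions are valid (your extra step is correctly justified by the polynomial degeneracy in \eqref{hyp-NC2}, in the spirit of Lemma~\ref{nc-flux-prop-1}); yours simply does a bit more work than necessary.
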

\begin{proof} 
Recalling that by Lemma~\ref{L1} we have $\big\|S_Tu_0\big\|_{{\bf L}^{\infty}(\R)}\leq M$,
we shall rely on~\eqref{der-f-est-1}, \eqref{der-f-est-2} to show first that, for any $x<y$ such that~\eqref{sign-hyp}
holds, one has
\bel{f-est}
TV\big\{f'\circ u\ \big| \  [x,y]\big\}~\geq~ {\kappa_M^2\over 2}\cdot f'(u(x))\,,
\eeq
$\kappa_M\in (0,1)$ being the constant provided by Lemma~\ref{nc-flux-prop-1}.
We will consider only the case where 
\begin{equation}
\label{sign-hyp-2}
u(y)~<~0~<~u (x),
\end{equation}
the other case with $u (x)< 0 < u (y)$ being  entirely similar. We distinguish two sub-cases:
\begin{itemize}
\item[(i)] If there exists $z\in (x,y]$ with $u(z-)\in \big[0,\frac{u(x)}{2}\big]$,
by virtue of~\eqref{der-f-est-1} 
and since $f'$ is increasing on $[0, +\infty)$, we find
\begin{equation}
\label{TV-e-2}
\begin{aligned}
TV\big\{f'\circ u\ \big| \  [x,y]\big\}
~&\geq~\big|f'(u(x)\big)-f'(u(z-))\big|\\
\noalign{\smallskip}
~&=~f'(u(x))-f'(u(z-))
\\
\noalign{\smallskip}
~&\geq~f'(u(x))-f'(u(x/2))~\geq~\kappa_M\cdot f'(u(x)))\
\end{aligned}
\end{equation}
proving~\eqref{f-est}.

\item[(ii)] Otherwise, because of~\eqref{sign-hyp-2}, $S_T u_0$ must admit an admissible discontinuity 
located at some point $z\in [x,y]$, such that  the left state $u(z-)\in \big[{u(x)\over 2},\,u(x)\big]$ and 
the right state $u(z+)< 0$.
In the particular cases where $z=x$ or $z=y$, it must be $u(x)=u(z-)$
and $u(y)=u(z+)$, respectively.
Thus, one has
\begin{equation}
\label{TV-e-3}
TV\big\{f'\circ u\ \big|\   [x,y]\big\}~\geq~\big|f'(u(z-)\big)-f'(u(z+))\big|\,.
\end{equation}
Notice that
the Ole\v{\i}nik E-condition~\eqref{E-cond} implies 
\begin{equation}
\label{Ol-cond}
f'(u(z-))~\geq~f'(u(z+))\,.
\end{equation}
%
Since $f'$ is decreasing on $(-\infty,0]$, 
we then  obtain 
\begin{equation*}
\begin{aligned}
f(u(z-))-f(0)~
&=~\int^{u(z-)}_{0}f'(s)~ds
\\
&\geq~f'(u(z-))\cdot u(z-)~\geq~f'(u(z+))\cdot u(z-)\,,
\end{aligned}
\end{equation*}
which yields
\[
{f(u(z-))-f(0)\over u(z-)}~\geq~~f'(u(z+))\,.
\]
Thanks to~\eqref{der-f-est-2}, we thus deduce
\[
f'(u(z+))~\leq~{f(u(z-))-f(0)\over u(z-)}~\leq~\left(1-{\kappa_M\over 2}\right)\cdot f'(u(z-))\,,
\]
which, relying on~\eqref{der-f-est-1}, implies 
\begin{equation}
\label{TV-e-4}
\begin{aligned}
\big|f'(u(z+))-f'(u(z-))\big|
~&=f'(u(z-))-f'(u(z+))
\\
\noalign{\smallskip}
~&\geq~{\kappa_M\over {2-\kappa_M}}\cdot f'(u(z-))
%
\\
\noalign{\smallskip}
~&\geq~{\kappa_M\over 2}\cdot f'(u(x)/2)
\\
\noalign{\smallskip}
~&\geq~{\kappa_M^2\over {2}}\cdot f'(u(x))
\end{aligned}
\end{equation}
since $u(z-)\geq u(x)/2$, and because $f'$ is increasing
on $[0,+\infty)$.
Hence, \eqref{TV-e-3}, \eqref{TV-e-4} together yield~\eqref{f-est}.
\end{itemize}
Observing that 
\[
TV\big\{f'\circ u\ \big| \  [x,y]\big\}~\geq~\big|f'(u(y))- f'(u(x))\big|
\]
we derive from~\eqref{f-est} that
\begin{equation}
\label{TV-e-5}
\begin{aligned}
\bigg(1+{2\over \kappa_M^2}\bigg)\cdot TV\big\{f'\circ u\ \big| \  [x,y]\big\}~&~\geq~\big|f'(u(y))- f'(u(x))\big|+\big| f'(u(x))\big|
\\
\noalign{\smallskip}
~&~\geq~f'(u(y))\,.
\end{aligned}
\end{equation}
Therefore, \eqref{TV-e-5} implies
\[
TV\big\{f'\circ u\ \big| \  [x,y]\big\}~\geq~{\kappa_M^2\over\kappa_M^2+2}\cdot f'(u(y))\,,
\]
which, together with~\eqref{f-est},  yields (\ref{TV-e}) with
\[
\widetilde{\kappa}_M~\doteq~{\kappa_M^2\over\kappa_M^2+2}\,.
\]
\end{proof}

\begin{lemma}
\label{nc-flux-prop-2}
Let $f:\R\to\R$ be a differentiable map.
Given any $L,M>0$, for every $u,v\in {\bf L}^\infty(\R)$ with 
\begin{equation}
\label{hyp-inv-der-f-lip-est-nc1}
\begin{gathered}
\|u\|_{{\bf L}^{\infty}}~\leq~M, \ \|v\|_{{\bf L}^{\infty}}~\leq~M,\qquad\qquad u(x)\cdot v(x) \geq 0\qquad\forall~x\in\R\,,
\\
\noalign{\smallskip}
Supp(u)~\subset~ [-L,L], \  Supp(v)~\subset~ [-L,L],
\end{gathered}
\end{equation}
there holds
\begin{equation}
\label{inv-der-f-lip-est-nc1}
\big\|u-v\big\|_{{\bf L}^1(\R)}
~\leq~(1+2L)\cdot {\Delta}^{\!\strut -1}_{_{f,M}}\Big(\big\|f'\circ u-f'\circ v\big\|_{{\bf L}^1(\R)}\Big),
\end{equation}
where $\Delta_{_{f,M}}$ is the map defined in~\eqref{delta-funct-def}.
\end{lemma}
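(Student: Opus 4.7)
The plan is to mimic the proof of Lemma~\ref{inv-der-f-lip-est} almost verbatim, exploiting the pointwise sign condition $u(x)\cdot v(x)\geq 0$ to replace $\widehat\Delta_{_{f,M}}$ by $\Delta_{_{f,M}}$ at every step. The sign condition is precisely what makes admissible the use of the sharper quantity $\Delta_{_{f,M}}$, in which the infimum is restricted to same-sign pairs.

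First I would record the analogue of~\eqref{inv-der-f-lip-est-0}: since $\Delta_{_{f,M}}$ takes the infimum in~\eqref{delta-funct-def} over a subset of the pairs considered in~\eqref{delta+-funct-def} (namely those with $u\cdot v\geq 0$), whenever $|u(x)|,|v(x)|\leq M$ and $u(x)\cdot v(x)\geq 0$ the specific pair $(u(x),v(x))$ lies in the defining family with $s=|u(x)-v(x)|$, so
\begin{equation*}
\Delta_{_{f,M}}\!\bigl(|u(x)-v(x)|\bigr)~\leq~|f'(u(x))-f'(v(x))|\,.
\end{equation*}
Moreover, as $s$ grows the constraint $|v-u|\geq s$ selects a smaller admissible set, so both $s\mapsto \Delta_{_{f,M}}(s)/s$ and $s\mapsto\Delta_{_{f,M}}(s)$ are non-decreasing; under the standing assumption {\bf(NC)} the latter is in fact strictly increasing and hence invertible in the relevant range.

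Next I would set
\begin{equation*}
\rho~\doteq~\Delta_{_{f,M}}^{\strut -1}\!\Bigl(\big\|f'\circ u-f'\circ v\big\|_{{\bf L}^1(\R)}\Bigr)
\end{equation*}
and establish the pointwise estimate
\begin{equation*}
|u(x)-v(x)|~\leq~\rho\cdot\max\!\left\{1,\ \frac{|f'(u(x))-f'(v(x))|}{\big\|f'\circ u-f'\circ v\big\|_{{\bf L}^1(\R)}}\right\}\qquad\forall~x\in\R,
\end{equation*}
by the same dichotomy as in the proof of Lemma~\ref{inv-der-f-lip-est}. When $|u(x)-v(x)|<\rho$ the first argument of the max is sufficient. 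When $|u(x)-v(x)|\geq\rho$, the monotonicity of $s\mapsto\Delta_{_{f,M}}(s)/s$ combined with the pointwise inequality of the previous paragraph gives
\begin{equation*}
|u(x)-v(x)|~\leq~\frac{|u(x)-v(x)|}{\Delta_{_{f,M}}(|u(x)-v(x)|)}\,|f'(u(x))-f'(v(x))|~\leq~\frac{\rho}{\Delta_{_{f,M}}(\rho)}\,|f'(u(x))-f'(v(x))|,
\end{equation*}
which, by the definition of $\rho$, is exactly the second term in the max.

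Finally, since by~\eqref{hyp-inv-der-f-lip-est-nc1} both $u$ and $v$ vanish outside $[-L,L]$, integrating the pointwise bound over $[-L,L]$ yields
\begin{equation*}
\|u-v\|_{{\bf L}^1(\R)}~\leq~\int_{-L}^{L}\!\rho\,dx+\rho\cdot\frac{\int_{-L}^{L}|f'(u(x))-f'(v(x))|\,dx}{\|f'\circ u-f'\circ v\|_{{\bf L}^1(\R)}}~\leq~2L\rho+\rho,
\end{equation*}
which is precisely~\eqref{inv-der-f-lip-est-nc1}. There is no substantive obstacle: the only structural point beyond a transcription of Lemma~\ref{inv-der-f-lip-est} is the invocation of the pointwise sign condition $u(x)\cdot v(x)\geq 0$ in the first step, which is exactly the hypothesis that upgrades $\widehat\Delta_{_{f,M}}$ to $\Delta_{_{f,M}}$ in the final bound.
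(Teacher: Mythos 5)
Your proof is correct and follows essentially the same route as the paper, which itself proves this lemma by observing that the same-sign hypothesis yields $\Delta_{_{f,M}}(|u(x)-v(x)|)\leq |f'(u(x))-f'(v(x))|$ pointwise and then repeating the argument of Lemma~\ref{inv-der-f-lip-est} with $\widehat\Delta_{_{f,M}}$ replaced by $\Delta_{_{f,M}}$. The dichotomy, the monotonicity of $s\mapsto\Delta_{_{f,M}}(s)/s$, and the final integration over $[-L,L]$ all match the paper's proof.
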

\begin{proof}
Observe that $s\mapsto \Delta_{_{f,M}}(s)$, $s\mapsto \frac{\Delta_{_{f,M}}(s)}{s}$ are strictly increasing maps
and that there holds
\begin{equation}
\label{inv-der-f-lip-est-2}
\hspace{1in}\Delta_{_{f,M}}\big(|u-v|\big)~\leq~|f'(u)-f'(v)|
\quad\qquad\forall~u,v\in\R\quad\text{s.t.}\quad |u|, |v|\leq M,
\quad u\cdot v\geq 0\,.
\end{equation}
Then, the estimate~\eqref{inv-der-f-lip-est-nc1} can be obtained with the same arguments of the proof of
Lemma~\ref{inv-der-f-lip-est} replacing $\widehat\Delta_{_{f,M}}$ with $\Delta_{_{f,M}}$
since, by assumption, $u(x)$ and $v(x)$ have the same sign for all $x\in\R$. 
\end{proof}
\medskip
The next lemma provides an estimate of the ${\bf L}^1$-distance between a given element $u\in S_T({\mathcal C}_{[L,M]})$
and its projection on the space of piecewise constant functions defined as follows. 
Fix $N\in\N$, letting $ l_{[L,M,T]}$ be the constant in~\eqref{LT-def}, set
\begin{equation}
\label{xnu-def}
x_{\nu}~\doteq~ -l_{[L,M,T]}+ {2\,  l_{[L,M,T]}\over N}\cdot\nu\,,
\qquad\quad
\nu\in \{0,1,..., N\}\,,
\end{equation}
and define (recalling from Remark~\ref{Rem:RightAndLeftLimits} that $u$ admits one-sided limits at each point $x$)
\begin{equation}
\label{proj-def}
\mathcal{P}^N(u)(x)~\doteq~ 
\begin{cases}
u(x_\nu^+)\ \quad\forall~x\in [x_\nu, x_{\nu+1})\quad&\text{if}\qquad x\in\big[\!-\!l_{[L,M,T]},\,l_{[L,M,T]}\big)\,,
\\
\noalign{\smallskip}
\ \  0\qquad&\text{otherwise.}
\end{cases}
\end{equation}
We shall express the ${\bf L}^1$-distance between $u\in S_T({\mathcal C}_{[L,M]})$ and $\mathcal{P}^N(u)$ in terms of $ TV \big\{f'\circ u\big\}$
which, in turn, admits an a-priori bound provided by Lemma~\ref{L2}.
\begin{lemma}
\label{nc-flux-prop-3}
Let $f:\R\to\R$ be a smooth map satisfying the assumption~ \eqref{hyp-NC2}.
Given any  $L,M,T>0$, for every $u\in S_T({\mathcal C}_{[L,M]})$,
and for any $N\in\N$, 
there holds
%
\begin{align}
\label{inv-der-f-lip-est-nc3}
\Big\|f'\circ u-f'\circ \mathcal{P}^N(u)\Big\|_{{\bf L}^1(\R)}
&\leq~\frac{2\,l_{[L,M,T]}\cdot TV \big\{f'\circ u\big\}}{N}\,,
\\
\noalign{\medskip}
\label{inv-der-f-lip-est-nc2}
\Big\|u-\mathcal{P}^N(u)\Big\|_{{\bf L}^1(\R)}
&\leq
4\,l_{[L,M,T]}\cdot \left(2+{TV \big\{f'\circ u\big\}\over \widetilde{\kappa}_M}\right)
\cdot {\Delta}^{\!\strut -1}_{_{f,M}}\Big(\frac{1}{N}\Big)
\end{align}
%
where $\widetilde{\kappa}_M$ is the constant provided by Lemma~\ref{k-est}.
\end{lemma}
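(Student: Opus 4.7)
Estimate \eqref{inv-der-f-lip-est-nc3} follows from a direct computation. On each $I_\nu \doteq [x_\nu, x_{\nu+1})$ the function $\mathcal{P}^N(u)$ is the constant $u(x_\nu^+)$, so $|f'(u(x)) - f'(\mathcal{P}^N(u)(x))| \leq TV\{f'\circ u\mid I_\nu\}$ for every $x \in I_\nu$; integration over $I_\nu$ of length $2l_{[L,M,T]}/N$ together with the telescoping bound $\sum_\nu TV\{f'\circ u\mid I_\nu\}\leq TV\{f'\circ u\}$ yields \eqref{inv-der-f-lip-est-nc3}.

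The proof of \eqref{inv-der-f-lip-est-nc2} is more delicate because the sign-preservation hypothesis of Lemma~\ref{nc-flux-prop-2} fails on any $I_\nu$ where $u$ changes sign. I would partition $\{0,\dots,N-1\}$ into good indices $G$ (for which $u$ has constant sign on $I_\nu$, and hence $u\cdot\mathcal{P}^N(u)\geq 0$ pointwise on $I_\nu$) and bad indices $B$. On $A\doteq \bigcup_{\nu\in G}I_\nu$ the argument of Lemma~\ref{nc-flux-prop-2} applies essentially verbatim (its proof goes through on any measurable set where the two functions share sign), yielding
\[
\|u - \mathcal{P}^N(u)\|_{\L^1(A)}\leq (1+|A|)\cdot\Delta_{_{f,M}}^{-1}\bigl(\|f'\circ u - f'\circ \mathcal{P}^N(u)\|_{\L^1(A)}\bigr).
\]
Combining with \eqref{inv-der-f-lip-est-nc3}, the monotonicity of $\Delta_{_{f,M}}^{-1}$, and $|A|\leq 2l_{[L,M,T]}$ produces the ``$2$''-term of the factor $2+TV\{f'\circ u\}/\widetilde{\kappa}_M$.

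On a bad interval, choosing $y^{\pm}_\nu\in I_\nu$ realising the positive maximum and negative minimum of $u$, Lemma~\ref{k-est} applied to the pair $(y^+_\nu,y^-_\nu)$ gives $TV\{f'\circ u\mid I_\nu\}\geq \widetilde{\kappa}_M \cdot \max_{x\in I_\nu} f'(u(x))$. The pointwise inequality $\Delta_{_{f,M}}(|w|)\leq f'(w)$ — obtained by taking $(0,w)$ in the infimum defining $\Delta_{_{f,M}}$, using \eqref{EqZeroSpeed} and the positivity of $f'$ off the origin under \eqref{hyp-NC2} — then yields $\sup_{I_\nu}|u|$ and $\sup_{I_\nu}|\mathcal{P}^N(u)|$ bounded by $\Delta_{_{f,M}}^{-1}(TV_\nu/\widetilde{\kappa}_M)$, whence
\[
\|u-\mathcal{P}^N(u)\|_{\L^1(\cup_B I_\nu)}\leq \frac{4\,l_{[L,M,T]}}{N}\sum_{\nu\in B}\Delta_{_{f,M}}^{-1}(TV_\nu/\widetilde{\kappa}_M).
\]
The concavity of $\Delta_{_{f,M}}^{-1}$, a consequence of the polynomial degeneracy in Lemma~\ref{nc-flux-prop-1} (which makes $\Delta_{_{f,M}}^{-1}(t)$ behave like $t^{1/m}$), together with $\sum_B TV_\nu \leq TV\{f'\circ u\}$ and $|B|\leq N$, dominates this sum by a constant multiple of $(TV\{f'\circ u\}/\widetilde{\kappa}_M)\cdot \Delta_{_{f,M}}^{-1}(1/N)$, producing the second term in the claimed bound. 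I expect the main obstacle to be exactly this last step: converting a sum of $|B|$ concave evaluations of $\Delta_{_{f,M}}^{-1}$ into a product with a single factor $\Delta_{_{f,M}}^{-1}(1/N)$ but linear dependence on $TV\{f'\circ u\}/\widetilde{\kappa}_M$ requires carefully exploiting the concavity/subadditivity of $\Delta_{_{f,M}}^{-1}$ and the uniform bound on the number of bad intervals.
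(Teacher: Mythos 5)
Your plan for \eqref{inv-der-f-lip-est-nc3} coincides with the paper's proof. For \eqref{inv-der-f-lip-est-nc2} you take a genuinely different route: the paper works pointwise with the single scale $\rho=\Delta^{-1}_{_{f,M}}(1/N)$ and splits into three cases at each $x$ (same sign as $u(x_\nu)$; opposite sign with difference quotient $\geq \widetilde\kappa_M/(2\rho N)$; opposite sign with small difference quotient, which is handled via the conjugate point $\pi(u)$ of \eqref{pi-u-def} and the monotonicity of $s\mapsto\Delta_{_{f,M}}(s)/s$), whereas you split the grid intervals into sign-constant and sign-changing ones, recycle Lemma~\ref{nc-flux-prop-2} on the former, and on the latter bound $\sup_{I_\nu}|u|$ directly by $\Delta^{-1}_{_{f,M}}(TV_\nu/\widetilde\kappa_M)$ via Lemma~\ref{k-est} and the elementary inequality $\Delta_{_{f,M}}(|w|)\leq f'(w)$ (take $(0,w)$ in the infimum). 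Your treatment of the bad intervals is correct and arguably cleaner than the paper's case (ii); the resummation works, but the property you need is not concavity of $\Delta^{-1}_{_{f,M}}$ (which Lemma~\ref{nc-flux-prop-1} does not give you: two-sided bounds $s^m/\beta_M\leq\Delta_{_{f,M}}(s)\leq\beta_M s^m$ near $0$ do not imply concavity of the inverse). What you actually need is $\Delta^{-1}_{_{f,M}}(a\,t)\leq a\,\Delta^{-1}_{_{f,M}}(t)$ for $a\geq 1$, and this follows directly from the fact that $s\mapsto\Delta_{_{f,M}}(s)/s$ is nondecreasing (the constraint set in \eqref{delta-funct-def} shrinks as $s$ grows), a fact already used in the proofs of Lemmas~\ref{inv-der-f-lip-est} and~\ref{nc-flux-prop-2}. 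With that substitution the bad-interval contribution is bounded by $4\,l_{[L,M,T]}\bigl(1+TV\{f'\circ u\}/\widetilde\kappa_M\bigr)\cdot\Delta^{-1}_{_{f,M}}(1/N)$, which fits the claimed constant.

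The one step that does not deliver what you claim is the good-interval estimate. Applying the integrated Lemma~\ref{nc-flux-prop-2} on $A$ gives $(1+|A|)\cdot\Delta^{-1}_{_{f,M}}\bigl(2\,l_{[L,M,T]}\,TV\{f'\circ u\}/N\bigr)$, and after converting to $\Delta^{-1}_{_{f,M}}(1/N)$ by the sublinearity above this is of size $\approx(1+2\,l_{[L,M,T]})\bigl(1+2\,l_{[L,M,T]}\,TV\{f'\circ u\}\bigr)\cdot\Delta^{-1}_{_{f,M}}(1/N)$: it is \emph{not} the ``$2$''-term of the factor $2+TV\{f'\circ u\}/\widetilde\kappa_M$, it carries the full $TV$ dependence and an extra factor of $l_{[L,M,T]}$ relative to the stated bound. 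This still proves an estimate of the required form $C\cdot\Delta^{-1}_{_{f,M}}(1/N)$ with $C$ polynomial in $L,T,TV$, which suffices for Theorem~\ref{ThmLUBNC} after enlarging $\Gamma_2^+$, but not the lemma with its stated constant. To recover the paper's constant, replace the integrated lemma by its pointwise core: with $\rho\doteq\Delta^{-1}_{_{f,M}}(1/N)$, on a sign-constant interval either $|u(x)-\mathcal{P}^N(u)(x)|\leq\rho$ or $|u(x)-\mathcal{P}^N(u)(x)|\leq\rho\,N\,|f'(u(x))-f'(\mathcal{P}^N(u)(x))|\leq\rho\,N\cdot TV\{f'\circ u\,|\,I_\nu\}$; integrating over $I_\nu$ and summing the interval-wise total variations then yields exactly the contribution $2\,l_{[L,M,T]}\,\rho\,\bigl(1+TV\{f'\circ u\}\bigr)$, which combines with the bad-interval part to give \eqref{inv-der-f-lip-est-nc2}.
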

\begin{proof}\hspace{1in}

\noindent
{\bf 1.} Observe first that by definition~\eqref{proj-def} there holds
\begin{equation}
\label{proj-est-1}
\Big|f'\big(u(x)\big)-f'\big(\mathcal{P}^N(u)(x)\big)\Big|~\leq~TV \big\{f'\circ u\ \big| \ [x_\nu, x_{\nu+1})\big\}\qquad\quad\forall~x\in [x_\nu, x_{\nu+1})\,, 
\end{equation}
for all $\nu\in \{0,1,..., N-1\}$. Hence, since  by~\eqref{linf-supp-nbounds-1}, \eqref{proj-def} one has $u= \mathcal{P}^N(u)=0$ on $\R\setminus [-l_{[L,M,T]}, \, l_{[L,M,T]}]$, 
we derive
\begin{equation}
\begin{aligned}
\Big\|f'\circ u-f'\circ \mathcal{P}^N(u)\Big\|_{{\bf L}^1(\R)}
&=~\int_{-l_{[L,M,T]}}^{-l_{[L,M,T]}} \Big|f'\big(u(x)\big)-f'\big(\mathcal{P}^N(u)(x)\big)\Big||dx
\\
\noalign{\smallskip}
&\leq~ {2\,  l_{[L,M,T]}\over N}\cdot \sum_{\nu=0}^{N-1} TV \big\{f'\circ u\ \big| \ [x_\nu, x_{\nu+1})\big\}
\\
\noalign{\smallskip}
&=~ {2\,  l_{[L,M,T]}\over N}\cdot TV \big\{f'\circ u\big\}\,,
\end{aligned}
\end{equation}
proving~\eqref{inv-der-f-lip-est-nc3}.
\medskip

{\bf 2.} Towards a proof of~\eqref{inv-der-f-lip-est-nc2}, we first show that, setting 
\begin{equation}
\label{ro-def-2}
\rho~\doteq~{\Delta}^{\!\strut -1}_{_{f,M}}\Big(\frac{1}{N}\Big),
\end{equation}
one has
\begin{equation}
\label{E2}
\qquad
\big|u(x)-
\mathcal{P}^N(u)(x)
\big|~\leq~ 2\, \rho \cdot \max\left\{2,\ \frac{N\cdot TV \big\{f'\circ u\ \big| \ [x_\nu, x_{\nu+1})\big\}}
{\widetilde\kappa_M}
\right\}\qquad\forall~x\in [x_\nu, x_{\nu+1}),
\end{equation}
for all $\nu\in \{0,1,..., N-1\}$.

Indeed, in the case where $u(x)$ and $\mathcal{P}^N(u)(x)=u(x_\nu)$, $x\in[x_\nu, x_{\nu+1})$, have the same sign, 
relying on~\eqref{inv-der-f-lip-est-2} and recalling that by~\eqref{linf-supp-nbounds-1}, \eqref{proj-def} we have $|u(x)|, |\mathcal{P}^N(u)(x)|\leq M$,
with the same arguments of the proof of 
Lemma~\ref{inv-der-f-lip-est}, replacing the definition of $\rho$ in~\eqref{ro-def} with~\eqref{ro-def-2} 
one obtains the estimate
\begin{equation}
\label{E3}
\big|u(x)-\mathcal{P}^N(u)(x)\big|~\leq~\rho\cdot\max\bigg\{1,\ N\cdot \Big|f'\big(u(x)\big)-f'\big(\mathcal{P}^N(u)(x)\big)\Big|
\bigg\}\qquad\forall~x\in\R\,.
\end{equation}
From~\eqref{E3} we immediately recover~\eqref{E2} because of~\eqref{proj-est-1} and since $\widetilde\kappa_M<1$.
\medskip

On the other hand, if $u(x)$ and $\mathcal{P}^N(u)(x)$, have different signs
and we assume that 
\begin{equation}
\frac{\big|f'(u(x))-f'(u(x_\nu))\big|}{\big|u(x)-u(x_\nu)\big|}~\geq~ 
\frac{\widetilde\kappa_M
}{2\,\rho\cdot N}\,,
\end{equation}
it follows 
\begin{equation}
\label{E4}
\begin{aligned}
\big|u(x)-u(x_\nu)\big|&
\leq~\frac{2\,\rho\cdot N\,\big|f'(u(x))-f'(u(x_\nu))\big|}{\widetilde\kappa_M}
%
\\
\noalign{\smallskip}
&\leq~\frac{2\,\rho\cdot N\cdot TV \big\{f'\circ u\ \big| \ [x_\nu, x_{\nu+1})\big\}}{\widetilde\kappa_M}
\end{aligned}
\end{equation}
which proves~\eqref{E2}.
\medskip

Therefore, it remains to consider the case where $u(x)$ and $\mathcal{P}^N(u)(x)=u(x_\nu)$, $x\in[x_\nu, x_{\nu+1})$, have different signs and there holds
\begin{equation}
\label{hyp-bound-osc-der-f}
\frac{\big|f'(u(x))-f'(u(x_\nu))\big|}{\big|u(x)-u(x_\nu)\big|}~<~
\frac{\widetilde\kappa_M}
{2\,\rho\cdot N}\,.
\end{equation}
Since $u(x)$, $u(x_\nu)$ have opposite signs,
one has
\begin{equation}
\label{diff-sign}
\big|u(x)-\mathcal{P}^N(u)(x)\big|~=~
\big|u(x)-u(x_\nu)\big|~=~\big|u(x)\big|+\big|u(x_\nu)\big|\,.
\end{equation}
Moreover, by Lemma~\ref{k-est} there holds
\begin{equation}
\label{tv-der-f-1}
TV\big\{f'\circ u\ \big|\  [x_\nu,\,x_{\nu+1})\big\}~\geq~ \widetilde{\kappa}_M\cdot \max \big\{f'(u(x)), f(u(x_\nu))\big\}\,.
\end{equation}
We now denote by $\pi(u)$, 
$u \in \R \setminus \left\{0\right\}$,
the unique point in $\R$ such that
\begin{equation}
 \label{pi-u-def}
 f'(u)=f'(\pi(u))
 \qquad \textrm{ and } \qquad
   \pi(u) \ne u
 \,,
\end{equation}
while we set  $\pi(0) \doteq 0$,
and we distinguish two sub-cases:
\begin{itemize}
\item[(i)] Assume that 
\begin{equation}
\label{hypot-i}
\max\big\{|u(x)|,\, |\pi(u(x))|\big\}~\geq~ \rho,\qquad\quad\max\big\{|u(x_\nu)|,\,|\pi(u(x_\nu))|\big\}~\geq~ \rho.
\end{equation}
Then,
recalling definition~\eqref{delta-funct-def} and that $f'(0)=0$,
and relying on the monotonicity of the map $s\to \frac{\Delta_{_{f,M}}(s)}{s}$,
we derive
\begin{equation}
\label{bound-i-1}
\begin{aligned}
\frac{f'(u(x))}{\max\big\{|u(x)|,\, |\pi(u(x))|\big\}}
&\geq~ 
\frac{{\Delta}_{_{f,M}}\big(\max\big\{|u(x)|,\, |\pi(u(x))|\big\}\big)}{\max\big\{|u(x)|,\, |\pi(u(x))|\big\}}~\geq~ \frac{{\Delta}_{_{f,M}}(\rho)}{\rho}\,,
\\
\noalign{\medskip}
\frac{f'(u(x_\nu))}{\max\big\{|u(x_\nu)|,\, |\pi(u(x_\nu))|\big\}}
&\geq~ 
\frac{{\Delta}_{_{f,M}}\big(\max\big\{|u(x_\nu)|,\, |\pi(u(x_\nu))|\big\}\big)}{\max\big\{|u(x_\nu)|,\, |\pi(u(x_\nu))|\big\}}~\geq~ \frac{{\Delta}_{_{f,M}}(\rho)}{\rho}\,.
\end{aligned}
\end{equation}
Hence, by virtue of~\eqref{ro-def-2}, \eqref{tv-der-f-1}, \eqref{bound-i-1},  we deduce
\begin{equation}
\label{bound-i-3}
\begin{aligned}
|u(x)|&\leq~\rho\cdot N\cdot f'(u(x))
~\leq~\frac{\rho\cdot N\cdot TV\big\{f'\circ u\ \big|\  [x_\nu,\,x_{\nu+1})\big\}}
{\widetilde{\kappa}_M}\,,
%
\\
\noalign{\medskip}
|u(x_\nu)|&\leq~\rho\cdot N\cdot  f'(u(x_\nu))
~\leq~\frac{\rho\cdot N\cdot TV\big\{f'\circ u\ \big|\  [x_\nu,\,x_{\nu+1})\big\}}
{\widetilde{\kappa}_M}\,,
\end{aligned}
\end{equation}
which, together with~\eqref{diff-sign}, yield~\eqref{E2}.

\item[(ii)] If~\eqref{hypot-i} is not verified and~\eqref{hyp-bound-osc-der-f} holds,
we claim that 
\begin{equation}
\label{hypot-ii}
|u(x)|~\leq~2\rho,\qquad\quad |u(x_\nu)|~\leq~ 2\rho,
\end{equation}
which, because of~\eqref{diff-sign} implies
\[
|u(x)-u(x_{\nu})|~\leq~4\,\rho\,,
\] 
proving~\eqref{E2}.
In fact, if~\eqref{hypot-i}, \eqref{hypot-ii} are not verified, then it must be
\begin{equation}
\begin{aligned}
&\min\Big\{\max\big\{|u(x)|,\, |\pi(u(x))|\big\}, \  \max\big\{|u(x_\nu)|,\,|\pi(u(x_\nu))|\big\}\Big\}~<~\rho\,,
\\
\noalign{\smallskip}
&\max\big\{|u(x)|,\,|u(x_\nu)|\big\}>2\rho\,.
\end{aligned}
\end{equation}
Let us assume that 
\begin{equation}
\label{hypot-ii-2}
\max\big\{|u(x)|,\, |\pi(u(x))|\big\}~<~\rho\,,\qquad\quad  |u(x_\nu)|~>~2\rho\,,
\end{equation}
(the other case $ \max\big\{|u(x_\nu)|,\,|\pi(u(x_\nu))|\big\}< \rho$, $|u(x)|> 2\rho$
being entirely similar).  In this case, by~\eqref{diff-sign} and since
$f'$ is decreasing on $(-\infty,0]$ and  increasing on~$[0,+\infty)$,
we have 
\begin{equation}
\label{diff-sign-2}
|u(x)-u(x_{\nu})|~\leq~2|u(x_{\nu})|\,,
\qquad\quad
f'(u(x_\nu))~>~f'\left(\frac{u(x_{\nu})}{2}\right)~\geq~f'(u(x))\,.
\end{equation}
Thus, relying on~\eqref{ro-def-2}, \eqref{hyp-bound-osc-der-f}, \eqref{diff-sign-2}, we find
\begin{equation}
\label{bound-ii-2}
\begin{aligned}
\frac{{\Delta}_{_{f,M}}(\rho)}
{\rho} &=~\frac{1}{N\cdot\rho}
%
\\
\noalign{\smallskip}
&>~ \frac{2\cdot \big|f'(u(x))-f'(u(x_\nu))\big|}{\widetilde\kappa_M\cdot\big|u(x)-u(x_\nu)\big|}
~=~ \frac{2\cdot\big(f'(u(x_\nu))-f'(u(x))\big)}{\widetilde\kappa_M\cdot\big|u(x)-u(x_\nu)\big|}
\\
\noalign{\smallskip}
&\geq~ \frac{2\cdot\big(f'(u(x_\nu))-f'(u(x_\nu)/2)\big)}{\widetilde\kappa_M\cdot\big|u(x)-u(x_\nu)\big|}
~\geq~ \frac{f'(u(x_\nu))}{|u(x_{\nu})|}
\\
\noalign{\smallskip}
&\geq~ \frac{{\Delta}_{_{f,M}}(|u(x_{\nu})|)}
{|u(x_{\nu})|}\,.
\end{aligned}
\end{equation}
The increasing property of $s\to \frac{\Delta_{_{f,M}}(s)}{s}$ 
together with~\eqref{bound-ii-2} then implies $|u(x_{\nu})|\leq \rho$ which yields a contradiction
with~\eqref{hypot-ii-2}.  Thus, the bounds in~\eqref{hypot-ii} hold and the proof of~\eqref{E2} is complete.
\end{itemize}
\medskip

{\bf 3.} Since  by~\eqref{linf-supp-nbounds-1}, \eqref{proj-def} one has $u= \mathcal{P}^N(u)=0$ on $\R\setminus [-l_{[L,M,T]}, \, l_{[L,M,T]}]$, relying on~\eqref{E2} we find
\begin{equation}
\begin{aligned}
\Big\|u-\mathcal{P}^N(u)\Big\|_{{\bf L}^1(\R)}
&\leq~ \sum_{\nu=0}^{N-1} \Big\|u-\mathcal{P}^N(u)\Big\|_{{\bf L}^1([x_\nu,\,x_{\nu+1}])}
\\
\noalign{\smallskip}
&\leq~ \frac{2\,  l_{[L,M,T]}}{N}\cdot \sum_{\nu=0}^{N-1} \sup_{\ x\in[x_\nu,\,x_{\nu+1})} 
\big|u(x)-\mathcal{P}^N(u)(x)\big|
\\
\noalign{\smallskip}
&\leq~ 8\,l_{[L,M,T]}\cdot \rho~+\frac{4\,l_{[L,M,T]}\cdot \rho}{\widetilde\kappa_M}\cdot \sum_{\nu=0}^{N-1} TV \big\{f'\circ u\ \big| \ [x_\nu, x_{\nu+1})\big\},
\end{aligned}
\end{equation}
which yields~\eqref{inv-der-f-lip-est-nc2}.
\end{proof}
We are now ready to provide the:

\medskip

{\bf Proof of upper bound~\eqref{uest-nc} of Theorem~\ref{ThmLUBNC}}\\

By virtue of Lemma~\ref{entr-f'-sol-set}, given any 
\begin{equation}
\label{eps-hyp-2}
0~<~\varepsilon~<~ \big(2+4\, l_{[L,M,T]}\big) \cdot {\Delta}^{\strut -1}_{_{f,M}}\Big( \frac{\Gamma_{\!1}^+}{144}\Big),
\end{equation}
with $l_{[L,M,T]}$ as in~\eqref{LT-def}, and setting
\begin{equation}
\label{epsprime-def-2}
\varepsilon'~\doteq~ \frac{1}{2}\cdot  {\Delta}_{_{f,M}}\bigg(\frac{\varepsilon}{2+4\, l_{[L,M,T]}}\bigg)\,,
\end{equation}
there exists a set of piecewise constant functions
\begin{equation}
\label{G-set-def}
\mathcal{G}~\doteq~\{g_1,\dots,g_p\},
\end{equation}
with 
\begin{equation}
\label{cover-bound-nc-1}
p~\leq~\Bigg\lfloor2^{\Big(\frac{\Gamma_{\!1}^+}{\small\strut 2\,\varepsilon'}\Big)}\Bigg\rfloor+1\,,
\end{equation}
that enjoy the following properties:
\begin{itemize}
\item[(i)]  For any $i=1,\dots, p$, one has 
\[
Supp(g_i)~\subseteq~\big[\!- l_{[L,M,T]},\, l_{[L,M,T]}\big],
\qquad\quad Im(g_i) \subseteq \big[0, f'_M\big]\,,
\]
and
\begin{equation*}
\hspace{-20pt}
g_i(x)~=~g_i(x_\nu)
\qquad\quad\forall~x\in[x_\nu,\,x_{\nu+1})\,,
\qquad \nu\in\{0,1,\dots,N\!-\!1\},
\end{equation*}
with
\begin{equation}
\label{hyp-N-1}
x_{\nu}~\doteq~ -l_{[L,M,T]}+ {2\,  l_{[L,M,T]}\over N}\cdot\nu\,,
\qquad
\nu\in \{0,1,..., N\}\,,
\qquad\quad
N\geq   \bigg\lfloor\frac{8\, l_{[L,M,T]}\cdot V_{[L,M,T]}}{\varepsilon'}\bigg\rfloor,
\end{equation}
where  $f'_M$, $l_{[L,M,T]}$,  $V_{[L,M,T]}$ are  the constants defined
in~~\eqref{der-flux-bound}, \eqref{LT-def}, \eqref{VLT-def}, respectively.

\item[(ii)]
\begin{equation}
\label{prop-ii-gi}
\mathcal{L}_{[L,M,T]}~\subset~\bigcup_{i=1}^p B\big(g_i,\,\ve'\,\big)\,.
\end{equation}
\end{itemize}
For every $g_i$, $i=1,\dots, p$, and in connection with any $N$-tuple $\iota=(\iota_0,\dots,\iota_{N-1})\in\{-1,1\}^N$,
we now define a piecewise constant map $\mathcal{T}^N_\iota(g_i)$ as follows.
Let $f'_{-1}, f'_1$ denote the restrictions of $f'$ to the semilines $(-\infty, 0]$ and $[0,+\infty)$,
respectively. Then,  set
\begin{equation}
\label{T-iota-def}
\mathcal{T}^N_\iota(g_i)(x)~\doteq~ 
\begin{cases}
\big( f'_{\iota_\nu}\big)^{-1}\big(g_i(x_\nu)\big)\ \quad\forall~x\in [x_\nu, x_{\nu+1})\quad&\text{if}\qquad x\in\big[\!-\!l_{[L,M,T]},\,l_{[L,M,T]}\big)\,,
\\
\noalign{\medskip}
\ \  0\qquad&\text{otherwise.}
\end{cases}
\end{equation}
Next, given any $u\in S_T({\mathcal C}_{[L,M]})$, by~\eqref{derf-sol-incl1} and \eqref{prop-ii-gi}
let $g_i$ be a map satisfying property (i) such that
\begin{equation}
\label{gi-est-1}
\big\|f'\circ u-g_i\big\|_{{\bf L}^1(\R)}~<~\ve'\,.
\end{equation}
Observe that, applying Lemma~\ref{L2} and Lemma~\ref{nc-flux-prop-3},
and choosing
\begin{equation}
\label{N-est-1}
N~\geq~\left\lfloor\frac{2\,l_{[L,M,T]}\cdot C_1\,\Big(1+\frac{L}{T}\Big)}{\ve'}\right\rfloor+1\,,
\end{equation}
we find
\begin{equation}
\label{inv-der-f-lip-est-nc4}
\Big\|f'\circ u-f'\circ \mathcal{P}^N(u)\Big\|_{{\bf L}^1(\R)}
~\leq~\frac{2\,l_{[L,M,T]}\cdot  C_1\,\Big(1+\frac{L}{T}\Big)}{N}
~\leq~ \ve'\,.
\end{equation}
Hence, \eqref{gi-est-1}, \eqref{inv-der-f-lip-est-nc4} imply that, for any 
\begin{equation}
\label{N-est-2}
N~\geq~\max\left\{\Bigg\lfloor\frac{8\, l_{[L,M,T]}\cdot V_{[L,M,T]}}{\varepsilon'}\Bigg\rfloor,\ \left\lfloor\frac{4\,l_{[L,M,T]}\cdot C_1\,\Big(1+\frac{L}{T}\Big)}{\ve'}\right\rfloor\right\}\,,
\end{equation}
and $\ve'\leq 2\,l_{[L,M,T]}\cdot C_1$,
one has
\begin{equation}
\label{gi-est-2}
\Big\|f'\circ \mathcal{P}^N(u) -g_i\Big\|_{{\bf L}^1(\R)}~<~2\,\ve'\,.
\end{equation}
Let $\overline\iota\in\{-1,1\}^N$ be the $N$-tuple defined by
\begin{equation}
\label{bar-iota-def}
\overline\iota_\nu~=~\mathrm{sign}\big(u(x_\nu)\big)\qquad\quad
\nu\in\{0,1,\dots,N\}\,.
\end{equation}
Notice that, by definitions~\eqref{proj-def}, \eqref{T-iota-def},
by Lemma~\ref{L1}
and since $f'(0)=0$
and $g_i$ satisfies the property (i),
one has
\begin{equation}
\label{hyp-inv-P-T}
\begin{gathered}
\big\|\mathcal{P}^N(u)\big\|_{{\bf L}^{\infty}}~\leq~ M, \qquad\quad \big\| \mathcal{T}^N_{\overline \iota}(g_i)\big\|_{{\bf L}^{\infty}}~\leq~ M,
\\
\noalign{\medskip}
\qquad\qquad\qquad
\mathrm{sign}\big( \mathcal{P}^N(u)(x)\big)~=~ \mathrm{sign}\big(\mathcal{T}^N_{\overline \iota}(g_i)(x) \big)
\qquad\forall~x\in\R\,,
\\
\noalign{\medskip}
\qquad
Supp\big(\mathcal{P}^N(u)\big)~\subset~ \big[-l_{[L,M,T]},\, l_{[L,M,T]}\big], \qquad\quad  
Supp\big(\mathcal{T}^N_{\overline \iota}(g_i)\big)~\subset~ \big[-l_{[L,M,T]},\,l_{[L,M,T]}\big].
\end{gathered}
\end{equation}
Therefore, observing that $f'\circ \mathcal{T}^N_{\overline \iota}(g_i)= g_i$,
applying Lemma~\ref{L2}, Lemma~\ref{nc-flux-prop-2}, Lemma~\ref{nc-flux-prop-3} and relying on~\eqref{epsprime-def-2}, \eqref{gi-est-2},
we find that,  for all
\begin{equation}
\label{N-est-3}
N\!\geq\! \max\left\{\!
\left\lfloor \frac{1}{{\Delta}_{_{f,M}}\Big(\frac{\widetilde\kappa_M\, \varepsilon}{8\,l_{[L,M,T]}\cdot \big(2\widetilde\kappa_M+C_1(1+\frac{L}{T})\big)}\Big)}\right\rfloor\!\!,\!
\left\lfloor\frac{16\, l_{[L,M,T]}\cdot V_{[L,M,T]}}{ {\Delta}_{_{f,M}}\left(\frac{\varepsilon}{2+4\, l_{[L,M,T]}}\right)}
\right\rfloor\!\!,\!
\left\lfloor\frac{8\,l_{[L,M,T]}\cdot C_1\big(1\!+\!\frac{L}{T}\big)}{ {\Delta}_{_{f,M}}\left(\frac{\varepsilon}{2+4\, l_{[L,M,T]}}\right)}\right\rfloor\!\right\}\!,
\end{equation}
there holds
\begin{equation}
\label{gi-est-3}
\begin{aligned}
\Big\|\mathcal{P}^N(u)-\mathcal{T}^N_{\overline \iota}(g_i)\Big\|_{{\bf L}^1(\R)}
&\leq~\big(1+2l_{[L,M,T]}\big)\cdot {\Delta}^{\!\strut -1}_{_{f,M}}(2\ve')~\leq~\ve/2\,,
\\
\noalign{\medskip}
\Big\|u-\mathcal{P}^N(u)\Big\|_{{\bf L}^1(\R)}
&\leq \frac{4 \, l_{[L,M,T]}}{\widetilde\kappa_M}
\bigg(2\widetilde\kappa_M+C_1\Big(1+\frac{L}{T}\Big)\bigg)
\cdot {\Delta}^{\!\strut -1}_{_{f,M}}\Big(\frac{1}{N}\Big)\leq~\ve/2\,.
\end{aligned}
\end{equation}
Hence, by~\eqref{gi-est-3},  for any given  $u\in S_T({\mathcal C}_{[L,M]})$ 
and for every $N$ satisfying~\eqref{N-est-3},
we can find
an element $g_i$ of the set $\mathcal{G}$ in~\eqref{G-set-def} and an $N$-tuple 
$\overline\iota\in\{-1,1\}^N$ such that
\begin{equation*}
\Big\|u-\mathcal{T}^N_{\overline \iota}(g_i)\Big\|_{{\bf L}^1(\R)}~\leq~\ve\,,
\end{equation*}
showing that
\begin{equation}
\bigcup_{\ \iota\in\{-1,1\}^N\  } \bigcup_{i=1}^p B\big(\mathcal{T}^N_{\iota}(g_i),\,\ve\big)
\end{equation}
provides an $\ve$-cover of $S_T({\mathcal C}_{[L,M]})$ in ${\bf L}^1$ of cardnality $p\cdot 2^N$ 
By virtue of~\eqref{cover-bound-nc-1}, \eqref{N-est-3}, 
for $\ve>0$ sufficiently small
one has
%
%
\begin{equation}
\label{card-est-3}
p\cdot 2^N~\leq~
2^{\Big(\frac{\Gamma^+}{
{\Delta}_{_{f,M}}(\varepsilon/\gamma^+)}\Big)}
\end{equation}
with  
\begin{equation}
\begin{aligned}
\Gamma^+&\doteq~2\Gamma_1^+ + 
\max\Big\{
2,\, 32\, l_{[L,M,T]}\!\cdot\! V_{[L,M,T]},\, 16\,l_{[L,M,T]}\!\cdot\! C_1\,\big(1+{L}/{T}\big)
\Big\},
\\
\noalign{\medskip}
\gamma^+&\doteq\max\Bigg\{
\frac{8 \, l_{[L,M,T]}}{\widetilde\kappa_M}
\bigg(2\widetilde\kappa_M+C_1\Big(1+\frac{L}{T}\Big)\bigg),
\ 2+4\, l_{[L,M,T]}\Bigg\}\,.
\end{aligned}
\end{equation}
Recalling definitions~\eqref{G1+-def}, \eqref{LT-def}, \eqref{VLT-def} we deduce that
there exists some constant $c>1$ such that
\begin{equation}
\label{card-est-4}
\Gamma^+\leq \eta
\,,\quad \gamma^+\leq \eta
\,,\qquad\quad
\eta\doteq c \bigg(1+L+T+\frac{L^2}{T}\bigg)\,.
\end{equation}
Thus, relying on~\eqref{der-f-est-3}, \eqref{card-est-3}, \eqref{card-est-4},
it follows that
there holds
\begin{equation}
\mathcal{N}_{\varepsilon}\Big(S_T({\mathcal C}_{[L,M]}) \ | \ {\bf L}^{1}(\R)\Big) \leq2^{\big(\frac{\eta\strut}{
{\Delta}_{_{f,M}}(\varepsilon/\eta)}\big)}\leq 2^{\left(\frac{\Gamma_2^+}{\ve^m}\right)}
\end{equation}
with
\begin{equation}
\label{card-est-5}
\Gamma_2^+\doteq  \beta_M\cdot \eta^{m+1}\,.
\end{equation}
Taking the base-2 logarithm in~\eqref{card-est-5} we then derive the estimate~\eqref{uest-nc}.
\qed

\section{Lower compactness estimates} 
\label{sec:lce}
In this section we derive
lower bounds on the $\ve$-entropy in $\L^1$ of
 $S_T(\mathcal{C}_{[L,M]})$ for the class of initial data $\mathcal{C}_{[L,M]}$ in~\eqref{DefCLM},
 when the flux function $f$ satisfies the assumption:
\begin{itemize}
\item[{\bf (A)}] $f:\R\to\R$ is a twice continuously differentiable map such that
\[
f'(0)~=~f''(0)~=~0,\qquad\qquad f''(x)~\neq~0\qquad\forall~x\in\R\backslash \{0\}\,,
\]
\end{itemize}
 which is fulfilled by fluxes satisfying~\eqref{EqZeroSpeed} and either of the assumptions {\bf (C)} or {\bf(NC)}
 stated in the Introduction. Notice that {\bf (A)} in particular implies that $f''$ does not change sign on the
 two semilines $(-\infty, 0)$ and $(0,\infty)$.

 Following the same approach introduced in~\cite{AON1}, we shall derive a proof of~\eqref{lest-c}, \eqref{lest-nc} relying
 on a controllability results for BV functions with one-side bounds on their spatial distributional derivative. 
  Namely, given any $L,h,T>0$, 
  setting
 \begin{equation}
 \label{bh+--def}
b^+_h~\doteq~\frac{1}{2T \cdot\displaystyle{\max_{z\in [0, h]}} |f''(z)|},\qquad\qquad b^-_h ~\doteq~ \frac{1}{2T \cdot\displaystyle{\max_{z\in [-h, 0]}} |f''(z)|}\,,
\end{equation}
consider the sets
  \begin{equation}
  \label{A+--def}
  \begin{aligned}
  \mathcal{A}^+_{[L,\,h]}
  ~\!\doteq\!~
  \begin{cases}
  \!\Big\{v\in {\mathcal C}_{[L/2,\,h]}\cap BV(\R)~\big|~v(x)\geq 0\ \ \forall~x\in\R,\quad
Dv ~\leq b_h^+ \Big\}\quad
&\text{if}\qquad f''(h)>0,
\\
\noalign{\bigskip}
 \!\Big\{v\in {\mathcal C}_{[L/2,\,h]}\cap BV(\R)~\big|~v(x)\geq 0\ \ \forall~x\in\R,\quad
Dv ~\geq -b_h^+ \Big\}\quad
&\text{if}\qquad f''(h)<0,
\end{cases}
\\
\noalign{\medskip}
\mathcal{A}^-_{[L,\,h]}
 ~ \!\doteq\!~
  \begin{cases}
  \!\Big\{v\in {\mathcal C}_{[L/2,\,h]}\cap BV(\R)~\big|~v(x)\leq 0\ \ \forall~x\in\R,\quad
Dv ~\leq b_h^- \Big\}\quad
&\text{if}\qquad f''(-h)>0,
\\
\noalign{\bigskip}
 \!\Big\{v\in {\mathcal C}_{[L/2,\,h]}\cap BV(\R)~\big|~v(x)\leq 0\ \ \forall~x\in\R,\quad
Dv ~\geq -b_h^- \Big\}\quad
&\text{if}\qquad f''(-h)<0\,.
\end{cases}
  \end{aligned}
  \end{equation}
  %
   Here and throughout the following,  the inequalities of the form $Du\geq b$
 for a function $u\in BV(\R)$,
must  be understood in the sense of measures, i.e. the Radon measure $D u$
satisfies $D u (J)\geq b\cdot|J|$
for every Borel set $J\subset\R$, $|J|$ being the Lebesgue measure of $J$.
We will show that  any element of $\mathcal{A}^\pm_{[L,h]}$ can be obtained as the
value at time $T$ of a
solution of~\eqref{EqCL} with initial data in the set $\mathcal{C}_{[L,h]}$ in~\eqref{DefCLM}.
  To this end, the following lemma provides a-priori bounds on the spatial distributional derivative
of an entropy solutions of~\eqref{EqCL}.
\begin{lemma}\label{reg} 
Let $f:\R\to\R$ be a map satisfying the assumption {\bf (A)} and,
given  $L, h, T>0$, let  $u_0\in\mathcal{C}_{[L,h]}\cap BV(\R)$  be any function
satisfying either of the conditions:
%
\begin{equation}
\label{sign+u0-hyp}
u_0(x)~\geq~ 0\quad\quad\forall~x\in\R\,,
\qquad\qquad
\mathrm{sign}\big(f''(u_0(h))\big)\cdot Du_0\geq - b_h^+\,,
\qquad
\end{equation}
%
\begin{equation}
\label{sign-u0-hyp}
u_0(x)~\leq~ 0\quad\quad\forall~x\in\R\,,
\qquad\qquad
\mathrm{sign}\big(f''(u_0(-h))\big)\cdot Du_0\geq - b_h^-\,,
\qquad
\end{equation}
where $b_h^\pm$ are the constants defined in~\eqref{bh+--def}.
Then,  for every $t\in (0,T]$,
the entropy solution $u(t,\cdot)\doteq S_t u_0$ is continuous on $\R$ and
 one has
\begin{equation}
\label{cd2}
\begin{aligned}
\mathrm{sign}\big(f''(u_0(h))\big)\cdot D u(t,\cdot)~\geq~
- 2 b_h^+
\qquad \ &\text{if}\qquad\ \ \eqref{sign+u0-hyp}\quad \text{holds}\,,
\\
\noalign{\bigskip}
\mathrm{sign}\big(f''(u_0(-h))\big)\cdot D u(t,\cdot)~\geq~
- 2 b_h^-
\qquad \ &\text{if}\qquad\ \ \eqref{sign-u0-hyp}\quad \text{holds}\,.
\end{aligned}
\end{equation}
\end{lemma}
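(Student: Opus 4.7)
The plan is to reduce to a classical method-of-characteristics argument for a flux which, on the range of the solution, is either strictly convex or strictly concave. Assumption \textbf{(A)} ensures that $f''$ has constant non-zero sign on each half-line $(-\infty,0)$ and $(0,+\infty)$. The comparison principle applied to the stationary solution $w\equiv 0$, together with Lemma~\ref{L1}, propagates the sign of the initial datum, so that~\eqref{sign+u0-hyp} implies $0\le S_tu_0\le h$ and~\eqref{sign-u0-hyp} implies $-h\le S_tu_0\le 0$; in both cases $f''$ has a single constant sign on the range of the solution. Using the symmetries $u\mapsto -u$ (which conjugates~\eqref{EqCL} to the equation with flux $v\mapsto -f(-v)$) and $x\mapsto -x$ (which turns $f$ into $-f$), one reduces to the single case $u_0\ge 0$, $Du_0\ge -b_h^+$ in the sense of measures, and $f''>0$ on $(0,h]$.

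In this case I would regularize $u_0$ by setting $u_0^\nu\doteq u_0*\varphi_\nu$ for a standard non-negative mollifier. Writing the hypothesis as $Du_0 + b_h^+\,\mathcal{L}^1 = \mu$ for a non-negative Radon measure $\mu$, one has $(u_0^\nu)'(x) = -b_h^+ + (\mu*\varphi_\nu)(x) \ge -b_h^+$ pointwise, together with $0\le u_0^\nu\le h$. The characteristic map $X_t^\nu(x)\doteq x + t f'(u_0^\nu(x))$ then satisfies
\begin{equation*}
(X_t^\nu)'(x) ~=~ 1 + t\,f''(u_0^\nu(x))\,(u_0^\nu)'(x)
~\ge~ 1 - t\cdot b_h^+\cdot \max_{z\in[0,h]}f''(z)
~\ge~ \tfrac{1}{2}
\qquad\forall\, t\in(0,T],
\end{equation*}
by the very definition of $b_h^+$. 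Hence $X_t^\nu$ is a $C^1$-diffeomorphism of $\R$, no characteristics cross on $[0,T]$, the classical solution $u^\nu(t,\cdot) = u_0^\nu\circ (X_t^\nu)^{-1}$ is well-defined and continuous, and differentiating $u^\nu(t,X_t^\nu(x)) = u_0^\nu(x)$ gives
\begin{equation*}
(\partial_y u^\nu)\big(t, X_t^\nu(x)\big) ~=~ \frac{(u_0^\nu)'(x)}{1+t\,f''(u_0^\nu(x))\,(u_0^\nu)'(x)}\,.
\end{equation*}
This ratio is non-negative when $(u_0^\nu)'(x)\ge 0$; when $(u_0^\nu)'(x)\in [-b_h^+,0)$, the denominator lies in $[1/2,1)$, so the ratio is bounded below by twice the numerator, and hence by $-2b_h^+$.

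The main technical obstacle is to pass this bound to the limit $\nu\to\infty$ for the original datum. The ${\bf L}^1$-contractivity of the semigroup gives $\|u^\nu(t,\cdot) - S_tu_0\|_{{\bf L}^1}\le \|u_0^\nu - u_0\|_{{\bf L}^1} \to 0$. Testing the pointwise inequality $\partial_y u^\nu(t,\cdot) + 2 b_h^+ \ge 0$ against any non-negative $\varphi\in C_c^\infty(\R)$ and sending $\nu\to\infty$ produces
\begin{equation*}
D\big(S_tu_0\big) + 2 b_h^+\,\mathcal{L}^1~\ge~ 0
\end{equation*}
as Radon measures on $\R$, which is the desired bound~\eqref{cd2} in the reduced case. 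In particular the singular part of $D(S_tu_0)$ is non-negative, so every jump of $S_tu_0$ must be upward; but in the present strictly convex regime Ole\v{\i}nik's entropy condition~\eqref{E-cond} rules out upward jumps, so $S_tu_0$ has no jump component and is therefore continuous on $\R$. The three remaining cases of the lemma follow from the symmetries recalled in the first paragraph, completing the argument.
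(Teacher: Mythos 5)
Your argument is correct, but it follows a genuinely different route from the paper's. The paper works directly on the (a priori discontinuous) entropy solution $S_tu_0$ via Dafermos' theory of generalized characteristics: from each pair of continuity points $x_1<x_2$ of $u(t,\cdot)$ it traces the unique genuine backward characteristics to points $y_1<y_2$ at time $0$, uses the non-crossing property together with $Du_0\geq -b_h^+$ to get $x_2-x_1\geq (y_2-y_1)/2$, and so obtains the one-sided bound with the doubled constant; continuity is then deduced exactly as in your last step, by playing the resulting sign of the jumps against the Ole\v{\i}nik E-condition. You instead mollify the datum, observe that by the very choice of $b_h^\pm$ in~\eqref{bh+--def} the characteristic map $X_t^\nu$ has Jacobian $\geq 1/2$ up to time $T$ so that the smooth solution stays classical, read off the pointwise bound $\partial_y u^\nu\geq -2b_h^+$ from the explicit formula, and pass to the limit using the ${\bf L}^1$-contraction. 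The factor $2$ arises from the same source in both proofs (the lower bound $1/2$ on the Jacobian of the characteristic flow), and your singular-part argument for continuity is equivalent to the paper's step~1. What your approach buys is economy: it avoids invoking the structure theory of generalized characteristics for nonconvex fluxes (and the paper's auxiliary reduction to the modified convex flux $\widetilde f$), at the price of two facts you should state explicitly: (i) the classical solution built from characteristics coincides with $S_tu_0^\nu$ (a $C^1$ solution satisfies all Kru\v{z}kov entropy inequalities, so uniqueness applies), which is what legitimates the contraction estimate $\|u^\nu(t,\cdot)-S_tu_0\|_{{\bf L}^1}\leq\|u_0^\nu-u_0\|_{{\bf L}^1}$; and (ii) $S_tu_0\in BV(\R)$ (Lemma~\ref{L1}), so that the distributional inequality obtained in the limit is indeed an inequality between Radon measures and its singular part can be isolated. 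Your symmetry reduction to the single case $u_0\geq 0$, $f''>0$ on $(0,h]$ is legitimate since both $u\mapsto -u$ and $x\mapsto -x$ preserve the Kru\v{z}kov entropy conditions, though the paper simply says the remaining cases are "entirely similar" rather than invoking these conjugations.
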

\begin{proof} 
We shall consider only the case where $u_0$ satisfies condition~\eqref{sign+u0-hyp} and $f''(u_0(h))\geq 0$.
The cases where $f''(u_0(h))\leq 0$ or where condition~\eqref{sign-u0-hyp} holds can be treated in an entirely similar way.

{\bf 1.} Assume that~\eqref{sign+u0-hyp} holds and that $f'$ is increasing on $[0, +\infty)$.
Observe first that, by Lemma~\ref{L1}, we have 
$u(t,\cdot)\in \mathcal{C}_{[l_{[L,h,t]},h]}\cap BV(\R)$,
$u(t,x)\geq 0$ 
for any $x\in\R$, $t>0$, and that~\eqref{cd2} in particular implies
\begin{equation}
\label{cd2-a}
u(t,x+) - u(t,x-)~=~D u(t,\cdot)\big(\{x\}\big)~\geq~0
\qquad\quad\forall~x\in\R\,.
\end{equation}
On the other hand, by the Ole\v{\i}nik E-condition~\cite{Oleinik} we have
\begin{equation*}
f'\big(u(t,x-)\big)~\geq ~f'\big(u(t,x+)\big)
\qquad\quad\forall~x\in\R\,, \ t>0\,,
\end{equation*}
which, in turn, by the  monotonicity of $f'$ on $[0, +\infty)$, implies 
\begin{equation}
\label{cd2-b}
u(t,x-)~\geq~u(t,x+)
\qquad\quad\forall~x\in\R\,, \ t >0\,,
\end{equation}
Then, \eqref{cd2-a}-\eqref{cd2-b} together yield
\begin{equation}
\label{cd2-c}
u(t,x-)~=~u(t,x+)
\qquad\quad\forall~x\in\R\,, \ t\in (0,T]\,,
\end{equation}
proving the continuity of $S_t u_0$ at any $x\in\R$ and for any $t\in(0,T]$.
Therefore, to complete the proof of the Lemma we only have to show that, if the initial data $u_0$ satisfies 
the assumption~\eqref{sign+u0-hyp}, then the corresponding entropy solution satisfies
the inequality in~\eqref{cd2} which, in this case, is equivalent to
\begin{equation}
\label{cd2-d}
u(t,x_2+)-u(t,x_1-)~\geq~ 
-\frac{x_2-x_1}{T \cdot\displaystyle{\max_{z\in [0, h]}} |f''(z)|}
\qquad\quad\forall~x_1<x_2\,.
\end{equation}
Clearly, it will be sufficient to prove that the inequality in~\eqref{cd2-d} holds for any pair of continuity
points $x_1<x_2$ of $u(t,\cdot)$ such that 
\bel{as1}
u(t,x_2)-u(t,x_1)~<~0\,.
\eeq
\medskip

{\bf 2.} Because of~\eqref{sign+u0-hyp}, and since we are assuming that $f''(u_0(h))\geq 0$,
the initial data $u_0$ satisfies the inequality
\begin{equation}
\label{sign+u0-hyp-b}
u_0(z_2+)-u_0(z_1-)~\geq~ 
-\frac{z_2-z_1}{2T \cdot\displaystyle{\max_{z\in [0,h]}} |f''(z)|}
\qquad\quad\forall~z_1<z_2\,.
\end{equation}
Notice that, since $u(t,\cdot)$ takes values in the semiline $[0,+\infty)$ for all $t>0$, 
we may always view $u(t,x)$ as the entropy solution of a conservation law with convex flux.
In fact, if $f$ satisfies the  assumption  {\bf(NC)}, $u(t,x)$ turns out to be the entropy solution
of 
\begin{equation} \label{EqCLNC}
u_{t} + \widetilde f(u)_{x}~=~0,
\end{equation}
with
\begin{equation*}
\widetilde f(u)~\doteq~
\begin{cases}
f(u)\quad &\text{if}\qquad u\geq 0\,,
\\
\noalign{\medskip}
2f(0)-f(u)\quad &\text{if}\qquad u\leq 0\,,
\end{cases}
\end{equation*}
where $\widetilde f$ is a twice continuously differentiable convex map.
Therefore, we may employ the theory of generalized characteristics of Dafermos~\cite{dafermos:gc, Dafermos:Book}
and, for every given point $x$ of continuity of $u(t,\cdot)$, we may trace a unique
backward characteristic starting at $(t,x)$ that is a genuine characteristic.

Then, fix $t\in (0,T]$ and consider two continuity points $x_1<x_2$ of $u(t,\cdot)$ such that~\eqref{as1} holds.
Let $\xi_i(\cdot)$ be the unique backward characteristics emanating from $(t,x_i)$ for $i=1,2$. Since the solution $u(t,\cdot)$ is constant along genuine characteristics, we have 
\begin{equation}
\label{char-eq-1}
x_i~=~y_i+t\cdot f'(u_0(y_i))\qquad\mathrm{with}\qquad y_i~=~\xi_i(0)\,,
\end{equation}
and 
\begin{equation}
\label{char-eq-2}
u(t,x_i)~=~u_0(y_i)\qquad\mathrm{for}~~~i=1,2\,.
\end{equation}
Notice that  \eqref{as1}, \eqref{char-eq-1}, \eqref{char-eq-2}
and the   monotonicity of $f'$ on $[0, +\infty)$, together imply 
\begin{equation}
\label{char-neq-1}
\begin{gathered}
y_2-y_1  = x_2-x_1 - t \cdot \big( f'(u(t, x_2))- f'(u(t, x_1))\big)~>~0\,,
\\
\noalign{\smallskip}
f'(u_0(y_2))~<~ f'(u_0(y_1))\,.
\end{gathered}
\end{equation}
Thus, relying on  \eqref{sign+u0-hyp-b}, \eqref{char-eq-1}, \eqref{char-eq-2}, \eqref{char-neq-1}
we find
\bel{cd3}
u(t,x_2)-u(t,x_1)~=~u_0(y_2)-u_0(y_1)~\geq~-\frac{y_2-y_1}{2T \cdot\displaystyle{\max_{z\in[0, h]}} |f''(z)|}
\eeq
and
\begin{eqnarray}
\label{char-eq-3}
x_2-x_1&=&y_2-y_1+t\cdot \big( f'(u_0(y_2))- f'(u_0(y_1))\big)\cr\cr
&\geq &y_2-y_1+ t\cdot \Big(\,\displaystyle{\max_{ z\in [0, h]}} |f''(z)|\Big)
\cdot \big(u_0(y_2)-u_0(y_1)\big)\cr\cr
&\geq&y_2-y_1-t\cdot \Big(\,\max_{z\in [0,h]}~|f''(z)|\Big)\cdot {y_2-y_1\over 2T\cdot \displaystyle{\max_{z\in [0,h]}}~f''(z)}\cr\cr
&\geq &{y_2-y_1\over 2}\,.
\end{eqnarray}
Combining~\eqref{cd3}, \eqref{char-eq-3}, we obtain
\[
u(t,x_2)-u(t,x_1)~\geq~- \frac{x_2-x_1}{T \cdot\displaystyle{\max_{z\in [0,h]}} |f''(z)|}
\]
completing the proof of (\ref{cd2-d})  for any pair of continuity
points $x_1<x_2$ of $u(t,\cdot)$
and thus concluding the proof of the Lemma.
\end{proof}
Relying on Lemma \ref{reg}, we obtain the following controllability result.
\begin{lemma}\label{inc2}
Let $f:\R\to\R$ be a map satisfying the assumption {\bf (A)} and,
given  $L, h, T>0$, let  $\mathcal{C}_{[L,h]}$, $\mathcal{A}^\pm_{[L,h]}$ be the sets defined in~\eqref{DefCLM}, \eqref{A+--def}, respectively. Then, there holds
\bel{inc}
\mathcal{A}^+_{[L,\,h]}\,\bigcup\,\mathcal{A}^-_{[L,\,h]}~\subseteq~S_T(\mathcal{C}_{[L,h]})
\eeq
for all 
$h>0$ 
such that 
\begin{equation}
\label{h-controll-bound}
f'_h\doteq \max_{|z|\leq h}~|f'(z)|~\leq~{L\over 2T}\,.
\end{equation}
\end{lemma}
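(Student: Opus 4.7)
The plan is to show that any $\bar u \in \mathcal{A}^+_{[L,h]} \cup \mathcal{A}^-_{[L,h]}$ is the value at time $T$ of an entropy solution of~\eqref{EqCL} issued from some initial datum in $\mathcal{C}_{[L,h]}$, by an explicit backward-in-time construction along characteristics. I shall describe the representative case $\bar u \in \mathcal{A}^+_{[L,h]}$ with $f''(h)>0$; the remaining three sign combinations are entirely similar, the nonconvex case {\bf(NC)} being reduced to a convex one on the relevant half-line by the flux modification $\widetilde f$ already used in the proof of Lemma~\ref{reg}.

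The first step is to introduce the backward characteristic map
\begin{equation*}
\Phi(x)~\doteq~x - T\,f'(\bar u(x))\qquad\forall~x\in\R\,,
\end{equation*}
and to show that $\Phi$ is non-decreasing with a uniform ``slope'' at least $1/2$. At continuity points of $\bar u$ this follows from the formal computation $D\Phi = 1 - T f''(\bar u) D\bar u \geq 1 - T\cdot\max_{z\in[0,h]}\!f''(z)\cdot b_h^+ = 1/2$, using $f''\geq 0$ on $[0,h]$ (a consequence of {\bf (A)} combined with $f''(h)>0$) and the measure hypothesis $D\bar u \leq b_h^+$. At a jump point $x$ of $\bar u$, the same measure bound forces $\bar u(x+)\leq \bar u(x-)$, and since $f'$ is increasing on $[0,h]$ this makes $\Phi$ jump upward. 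Hence $\Phi$ is a monotone non-decreasing $BV$ map, injective on the continuity set of $\bar u$ with slope at least $1/2$, and leaving a gap $[\Phi(x-),\Phi(x+)]$ at each jump.

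The candidate initial datum is then defined by $u_0(\Phi(x))\doteq\bar u(x)$ at each continuity point $x$, and on each gap associated with a jump of $\bar u$ at $x$ by the compression wave
\begin{equation*}
u_0(y)~\doteq~(f'|_{[0,+\infty)})^{-1}\!\Big(\frac{x-y}{T}\Big)\qquad\forall\,y\in[\Phi(x-),\Phi(x+)]\,,
\end{equation*}
which interpolates monotonically between $\bar u(x-)$ and $\bar u(x+)$, staying in $[0,h]$. By construction $u_0\geq 0$ and $\|u_0\|_{{\bf L}^\infty}\leq h$. The support bound is obtained by noting that $|\Phi(x)-x| \leq T f'_h \leq L/2$ thanks to~\eqref{h-controll-bound}, so $\Phi([-L/2,L/2])\subset[-L,L]$, while outside $[-L/2,L/2]$ one has $\bar u\equiv 0$, $\Phi=\mathrm{id}$, and hence $u_0\equiv 0$. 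Thus $u_0 \in \mathcal{C}_{[L,h]}$.

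The remaining and most delicate step is the verification that $S_T u_0 = \bar u$. The forward characteristic issued from $(0,y)$ with speed $f'(u_0(y))$ reaches $(T,x)$ for each continuity point $x$ (with $y=\Phi(x)$) carrying the value $\bar u(x)$; on each gap all characteristics converge to the same point $(T,x)$, producing a shock whose left and right states match $\bar u(x-),\bar u(x+)$ with the correct Rankine--Hugoniot speed and with Ole\v{\i}nik admissibility guaranteed by the monotonicity of $f'$ on $[0,h]$. The monotonicity of $\Phi$ prevents any crossing of characteristics for $t<T$, so the classical method of characteristics — or equivalently the Lax--Ole\v{\i}nik representation applied to the convex flux $\widetilde f$ — yields precisely the desired entropy solution. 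The main technical obstacle is to make this scheme rigorous in the presence of a possibly dense jump set of $\bar u$ and the associated countable family of compression waves; I expect to handle it either by approximating $\bar u$ by piecewise constant $BV$ profiles, for which the construction is completely explicit and front tracking applies, and then passing to the limit via the ${\bf L}^1$-contractivity of the semigroup, or by a direct appeal to Dafermos' theory of generalized characteristics invoked in the proof of Lemma~\ref{reg}.
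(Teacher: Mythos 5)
Your construction is a genuine alternative to the paper's argument, and its skeleton is sound: the map $\Phi(x)=x-Tf'(\bar u(x))$ is indeed increasing with $D\Phi\geq 1/2$ precisely because $b_h^+$ in~\eqref{bh+--def} was calibrated so that $T\cdot\max_{[0,h]}|f''|\cdot b_h^+=1/2$, the one-sided bound $D\bar u\leq b_h^+$ forces all jumps of $\bar u$ to be downward (hence Ole\v{\i}nik-admissible for the convex branch of the flux), and the compression-wave filling of the gaps produces a continuous datum $u_0$ whose support lands in $[-L,L]$ thanks to~\eqref{h-controll-bound}. The paper proceeds differently: given $v\in\mathcal{A}^+_{[L,h]}$ it reflects in space, $w_0(x)\doteq v(-x)$, notes that the reflection converts the condition $Dv\leq b_h^+$ into the hypothesis~\eqref{sign+u0-hyp} of Lemma~\ref{reg}, runs the semigroup \emph{forward} to $w(t,\cdot)=S_tw_0$, invokes Lemma~\ref{reg} to conclude that $w(t,\cdot)$ is continuous for all $t\in(0,T]$ (no shock ever forms), and then declares $u(t,x)\doteq w(T-t,-x)$, which is a weak solution of~\eqref{EqCL} that is entropy admissible simply because it is continuous in $x$ for $t<T$; the datum $u_0=u(0,\cdot)=w(T,-\cdot)$ lies in $\mathcal{C}_{[L,h]}$ by Lemma~\ref{L1} and~\eqref{h-controll-bound}. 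The two routes produce the same $u_0$, but the paper's version buys rigor for free: all the analytic content is delegated to the already-established forward well-posedness theory and to Lemma~\ref{reg}, so no characteristic bookkeeping is needed.

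The genuine gap in your write-up is exactly the step you flag yourself: the verification that $S_Tu_0=\bar u$ is left as a plan, not a proof. Neither of your two proposed completions is immediate. The piecewise-constant approximation route is delicate because a na\"{\i}ve sampling of $\bar u$ produces upward jumps, which are inadmissible for the convex branch and hence not attainable by $S_T$; you would need piecewise \emph{linear} approximants with slopes in $(-\infty,b_h^+]$ plus downward jumps, together with a compactness argument for the corresponding data $u_0^n$. The generalized-characteristics route amounts to reproving uniqueness of continuous weak solutions along your flow map. The most economical repair is to observe that your $u(t,x)\doteq u_0$ transported along the (non-crossing, for $t<T$) characteristics is a continuous weak solution on $[0,T)\times\R$, hence coincides with $S_tu_0$ there, and then to pass to the limit $t\to T^-$ using the ${\bf L}^1$-continuity of the semigroup together with dominated convergence ($S_tu_0\to\bar u$ pointwise a.e.\ with uniform bounds on amplitude and support). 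Equivalently, you can simply adopt the paper's reflection-and-time-reversal device, whose entire purpose is to make "run the equation backward from $\bar u$" rigorous without ever touching the jump set.
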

\begin{proof}
We will only show that, for $h$ satisfying~\eqref{h-controll-bound},
assuming $f''(h)>0$
one has  
\begin{equation}
\label{A+incl}
\mathcal{A}^+_{[L,\,h]}~\subseteq~ S_T(\mathcal{C}_{[L,h]}).
\end{equation}
The proof of~\eqref{A+incl}
when $f''(h)<0$ and the
proof of 
$\mathcal{A}^-_{[L,\,h]}\subseteq S_T(\mathcal{C}_{[L,h]})$ are entirely similar.
Then, given an arbitrary function 
\begin{equation}
\label{v}
v~\in~ \mathcal{A}^+_{[L,\,h]},
\end{equation}
we will determine an element $u_0\in \mathcal{C}_{[L,h]}$
such that
\begin{equation}
S_T u_0~=~v\,,
\end{equation}
thus proving~\eqref{A+incl}.
The function $u_0$ will be obtained by 
an entropy admissible solution of~\eqref{EqCL}
backward constructed in time, which
starts at time $T$ with the value $v$. Namely,  set
\begin{equation}
\label{w0-def}
w_0(x)~\doteq~v(-x)\qquad\forall~x\in\R\,,
\end{equation}
and consider the entropy weak solution $w(t,x)\doteq S_t w_0$
of~\eqref{EqCL} with initial data $w_0$. 
Notice that, letting $l_{[L/2,h,t]}$ be the constant defined  in~\eqref{LT-def},
because of~\eqref{h-controll-bound} there holds
\begin{equation}
\label{supp-size-bound}
l_{[L/2,h,t]}~ = ~ L/2 + t\cdot f'_h~\leq~ L\qquad \forall~t\in [0,T]\,.
\end{equation}
Moreover, observe that, by~\eqref{A+--def}, \eqref{v}, \eqref{w0-def}, and since we are assuming that $f''(h)>0$,
we have
\begin{gather}
\label{w0-cond1}
w_0~\in~\mathcal{C}_{[L/2,h]}\cap BV(\R)\,,
\\
\noalign{\medskip}
v(x)~\geq~ 0\quad \forall~x\in\R,\qquad
Dw_0~=-~Dv  ~\geq~ -b_h^+\,.
\end{gather}
Therefore, by virtue of Lemma~\ref{L1} we find
\begin{equation}
\label{linf-supp-size-bound-2}
\big\|w(t,\cdot)\big\|_{{\bf L}^{\infty}(\R)}~\leq~ h\,,\qquad\quad
Supp(w(t,\cdot))~\subseteq~ \big[\!-\!L, L\big]
\qquad\forall~t\in [0,T]\,,
\end{equation}
and  invoking Lemma~\ref{reg} we deduce that $w(t,\cdot)$
is a continuous map on $\R$ for all $t\in (0,T]$. 
Next, observe that the map $u$ defined by
\begin{equation}
\label{u-def}
u(t,x)~\doteq~ w(T-t, -x)\qquad\quad (t,x)\in [0,T]\times\R\,,
\end{equation}
provides a weak distributional solution of~\eqref{EqCL} which is entropy admissible since it is 
continuous with respect to the space variable $x$ at any time $t < T$.
On the other hand, by~\eqref{w0-def}, \eqref{linf-supp-size-bound-2}, \eqref{u-def}, we have
\begin{equation}
\label{u-0t-prop}
u_0~\doteq~ u(0,\cdot)\in \mathcal{C}_{[L,h]}\,,
\qquad\qquad
S_T u_0~=~ u(T,\cdot)~=~v\,,
\end{equation}
which completes the proof of the Lemma.
\end{proof}
\medskip 
The next lemma shows that, for fluxes with polynomial degeneracy at zero, the constants $b_h^\pm$ in~\eqref{bh+--def}
are of order $\approx \frac{1}{T\cdot s^{m-1}}$.
\begin{lemma}
\label{up-bounds-second-der-f} 
Assume that $f:\R\to\R$ is a  function satisfying condition~\eqref{EqZeroSpeed} and either 
of \eqref{hyp-NC}
or \eqref{hyp-C-pol} conditions.
Then, there exist constants $\overline\alpha, \overline\sigma>0$ such that
\begin{equation}
\label{second-der-bound-2}
\max\Big\{\displaystyle{\max_{z\in [0, s]}} |f''(z)|,\,\displaystyle{\max_{z\in [-s, 0]}} |f''(z)|\Big\}
~\leq~\overline\alpha \cdot s^{m-1}
\qquad\quad\forall~s\in [0,\overline\sigma]\,.
\end{equation}
\end{lemma}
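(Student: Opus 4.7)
The plan is to prove this via a direct Taylor expansion of $f''$ at the origin. First I would observe that in both scenarios the derivatives $f''(0), f'''(0),\dots, f^{(m)}(0)$ all vanish while $f^{(m+1)}(0)\neq 0$: under hypothesis~\eqref{hyp-NC} this is part of the standing assumption for $j=2,\dots,m$; under hypothesis~\eqref{hyp-C-pol} the vanishing is assumed for $j=1,\dots,m$, which in particular covers $j=2,\dots,m$. So the two cases can be treated simultaneously.

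Next I would write the Taylor formula for the smooth function $f''$ at $0$ to order $m-1$. Since its derivatives $(f'')^{(k)}(0)=f^{(k+2)}(0)$ vanish for $k=0,1,\dots,m-2$, one obtains
\begin{equation*}
f''(u)~=~\frac{f^{(m+1)}(0)}{(m-1)!}\,u^{m-1}+R(u),
\end{equation*}
with a remainder satisfying $|R(u)|\leq C\,|u|^{m}$ for $|u|$ in a fixed neighborhood of $0$, by the integral form of Taylor's remainder applied on a compact set where $f^{(m+2)}$ is bounded. Consequently
\begin{equation*}
|f''(u)|~\leq~\left(\frac{|f^{(m+1)}(0)|}{(m-1)!}+C\,|u|\right)|u|^{m-1}.
\end{equation*}

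Choosing $\overline\sigma>0$ small enough so that $C\,\overline\sigma\leq |f^{(m+1)}(0)|/(m-1)!$, I can set
\begin{equation*}
\overline\alpha~\doteq~\frac{2\,|f^{(m+1)}(0)|}{(m-1)!}
\end{equation*}
and conclude $|f''(u)|\leq \overline\alpha\,|u|^{m-1}$ for all $|u|\leq\overline\sigma$. Taking the maximum of $|f''|$ over $[0,s]$ and $[-s,0]$ for $s\in[0,\overline\sigma]$ then yields the required bound, because the bounding function $\overline\alpha\cdot|u|^{m-1}$ is monotone increasing in $|u|$ and hence is attained at the endpoint $|u|=s$.

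There is no substantive obstacle here; the statement is essentially a restatement of Taylor's theorem once one notices that the assumptions force the vanishing of all derivatives of $f''$ up to order $m-2$ at the origin. The only bookkeeping care needed is to unify the two hypotheses \eqref{hyp-NC} and \eqref{hyp-C-pol} into the common condition $f^{(j)}(0)=0$ for $j=2,\dots,m$, which is precisely the content of the first step above.
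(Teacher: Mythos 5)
Your proof is correct and follows essentially the same route as the paper: a Taylor expansion of $f''$ at the origin, using that the hypotheses force $(f'')^{(k)}(0)=0$ for $k=0,\dots,m-2$, so that $f''(u)=u^{m-1}\big(f^{(m+1)}(0)/(m-1)!+o(1)\big)$ and the bound follows for $\overline\sigma$ small. Your version is slightly more explicit about the choice of $\overline\alpha$ and the monotonicity step, but there is no substantive difference.
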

\begin{proof}
By writing the 
Taylor expansion of $f''$ at zero we find
\begin{equation}
\label{der-3-f-taylor}
f''(u)~=~ 
u^{m-1}\cdot \Bigg( \frac{f^{(m+1)}(0)}{(m-1)!}+o(1)\Bigg)
\end{equation}
where $o(1)$ denotes a function converging to zero when $u\to 0$.
Since we are assuming that $f^{(m+1)}(0)\neq 0$, the estimate~\eqref{second-der-bound-2}
immediately follows from~\eqref{der-3-f-taylor}
taking $\overline\sigma>0$ sufficiently small.
\end{proof}
\medskip

{\bf Proof of lower bounds~\eqref{lest-c}, \eqref{lest-nc} of Theorems~\ref{ThmLUBC}-\ref{ThmLUBNC}}\\

Given any $L, h>0$, recalling definitions~\eqref{Bpm}, \eqref{A+--def}, we have
\begin{equation}
\label{A-B-incl}
\begin{aligned}
\mathcal{A}^+_{[L,\,h]} &\supseteq~ \mathcal{B}_{\big[\!\frac{L}{2},\, \frac{h}{2},\, \leq b_h^+\big]} + \frac{h}{2}\cdot 
\chi_{\big[-\frac{L}{2},\,\frac{L}{2}\big]} 
\qquad\quad \text{if}\qquad\quad f''(h)>0\,,
\\
\noalign{\medskip}
 \mathcal{A}^+_{[L,\,h]} &\supseteq~ \mathcal{B}_{\big[\!\frac{L}{2},\, \frac{h}{2},\, \geq -b_h^+\big]}  + \frac{h}{2}\cdot \chi_{\big[-\frac{L}{2},\,\frac{L}{2}\big]} 
\quad\quad\ \text{if}\qquad\quad f''(h)<0\,,
 \\
\noalign{\medskip}
 \mathcal{A}^-_{[L,\,h]} &\supseteq~ \mathcal{B}_{\big[\!\frac{L}{2},\, \frac{h}{2},\, \leq b_h^+\big]} - \frac{h}{2}\cdot 
 \chi_{\big[-\frac{L}{2},\,\frac{L}{2}\big]} 
\qquad\quad \text{if}\qquad\quad f''(-h)>0\,,
\\
\noalign{\medskip}
 \mathcal{A}^-_{[L,\,h]} &\supseteq~ \mathcal{B}_{\big[\!\frac{L}{2},\, \frac{h}{2},\, \geq -b_h^+\big]}  - \frac{h}{2}\cdot \chi_{\big[-\frac{L}{2},\,\frac{L}{2}\big]} 
\quad\quad\ \text{if}\qquad\quad f''(-h)<0\,.
\end{aligned}
\end{equation}
To fix the ideas, assume now that 
$$f''(h)~>~0,\qquad\qquad f''(-h)~<~0.$$ 
The cases where
$f''(h)>0$, $f''(-h)>0$; $f''(h)<0$, $f''(-h)>0$; or $f''(h)<0$, $f''(-h)<0$, can be treated
in an entirely similar way.
Then, by virtue of Lemma~\ref{inc2}
and relying on~\eqref{A-B-incl}, we find
\begin{equation}
\label{entr-lbound-1}
\begin{aligned}
&\mathcal{H}_{\varepsilon}\Big(S_T({\mathcal C}_{[L,M]}) \ | \ {\bf L}^{1}(\R)\Big)~\geq~
\max\bigg\{
\mathcal{H}_{\varepsilon}\Big(\mathcal{A}^+_{[L,\,h]}  \ | \ {\bf L}^{1}(\R)\Big),\,
\mathcal{H}_{\varepsilon}\Big(\mathcal{A}^-_{[L,\,h]}  \ | \ {\bf L}^{1}(\R)\Big)
\bigg\}
\\
\noalign{\medskip}
&\geq~
\max\bigg\{
\mathcal{H}_{\varepsilon}\Big( \mathcal{B}_{\big[\!\frac{L}{2},\, \frac{h}{2},\, \leq b_h^+\big]} 
+ \frac{h}{2}\!\cdot\! \chi_{\big[-\frac{L}{2},\,\frac{L}{2}\big]}
 \ | \ {\bf L}^{1}(\R)\Big),\,
\mathcal{H}_{\varepsilon}\Big(\mathcal{B}_{\big[\!\frac{L}{2},\, \frac{h}{2},\, \geq -b_h^+\big]} 
- \frac{h}{2}\!\cdot\! \chi_{\big[-\frac{L}{2},\,\frac{L}{2}\big]} 
\ | \ {\bf L}^{1}(\R)\Big)
\bigg\}
\\
\noalign{\medskip}
& =~
\max\bigg\{
\mathcal{H}_{\varepsilon}\Big( \mathcal{B}_{\big[\!\frac{L}{2},\, \frac{h}{2},\, \leq b_h^+\big]} 
 \ | \ {\bf L}^{1}(\R)\Big),\,
\mathcal{H}_{\varepsilon}\Big(\mathcal{B}_{\big[\!\frac{L}{2},\, \frac{h}{2},\, \geq -b_h^+\big]} 
\ | \ {\bf L}^{1}(\R)\Big)
\bigg\}
\end{aligned}
\end{equation}
for all $h>0$ satisfying~\eqref{h-controll-bound}.
Hence, invoking Lemma~\ref{lowb} and because of~\eqref{bh+--def}, we derive from~\eqref{entr-lbound-1}
the estimate
\begin{equation}
\label{entr-lbound-2}
\mathcal{H}_{\varepsilon}\Big(S_T({\mathcal C}_{[L,M]}) \ | \ {\bf L}^{1}(\R)\Big) \geq
\frac{L^2}{108\, \ln 2\cdot T} \cdot \frac{1}{\min \Big\{\displaystyle{\max_{z\in [0, h]}} |f''(z)|,\,\displaystyle{\max_{z\in [-h, 0]}} |f''(z)|\Big\}}
\cdot \frac{1}{\varepsilon}
\end{equation}
for any $h \geq \frac{6\varepsilon}{L}$ such that~\eqref{h-controll-bound} holds.
Choosing $h=\frac{6\varepsilon}{L}$, we recover from~\eqref{entr-lbound-2} the estimate~\eqref{lest-c}
for all $\varepsilon>0$ such that 
\begin{equation}
\label{h-controll-bound-2}
\max_{|z|\leq \frac{6\varepsilon}{L}}~|f'(z)|~\leq~{L\over 2T}\,.
\end{equation}
On the other hand, in the case where $f$ is a nonconvex flux satisfying
conditions~\eqref{EqZeroSpeed}, \eqref{hyp-NC}, applying Lemma~\ref{up-bounds-second-der-f}
and taking $h=\frac{6\varepsilon}{L}$,
we derive from~\eqref{entr-lbound-2} the estimate
\begin{equation}
\label{entr-lbound-3}
\mathcal{H}_{\varepsilon}\Big(S_T({\mathcal C}_{[L,M]}) \ | \ {\bf L}^{1}(\R)\Big)~\geq~
\frac{L^{m+1}}{108\, \ln 2\cdot 6^{m-1}\cdot\overline\alpha \cdot T} \cdot \frac{1}{\varepsilon^m}
\end{equation}
which proves~\eqref{lest-nc}. 

\qed

\bigskip


\begin{thebibliography}{99}

\bibitem{APN1} F. Ancona, P. Cannarsa, K. T. Nguyen, Quantitative compactness estimates for Hamilton-Jacobi equations,  Arch. Ration. Mech. Anal. 219 (2016), no. 2, 793--828.

\bibitem{APN2} F. Ancona, P. Cannarsa, K. T. Nguyen, Compactness estimates for Hamilton-Jacobi equations depending on space,  Bull. Inst. Math. Acad. Sin. (N.S.) 11 (2016), no. 1, 63--113.

\bibitem{AON1} F. Ancona, O. Glass and K. T. Nguyen, Lower compactness estimates for scalar balance laws, Comm. Pure Appl. Math. 65 (2012), no. 9, 1303-1329.

\bibitem{AON2} F. Ancona, O. Glass and K. T. Nguyen, On compactness estimates for hyperbolic systems of conservation laws,  Ann. Inst. H. Poincar\'e Anal. Non Lin�aire 32 (2015), no. 6, 1229�1257.

\bibitem{AON3} F. Ancona, O. Glass and K. T. Nguyen, On quantitative compactness estimates for hyperbolic conservation laws, Hyperbolic problems: theory, numerics and applications.
Proceedings of the 14th International Conference on Hyperbolic Problems (HYP2012), AIMS, Springfield, MO, 2014, 249-257.



\bibitem{BKP} P. L. Bartlett, S. R. Kulkarni, S. E. Posner , Covering numbers for real-valued function classes. IEEE Trans. Inform. Theory 43 (1997), no. 5, 1721--1724.


\bibitem{KSC83} K.-S. Cheng, The space BV is not enough for hyperbolic conservation laws, J.
Math. Anal. Appl. 91 (1983), 559-561.


\bibitem{KSC86} K.-S. Cheng, A regularity theorem for a nonconvex scalar conservation law. J. Differential Equations
61 (1986), no. 1, pp. 79�127.
%
\bibitem{Bressan} A. Bressan, Hyperbolic systems of conservation laws, Oxford University Press, Oxford, 2000. xii+250 pp.
%

\bibitem{dafermos:gc} C. M. Dafermos, Generalized characteristics and the structure of solutions of hyperbolic conservation laws, Indiana Univ. Math. J. 26 (1977), no. 6, 1097--1119. 
%
%
\bibitem{Dafermos:Book} C. M. Dafermos, {\it Hyperbolic conservation laws in continuum physics,} Grundlehren Math. Wissenschaften Series, Vol. {325}.  Fourth ed. Berlin: Springer-Verlag, 2016.
%
%
\bibitem{DLG} C. De Lellis, F. Golse, A quantitative compactness estimate for scalar conservation laws, Comm. Pure Appl. Math. 58 (2005), no. 7, 989--998. 
%
\bibitem{G08} O. Glass, An extension of Ole\v{\i}nik's inequality for general 1D scalar conservation laws, J. Hyperbolic Differ. Equ. 5 (2008), no. 1, 113--165. 
%
\bibitem{GL} J. Glimm, P. D. Lax, Decay of solutions of nonlinear hyperbolic conservation laws, Mem. Amer. Math. Soc., 101 (1970).
%
\bibitem{GG} P. Goatin, L. Gosse, Decay of positive waves for $n \times n$ hyperbolic systems of balance laws,
Proc. Amer. Math. Soc. 132 (2004), no. 6, 1627--1637.
%
\bibitem{Hoeffding} W. Hoeffding, Probability inequalities for sums of bounded random variables, J. Amer. Statist. Assoc. 58 (1963), 13--30. 
%
\bibitem{JS}
H.K. Jenssen, C. Sinestrari, On the spreading of characteristics for non-convex conservation laws,
Proc. Roy. Soc. Edinburgh Sect. A 131 (2001), no. 4, 909--925. 
%
\bibitem{Kruzkov} S.~N. Kruvzkov, First order quasilinear equations with several independent variables. Mat. Sb. (N.S.) 81 (123) 1970, 228--255. (Russian) English translation in Math. USSR Sbornik Vol. 10 (1970), No. 2, 217--243.
%
\bibitem{Lax54} P. D. Lax, Weak solutions of nonlinear hyperbolic equations and their numerical computation, Comm. Pure Appl. Math. 7 (1954), 159--193.
%

\bibitem{lax57}
P. D. Lax, Hyperbolic systems of conservation laws II, 
Comm. on Pure and Applied Math., \textbf{10} (1957), 537-566.
%
\bibitem{Lax78} P. D. Lax, Accuracy and resolution in the computation of solutions of linear and nonlinear equations. Recent advances in numerical analysis (Proc. Sympos., Math. Res. Center, Univ. Wisconsin, Madison, Wis., 1978). Publ. Math. Res. Center Univ. Wisconsin, 107--117. Academic Press, New York, 1978.
%
\bibitem{Lax02} P. D. Lax, Course on hyperbolic systems of conservation laws. XXVII Scuola Estiva di Fis. Mat., Ravello, 2002.
%
\bibitem{Marc} E. Marconi, Regularity estimates for scalar conservation laws in one space dimension, 
preprint 2017, arXiv:1708.07687 [math.AP].
%
\bibitem{Oleinik} O. A. Ole\v{\i}nik, Discontinuous solutions of non-linear differential equations, Uspehi Mat. Nauk (N.S.) 12 (1957) no. 3(75), 3--73. (Russian) English translation in Ann. Math. Soc. Trans. Ser. 2 26, 95--172.
%
%
%
\bibitem{Ta} L. Tartar, Compensated compactness and applications to partial differential equations. In: Nonlinear analysis and mechanics:
Heriot-Watt Symposium, Vol. IV, volume 39 of Res. Notes in Math., 136--212. Pitman, Boston, Mass.-London,
1979.

\end{thebibliography}
\end{document}